\documentclass[a4paper,reqno]{amsart}
\usepackage{amsfonts}
\usepackage{amssymb}
\usepackage{times}
\usepackage{xcolor}
\usepackage{xfrac}
 \usepackage{cancel}

\usepackage{tasks}
\usepackage{xcolor}
\usepackage{tkz-graph}
\usepackage{hyperref}
\usepackage{tikz,pgf}
\usepackage{cite}
\usepackage{fullpage}

\def\a{{\alpha}}

\def\b{{\beta}}

\def\d{{\delta}}

\def\g{{\gamma}}

\def\l{{\lambda}}

\def\gg{{\mathfrak g}}
\def\cal{\mathcal }

\makeatletter
\def\Ddots{\mathinner{\mkern1mu\raise\p@
\vbox{\kern7\p@\hbox{.}}\mkern2mu
\raise4\p@\hbox{.}\mkern2mu\raise7\p@\hbox{.}\mkern1mu}}
\makeatother

\theoremstyle{plain}%

\newtheorem{thm}{Theorem}[section]
\newtheorem{cor}[thm]{Corollary}
\newtheorem{lem}[thm]{Lemma}
\newtheorem{defn}[thm]{Definition}
\newtheorem{exa}[thm]{Example}
\newtheorem{prop}[thm]{Proposition}

\newtheorem{rem}[thm]{Remark}
\newtheorem{con}[thm]{Conjecture}

\newfont{\hueca}{msbm10}

\begin{document}


\def\a{{\alpha}}

\def\b{{\beta}}

\def\d{{\delta}}

\def\g{{\gamma}}

\def\l{{\lambda}}

\def\gg{{\mathfrak g}}
\def\cal{\mathcal}


\title{Cohomological rigidity of solvable Lie algebras of maximal rank}

\author{B.A. Omirov}
\address{Bakhrom A. Omirov \newline \indent
Institute for Advanced Study in Mathematics, Harbin Institute of Technology, Harbin 150001 \newline \indent
Suzhou Research Institute, Harbin Institute of Technology, Harbin 215104, Suzhou, China}
\email{omirovb@mail.ru}

\author{G.O.Solijanova}
\address{Gulkhayo O. Solijanova \newline \indent
National Universityof Uzbekistan named after Mirzo Ulugbek, Uzbekistan}
\email{gulhayo.solijonova@mail.ru}

\author{G.Kh. Urazmatov}
\address{Gulmurod Kh. Urazmatov \newline \indent
V.I. Romanovskiy Institute of Mathematics, Uzbekistan Academy of Sciences, Uzbekistan}
\email{gulmurod0405@gmail.com}

\begin{abstract} We study the second cohomology group with coefficients in the adjoint module for a class of solvable Lie algebras $\mathcal{R}_{\mathcal{T}}$ that arise as maximal solvable extensions of nilpotent Lie algebras $\mathcal{N}$ of maximal rank. Under suitable structural assumptions on the root system determined by the action of a maximal torus $\mathcal{T}$ on $\mathcal{N}$, we obtain sufficient conditions for the cohomological rigidity of $\mathcal{R}_{\mathcal{T}}$. Conversely, we identify explicit configurations of roots that force the second cohomology group to be non-trivial, thereby producing broad families of solvable Lie algebras that are not cohomologically rigid. Our results extend the classical sufficient conditions of Leger and Luks, and they provide a unified and computationally effective framework for determining the cohomological rigidity of a wide class of solvable Lie algebras, including several known results.
\end{abstract}

\maketitle
{\bf 2020 MSC:} 17B22, 17B30, 17B56.

{\bf Key-Words:} Solvable Lie algebra, nilpotent Lie algebra, solvable extension, cohomological rigidity, Hoschild-Serre factorization.

\section{Introduction}

The study of the second cohomology group of a Lie algebra with coefficients in the adjoint module lies at the intersection of several central themes in Lie theory, including cohomological algebra, deformation theory, the geometry of the variety of Lie algebra laws, and the extensions\cite{Campoamor-Stursberg, Grunewald,Grunewald2}. 
This cohomological invariant plays a rich and unifying role: it governs the space of infinitesimal deformations of a Lie algebra $\mathfrak{g}$, detects geometric rigidity through the vanishing of cohomology, and provides invariant capable of distinguishing non-isomorphic algebras. Its framework introduced the modern viewpoint on the deformation theory of algebraic structures, establishing the cohomological conditions under which a Lie algebra admits nontrivial perturbations of its bracket \cite{Goze, libroKluwer}. 

The deformation theory introduced by Gerstenhaber \cite{gerstenhaber} for associative algebras was later adapted to the context of Lie algebras in the  foundational work of Nijenhuis and Richardson \cite{Nijenhuis}. Within this framework, the cohomology group \(H^{2}(\mathfrak{g}, \mathfrak{g})\) acquires a natural interpretation as the space governing infinitesimal deformations of the Lie algebra \(\mathfrak{g}\). More precisely, a formal one-parameter deformation of the Lie bracket may be expressed as
$$[x,y]_t = [x,y] + t\,\varphi_1(x,y) + t^2\,\varphi_2(x,y) + \cdots,$$
where the first-order term \(\varphi_1\colon \mathfrak g\times\mathfrak g \to \mathfrak g\) is necessarily a $2$-cocycle in the Chevalley--Eilenberg complex. Two deformations are equivalent if and only if their first-order terms differ by a coboundary. Consequently, the second cohomology group \(H^{2}(\mathfrak g,\mathfrak g)\) parametrizes infinitesimal deformations of \(\mathfrak g\). In particular, if this group vanishes, then every formal deformation is equivalent to the original structure; that is, \(\mathfrak g\) is \emph{cohomologically rigid} \cite{Richardson2}.

A second and equally fundamental connection arises from the geometry of the algebraic variety of Lie structures. For a fixed dimension \(n\), all Lie algebra structures on \(\mathbb{F}^n\) form an affine algebraic variety \(\mathcal{L}_n\), on which the group \(\mathrm{GL}_n\) acts naturally by change of basis. Each orbit corresponds to an isomorphism class, while degenerations of Lie algebras correspond to inclusions between Zariski closures of these orbits. The relationship between algebraic deformations and geometric degenerations was clarified in the work of Lauret and Weimar-Woods \cite{Lauret, Weimar}, where it is shown that formal deformations fit naturally within the larger geometric theory of orbit closures.

Recall that every algebraic variety decomposes uniquely into finitely many irreducible components, and a closures of Zariski-open subset forms irreducible component of the variety. Hence, the classification of $\mathcal{L}_n$ is closely tied to identifying those Lie algebras whose orbits are Zariski-open; these are precisely the \emph{rigid} Lie algebras. By Noetherianity, only finitely many such orbits exist. Identifying rigid Lie algebras is therefore, crucial for understanding the global geometry of $\mathcal{L}_n$.

A striking theorem of Nijenhuis and Richardson establishes that cohomological rigidity implies rigidity \cite{Richardson2}, while the existence of rigid Lie algebras that are not cohomologically rigid demonstrates that the converse fails in general (see \cite{Richardson} and \cite{Ancochea2}). Thus, a cohomologically rigid Lie algebra cannot arise as a degeneration of any non-isomorphic algebra. This result places the group $H^{2}(\mathfrak g,\mathfrak g)$ at the center of degeneration theory: the presence of nontrivial cohomology classes signals potential geometric instability, while their absence guarantees the opposite.

Consequently, many rigid and structurally significant solvable Lie algebras, such as maximal solvable extensions of filiform Lie algebras, the model nilpotent Lie algebras, and various Borel-like solvable algebras are characterized precisely by the vanishing of their adjoint second cohomology group; see, for instance, \cite{Ancochea4, Ancochea7, Leger2}.

A particularly rich class of Lie algebras in which these questions converge consists of maximal solvable extensions of nilpotent Lie algebras of maximal rank. Thanks to the result of \cite{super} on the uniqueness of maximal solvable extensions for certain nilpotent Lie algebras, the study of maximal solvable extensions of nilpotent Lie algebras of maximal rank may be reduced to the analysis of solvable Lie algebras of maximal rank. These algebras admit a canonical description as semidirect products
$$\mathcal{R}_{\mathcal{T}} \;=\; \mathcal{N} \rtimes \mathcal{T},$$
where $\mathcal{N}$ is a nilpotent Lie algebra of maximal rank and $\mathcal{T}$ is a maximal torus acting diagonally on $\mathcal{N}$. From the well-known consequence of Mostow’s result \cite{Mostow} on the conjugacy, by an inner automorphism of $\mathcal{N}$, of any two maximal tori $\mathcal{T}_1$ and $\mathcal{T}_2$, it follows that the corresponding solvable extensions are isomorphic, that is, $\mathcal{R}_{\mathcal{T}_1} \cong \mathcal{R}_{\mathcal{T}_2}.$ A practical method for constructing a maximal torus is provided in \cite{superLeibniz}. Consequently, for a given nilpotent (non-characteristically nilpotent) Lie algebra, the potential difficulty of explicitly constructing the solvable algebra $\mathcal{R}_{\mathcal{T}}$ can be disregarded. A key structural feature of such algebras is that $\mathcal{N}$ decomposes into $\mathcal{T}$-weight spaces, providing a toral grading that plays a central role in their cohomological analysis. The cohomological behavior of these algebras depends intricately on the configuration of roots associated with the $\mathcal{T}$-action, and understanding the interactions among these roots has become a fundamental problem. 

An effective and powerful tool for computing the cohomology of solvable Lie algebras is the Hochschild--Serre factorization theorem. In our setting, it reduces the computation of \(H^{n}(\mathcal{R}_{\mathcal{T}},\mathcal{R}_{\mathcal{T}})\) to the analysis of the 
\(\mathcal{T}\)-invariant cohomology groups \(H^{j}(\mathcal{N},\mathcal{R}_{\mathcal{T}})^{\mathcal{T}}\) for all \(j \leq n\). Since solvable Lie algebras \(\mathcal{R}_{\mathcal{T}}\) of maximal rank are complete (i.e., centerless and admitting no outer derivations) \cite{Meng}, we obtain the following equivalence:
$$
\mathcal{R}_{\mathcal{T}} \text{ is cohomologically rigid}
\quad \Longleftrightarrow \quad
H^{2}(\mathcal{N},\mathcal{R}_{\mathcal{T}})^{\mathcal{T}} = 0.
$$
Consequently, the question of rigidity for these algebras is entirely governed by the  $\mathcal{T}$-invariant component of the second cohomology of the nilradical with coefficients in the solvable algebra.

The classical work of Leger and Luks established sufficient conditions for the vanishing of \(H^{2}(\mathfrak{g},\mathfrak{g})\) for certain solvable Lie algebras, formulated in terms of combinatorial relations among the roots arising in the toral decomposition of $\mathcal{N}$ \cite{Leger2}. Their approach, which exploits explicit linear dependencies among weights, has proven both widely applicable and conceptually clear.

The main objective of the present paper is to develop a general framework for computing the second adjoint cohomology group of solvable Lie algebras of the form $\mathcal{R}_{\mathcal{T}}$, and to derive explicit and conceptually natural sufficient condition for its vanishing. Our methods extend the Leger–Luks'  approach, enabling its application to a wider class of solvable Lie algebras and yielding new families of cohomologically rigid structures. 

To complement our cohomological rigidity results, we also establish sufficient conditions ensuring the non-vanishing of \(H^{2}(\mathfrak{g},\mathfrak{g})\). These sufficient conditions clarify the precise situations in which our vanishing results fail to apply. Moreover, an analysis of the patterns emerging from our computations leads us to formulate a conjecture concerning a lower bound for the dimension  of the second adjoint cohomology group of solvable Lie algebras of maximal rank.

The paper is organized as follows. Section~\ref{sec3} is devoted to establishing sufficient conditions by extending the classical Leger-Luks' sufficient condition for the vanishing of the second cohomology group of solvable Lie algebras of maximal rank.  These conditions  apply to a wide class of algebras, including all maximal solvable extensions of nilpotent Lie algebra of maximal rank  appearing in low-dimensional classifications up to dimension nine \cite{Ancochea7}, the model nilpotent and model filiform Lie algebras \cite{Ancochea4,libroKluwer}. 
In addition, we prove that for solvable Lie algebras $\cal R_{\cal T}$  whose nilradical has nilindex \(s+1\), the vanishing of the cohomology groups
$H^{i}(\mathcal R_{\mathcal T},\mathcal R_{\mathcal T})$ for $0\le i\le s-1$ already implies the triviality of the entire adjoint cohomology. As an application, we obtain a new and unified proof of the rigidity of maximal solvable extensions of a subclass of the model nilpotent Lie algebra introduced in \cite{Ancochea6}.

Section~\ref{sec4} focuses on nilradicals of rank two, generated by two primitive weights. We establish a sufficient condition: the absence of the weights $3\alpha_1+\alpha_2$ and $\alpha_1+3\alpha_2$ (see Theorem~\ref{thm4.8}) for cohomological rigidity of $\cal R_{\cal T}$. This condition applies to a broad class of algebras (including, for instance, Example \ref{exa3.7}) that could not be handled by existing methods.

Section~\ref{sec5} turns to the opposite phenomenon: non-rigidity. We establish easily verifiable sufficient conditions on the weight system (see, Theorem~\ref{thm5.1}) that force $H^2(\mathcal{R}_{\mathcal{T}},\mathcal{R}_{\mathcal{T}}) \neq 0$. These conditions are illustrated by explicit examples given in the section, and a lower bound on the dimension of the second cohomology is given. We also provide an example demonstrating that our conditions, while sufficient, are not necessary. This example clarifies the boundary of the current general techniques and highlights the need for further structural insight.

Throughout this work, algebras and modules are assumed to be finite-dimensional and defined over the field $\mathbb{C}.$

\section{Preliminaries}

In this section we provide definitions and preliminary results that will be used throughout of the paper.

Let $\mathcal{G}$ be a Lie algebra and $\mathcal{M}$ a $\mathcal{G}$-module. The Chevalley–Eilenberg cohomology of $\mathcal{G}$ with coefficients in $\mathcal{M}$ is defined by the cochain complex  
$$C^0(\mathcal{G},\mathcal{M})=\mathcal{M},\qquad 
C^k(\mathcal{G},\mathcal{M})=Hom(\wedge^k\mathcal{G},\mathcal{M})\quad(k\ge 1),$$
with coboundary differential $d^k\colon C^k(\mathcal{G},\mathcal{M})\to C^{k+1}(\mathcal{G},\mathcal{M})$ given by  
\begin{multline*} (d^k f)(x_1,\dots,x_{k+1})=
\sum_{i=1}^{k+1}(-1)^{i+1}x_i\cdot f(x_1,\dots,\widehat{x}_i,\dots,x_{k+1})\\
+\sum_{1\le i<j\le k+1} -1)^{i+j}f([x_i,x_j],x_1,\dots,\widehat{x}_i,\dots,\widehat{x}_j,\dots,x_{k+1})
\end{multline*}
for $f\in C^k(\mathcal{G},\mathcal{M})$ and $x_1,\dots,x_{k+1}\in\mathcal{G}$ \cite{Chevellay}.  

We denote the spaces of $k$-coboundaries and $k$-cocycles, respectively, by  
$$B^k(\mathcal{G},\mathcal{M})=\operatorname{im} d^{k-1},\qquad 
Z^k(\mathcal{G},\mathcal{M})=\operatorname{ker} d^k$$
with the convention $B^0(\mathcal{G},\mathcal{M})=0$.

The identity $d^{k}\circ d^{k-1}=0$ leads that  
$B^k(\mathcal{G},\mathcal{M})\subseteq Z^k(\mathcal{G},\mathcal{M}).$ The $k$-th cohomology group of $\cal G$ with coefficient in $\cal M$ is therefore, 
$$H^k(\mathcal{G},\mathcal{M})=Z^k(\mathcal{G},\mathcal{M})/B^k(\mathcal{G},\mathcal{M}).$$

Although the computation of cohomology groups is generally complicated, the Hochschild--Serre factorization theorem provides a substantial simplification for certain classes of solvable Lie algebras (see Theorem~13 in \cite{Hochschild}). Below, we present a version of the Hochschild--Serre factorization theorem adapted to our purposes.

\begin{thm}\label{Serre} Let $\cal G=\cal N\rtimes \cal Q$ be a solvable Lie algebra, where $\cal Q$ is Abelian and let $\cal M$ be a $\cal G$-module. Suppose that ${ad_{x}}_{|\cal N}\ (x\in \cal Q)$ and the representation of $\cal Q$ on $\cal M$ are diagonal. Then 
cohomology groups $H^{n}(\cal G, \cal M)$ satisfy the following isomorphism: 
$$H^n(\cal G,\cal M)\cong \sum\limits_{i+j=n}H^i(\cal Q,\mathbb{F})\otimes H^j(\cal N,\cal M)^{\cal Q}, \quad n\geq0,$$
where \begin{equation}\label{eq1.1}
H^j(\cal N,\cal M)^{\cal Q}=\{f\in H^j(\cal N,\cal M) \ | \ (t\cdot f)=0,\ t\in\cal Q\}
\end{equation}
is the space of $\cal Q$-invariant cocycles of $\cal N$ with values in $\cal M$ and the invariance being defined by
$$(t\cdot f)(z_1,z_2,\dots,z_b)=t\cdot f(z_1,z_2,\dots,z_b)-\sum\limits_{s=1}^{b}f(z_1,\dots,[t,z_s],\dots,z_j).$$
\end{thm}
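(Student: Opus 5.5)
The plan is to derive this version from the Hochschild--Serre spectral sequence of the ideal $\mathcal N\subset\mathcal G$, whose quotient $\mathcal G/\mathcal N\cong\mathcal Q$ is abelian. This sequence has
$$E_2^{i,j}=H^i\bigl(\mathcal Q,\,H^j(\mathcal N,\mathcal M)\bigr)\Longrightarrow H^{i+j}(\mathcal G,\mathcal M).$$
We will show two things: $E_2^{i,j}\cong H^i(\mathcal Q,\mathbb F)\otimes H^j(\mathcal N,\mathcal M)^{\mathcal Q}$, and the sequence degenerates at $E_2$. Over $\mathbb C$ the filtration then splits, which gives the stated isomorphism. This is essentially Theorem~13 of \cite{Hochschild}, so the argument amounts to checking that its semisimplicity hypothesis holds here.

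\emph{Step 1: semisimplicity of the $\mathcal Q$-action on cochains.} Since $\mathrm{ad}_x|_{\mathcal N}$ and the action of $\mathcal Q$ on $\mathcal M$ are diagonal and commute (as $\mathcal Q$ is abelian), they are simultaneously diagonalizable. Choose a weight basis $\{e_k\}$ of $\mathcal N$ and $\{m_l\}$ of $\mathcal M$. Then the cochain space $C^j(\mathcal N,\mathcal M)$ is spanned by the elementary cochains $e_{k_1}^*\wedge\dots\wedge e_{k_j}^*\otimes m_l$. Under the action displayed in the statement, each of these is a weight vector of weight $\mu_l-\lambda_{k_1}-\dots-\lambda_{k_j}$. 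The differential $d$ commutes with the $\mathcal Q$-action, because $\mathcal N$ is an ideal and $\mathcal M$ is a $\mathcal G$-module. Hence the complex decomposes as $C^\bullet=\bigoplus_\nu C^\bullet_\nu$ into weight subcomplexes, and $H^j(\mathcal N,\mathcal M)=\bigoplus_\nu H^j_\nu$ is a semisimple $\mathcal Q$-module.

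\emph{Step 2: $\mathcal Q$ acts trivially on the nonzero weight parts.} For $t\in\mathcal Q$, the Cartan formula gives $t\cdot f=d(\iota_t f)+\iota_t(df)$. This shows that $\mathcal Q$ acts on $H^\bullet(\mathcal N,\mathcal M)$ through the module structure, and that $H^\bullet(\mathcal G,\mathcal M)$ can be computed from the subcomplex of cochains invariant under $\mathcal Q$. The key observation is that on $C_\nu$ with $\nu\ne0$, some $t$ acts by a nonzero scalar $\nu(t)$. Since $t\cdot$ is null-homotopic on the full $\mathcal G$-complex, the weight-$\nu$ part of the $\mathcal G$-complex is acyclic. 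So only the weight-$0$ part survives.

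\emph{Step 3: the weight-$0$ subcomplex is a tensor product.} Restrict to $\mathcal Q$-invariant $\mathcal G$-cochains. Via $\wedge\mathcal G^*=\wedge\mathcal Q^*\otimes\wedge\mathcal N^*$, we will identify this subcomplex with $\wedge\mathcal Q^*\otimes C^\bullet(\mathcal N,\mathcal M)^{\mathcal Q}$. Under this identification, the differential has the form $d_{\mathcal Q}\otimes1\pm1\otimes d_{\mathcal N}$, plus terms that involve the action of $\mathcal Q$. Those extra terms vanish on invariant cochains. Moreover $d_{\mathcal Q}=0$ on $\wedge\mathcal Q^*$ with trivial coefficients, because $\mathcal Q$ is abelian. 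Künneth then gives $H^n(\mathcal G,\mathcal M)\cong\bigoplus_{i+j=n}\wedge^i\mathcal Q^*\otimes H^j(C^\bullet(\mathcal N,\mathcal M)^{\mathcal Q})$. Here $\wedge^i\mathcal Q^*=H^i(\mathcal Q,\mathbb F)$. Finally, since taking invariants is exact for a semisimple action (Step 1), $H^j(C^{\mathcal Q})=H^j(\mathcal N,\mathcal M)^{\mathcal Q}$, which is exactly the space in \eqref{eq1.1}.

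The main obstacle will be Step 3: checking carefully that the mixed terms of the Chevalley--Eilenberg differential on $\mathcal G$ (those with one argument in $\mathcal Q$) really cancel on $\mathcal Q$-invariant cochains, with the correct signs. The other steps are standard once the simultaneous diagonalization is in place.
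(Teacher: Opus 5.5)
The paper does not prove this statement; it quotes Theorem~13 of Hochschild--Serre. Your proposal is correct and takes a genuinely different route: Steps~1--3 form a self-contained chain-level proof.

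Because of that, the spectral-sequence plan in your first paragraph (identify $E_2$, prove degeneration, split the filtration) is never used. It is better dropped than left as a promise you don't fulfil.

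The step you flagged as the main obstacle does go through. Write $L_s$ for the action $f\mapsto s\cdot f$, and take $t_1,\dots,t_p\in\mathcal Q$, $z_1,\dots,z_q\in\mathcal N$. The terms $(-1)^{k+1}t_k\cdot F(\dots\widehat{t_k}\dots)$ and the bracket terms containing $[t_k,z_m]$ combine; moving $[t_k,z_m]$ into the slot of $z_m$ costs a sign $(-1)^{p+m}$. The result is $(-1)^{k+1}(L_{t_k}F)(t_1,\dots,\widehat{t_k},\dots,t_p,z_1,\dots,z_q)$, since every $[t_k,t_l]$ vanishes. So on $\mathcal Q$-invariant $F$ you get exactly $dF(t_1,\dots,t_p,z_1,\dots,z_q)=(-1)^p\bigl(d_{\mathcal N}F(t_1,\dots,t_p;\,\cdot\,)\bigr)(z_1,\dots,z_q)$.

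In Step~2 the Cartan homotopy $\iota_t$ lives on the $\mathcal G$-complex, not on $C^\bullet(\mathcal N,\mathcal M)$. It preserves $\mathcal Q$-weight spaces because $[L_s,\iota_t]=\iota_{[s,t]}=0$ for $s,t\in\mathcal Q$. Alternatively, note that $L_t$ induces both $0$ and $\nu(t)\,\mathrm{id}$ on the summand $H(C_\nu)$.

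As for what each route buys:
\begin{itemize}
\item The citation covers the more general setting of a complement reductive in $\mathcal G$ acting semisimply on $\mathcal M$.
\item Your argument is elementary and shows exactly where the hypotheses enter: abelianness gives $d_{\mathcal Q}=0$ and removes the $[t_k,t_l]$ terms, while diagonality gives acyclicity of nonzero weights and exactness of invariants.
\item Your argument also justifies identifying invariant cohomology classes with invariant cocycles modulo invariant coboundaries. The paper uses this tacitly later, for instance when it considers only weight-preserving $f$ and the equivariant ansatz $g(e_\alpha)=\mu_\alpha e_\alpha$ in the proof of Theorem~\ref{thm5.1}.
\end{itemize}
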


\begin{rem}\label{rm2.2} Taking into account that 
$H^i(\cal Q,\mathbb{F})=\wedge^i \cal Q$ we conclude that $H^n(\cal G,\cal M)=0$ if and only if $H^j(\cal N,\cal M)^{\cal Q}=0$ for all $0\le j\le n$.
\end{rem}

Since the ground field is $\mathbb{C}$, semisimplicity is equivalent to diagonalizability. Consequently, the semisimplicity of the quotient $(\mathcal N \rtimes \mathcal Q)/\mathcal N$ implies that the induced action of $\mathcal Q$ on $\mathcal N$ is diagonalizable. In particular, $\mathcal Q$ acts on $\mathcal N$ via a family of commuting diagonal operators.

We recall that a nilpotent Lie algebra $\mathcal{N}$ is called of maximal rank if $\operatorname{rank}\cal N=\dim(\mathcal{N}/\mathcal{N}^2).$ 

Let $\mathcal R_{\cal T}: = \mathcal N \rtimes \mathcal T$ be a semidirect product of a nilpotent Lie algebra $\mathcal N$ with a maximal torus $\mathcal T$ acting on $\mathcal N$. Consider the weight-space decomposition of $\mathcal N$ relative to $\mathcal T$,
$$\mathcal N = \bigoplus_{\alpha \in W} \mathcal N_{\alpha}.$$

For $f \in H^{j}(\mathcal N, \mathcal M)^{\mathcal T}$, the condition $\mathcal T \cdot f = 0$ implies that
$f(\mathcal N_{\alpha_{1}}, \dots, \mathcal N_{\alpha_{j}}) \subseteq 
\mathcal N_{\alpha_{1} + \cdots + \alpha_{j}}$ for all $\alpha_{k} \in W.$

Let $\Pi = \{\alpha_{1}, \alpha_{2}, \ldots, \alpha_{n}\}$ denote the set of primitive weights. Then every weight $\mu \in W$ can be uniquely expressed in the form: 
$\mu = \sum\limits_{i=1}^{n} r_{i} \alpha_{i},$ where the coefficients $r_{i}$ are non-negative integers. For such a weight $\mu$, we define its length by
$$\operatorname{length}(\mu) = \sum_{i=1}^{n} r_{i}.$$

Below we adapt the assumptions (i)--(iv) given in \cite{Leger2} to the case of $\mathbb{F}=\mathbb{C}$.

\begin{itemize}
\item[(i)]  $\dim \cal T = \dim(\cal N/\cal N^2) = n$;

\item[(ii)] For $\alpha \in W$, $\dim \cal N_{\alpha} = 1$ and, if $\alpha, \beta, \alpha + \beta$ are all in $W$, $[\cal N_{\alpha}, \cal N_{\beta}] = \cal N_{\alpha + \beta}$. It follows that $[x,e_{\alpha}] = \alpha(x)e_{\alpha}, \ x \in \cal T$; 

\item[(iii)] If $\alpha, \beta, \gamma, \delta, \alpha + \gamma, \beta + \delta$ are all in $W$ with $\alpha, \beta$ primitive and unequal and with $\alpha + \gamma = \beta + \delta$, then there is some $\mu \in W$ such that $\delta = \alpha + \mu$, $\gamma = \beta + \mu$ and at least one of the following is satisfied:
\begin{itemize}
\item[Case 1.] $\alpha + \beta \notin W$;

\item[Case 2.] $\alpha + \beta \in W$ but $\alpha + 2\beta \notin W$ and $\mu = \beta + \nu$ for some $\nu \in W$;

\item[Case 3.] $\alpha + \beta \in W$ but $2\alpha + \beta \notin W$ and $\mu = \alpha + \nu$ for some $\nu \in W$.
\end{itemize}
\end{itemize}

Then one of the main result in \cite{Leger2} related to the assumptions given above states: 

If $\cal R_{\cal T}$ satisfies the assumptions $(i)-(iii)$, then $H^2(\cal R_{\cal T},\cal  R_{\cal T})=0.$ 

Note that the equality $\dim \mathcal{T} = \dim(\mathcal{N} / \mathcal{N}^2)$ for a maximal torus $\mathcal{T}$ of $\mathcal{N}$ characterizes $\mathcal{N}$ as a nilpotent Lie algebra of maximal rank \cite{Meng}. Accordingly, the Lie algebra $\mathcal{R}_{\mathcal{T}}$ is referred to as a solvable Lie algebra of maximal rank.

Since for an arbitrary solvable Lie algebra of the form $\cal N\oplus \cal Q$, where $\cal Q$ is complementary subspace to $\cal N$, we have upper bound estimation $\dim \cal Q \leq \dim(\cal N/\cal N^2)$ (see \cite{Snobl}), then the assumption (i) implies that $\cal R_{\cal T}$ is a maximal extension of $\cal N$. 

By applying the result of \cite{super} on the uniqueness, up to isomorphism, of maximal solvable extensions of a certain class of nilpotent Lie algebras, and observing that condition~(i) ensures $\mathcal{N}$ belongs to this class, we conclude that $\mathcal{R}_{\mathcal{T}}$ is the unique (up to isomorphism) maximal solvable extension of $\mathcal{N}$. Consequently, without loss of generality, we may consider $\mathcal{R}_{\mathcal{T}}$ in place of any maximal solvable extension of $\mathcal{N}$.

Further, we shall assume that an algebra $\cal R_{\cal T}$ satisfies conditions (i)–(ii). In what follows, if a weight $\lambda$ admits a decomposition $\lambda=\alpha + \gamma=\beta + \delta$ that satisfies property (iii), we will say that "$\lambda$ satisfies the Leger-Luks' condition".

Below, we present the principal concept of this work.

\begin{defn} \cite{Richardson2}. 
 A Lie algebra $\cal G$ is called cohomologically rigid if $H^2(\cal G,\cal G)=0$.
\end{defn}

By examining the structure constants of a two-step nilpotent Lie algebra of maximal rank, we obtain the following result.

\begin{prop}
Let $\mathcal N$ be a nilpotent Lie algebra of maximal rank satisfying $\mathcal N^3 = 0$. Then $\mathcal R_{\mathcal T}$ is cohomologically rigid.
\end{prop}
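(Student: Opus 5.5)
The plan is to compute $H^2(\mathcal N,\mathcal R_{\mathcal T})^{\mathcal T}$ directly rather than to invoke the Leger--Luks condition (iii). Indeed, when $\mathcal N^3=0$ every decomposition $\lambda=\alpha+\gamma=\beta+\delta$ with $\alpha\neq\beta$ primitive forces $\gamma=\beta$ and $\delta=\alpha$. Then $\delta=\alpha+\mu$ has no solution with $\mu\in W$, so (iii) is not available. By Theorem~\ref{Serre} and Remark~\ref{rm2.2}, $H^2(\mathcal R_{\mathcal T},\mathcal R_{\mathcal T})=0$ if and only if $H^j(\mathcal N,\mathcal R_{\mathcal T})^{\mathcal T}=0$ for $j=0,1,2$.

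For $j=0,1$ the two groups contribute $H^0$ and $H^1$ of $\mathcal R_{\mathcal T}$, namely its centre and its outer derivations. Both vanish because $\mathcal R_{\mathcal T}$ is complete \cite{Meng}. So everything reduces to showing that every $\mathcal T$-invariant $2$-cocycle of $\mathcal N$ with values in $\mathcal R_{\mathcal T}$ is a coboundary.

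First I describe the weights. Let $e_1,\dots,e_n$ be weight vectors spanning a complement of $\mathcal N^2$, with primitive weights $\alpha_1,\dots,\alpha_n$. Since $\mathcal N^3=0$, the space $\mathcal N^2$ is spanned by the brackets $[e_i,e_j]$, and each such bracket has weight $\alpha_i+\alpha_j$ with $i\neq j$. The primitive weights are linearly independent by maximal rank, so the weights $\alpha_i+\alpha_j$ are pairwise distinct and $\dim\mathcal N_{\alpha_i+\alpha_j}\le 1$. Hence
$$W=\{\alpha_i\}\cup\{\alpha_i+\alpha_j : (i,j)\in E\},$$
where $E$ is the set of pairs with $[e_i,e_j]\neq 0$. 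For $(i,j)\in E$ put $e_{ij}=[e_i,e_j]$. Every weight has length $1$ or $2$, and $\mathcal R_{\mathcal T}$ has weight $0$ exactly on $\mathcal T$.

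Next I list the possible invariant $2$-cochains. A $\mathcal T$-invariant $f$ sends $\mathcal N_a\times\mathcal N_b$ into the $(a+b)$-weight space of $\mathcal R_{\mathcal T}$.
\begin{itemize}
\item $f(e_i,e_{jk})$ has weight of length $3$ and $f(e_{ij},e_{kl})$ has weight of length $4$. No such weights occur, so these values vanish.
\item $f$ never takes values in $\mathcal T$, because a sum of two weights is never $0$.
\item $f(e_i,e_j)=c_{ij}e_{ij}$ if $(i,j)\in E$, and $f(e_i,e_j)=0$ otherwise.
\end{itemize}
Thus the invariant $2$-cochains form a space parametrized by the scalars $c_{ij}$, $(i,j)\in E$.

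Similarly, an invariant $1$-cochain $g$ has $g(e_i)=a_ie_i$ and $g(e_{ij})=b_{ij}e_{ij}$, with arbitrary scalars $a_i,b_{ij}$. Its coboundary is
$$dg(e_i,e_j)=[e_i,g(e_j)]-[e_j,g(e_i)]-g([e_i,e_j])=(a_i+a_j-b_{ij})e_{ij}.$$
All other components of $dg$ vanish for the same weight reasons as above. Choosing $a_i=0$ and $b_{ij}=-c_{ij}$ gives $dg=f$. Therefore every invariant $2$-cochain, and in particular every invariant cocycle, is a coboundary, so $H^2(\mathcal N,\mathcal R_{\mathcal T})^{\mathcal T}=0$.

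The main point needing care is the weight bookkeeping. I must confirm that no component of $f$ can be nonzero beyond the pairs $(e_i,e_j)$; in particular, no $\mathcal T$-valued component and no component landing in length $3$ or $4$. This rests on the independence of the primitive weights, which gives distinctness and one-dimensionality of the weight spaces $\mathcal N_{\alpha_i+\alpha_j}$. Once that is checked, the coboundary computation is immediate.
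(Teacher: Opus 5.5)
Your argument is correct and takes essentially the same approach as the paper, whose proof consists only of the remark that a direct analysis of an invariant $2$-cocycle shows it is a coboundary. You carry out that analysis in full: linear independence of the primitive weights leaves only the components $f(e_i,e_j)=c_{ij}e_{ij}$, and these equal $d^1g$ for the invariant $1$-cochain with $g(e_i)=0$, $g(e_{ij})=-c_{ij}e_{ij}$, while the cases $j=0,1$ are handled by completeness of $\mathcal R_{\mathcal T}$, as in the paper.
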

\begin{proof} A straightforward analysis of the structure a $2$-cocycle leads that it is a $2$-coboundary.
\end{proof}

From now on, we focus on solvable Lie algebras $\cal R_{\cal T}$ with a nilradical of nilindex greater than 3.

For each weight space $\mathcal N_{\alpha} = \operatorname{Span}\{e_{\alpha}\}$ with $\alpha \in W$, 
we denote by $c(\alpha,\beta)$ the corresponding structure constant defined by the product $[e_{\alpha}, e_{\beta}] = c(\alpha,\beta)\, e_{\alpha+\beta}.$ 

Let $\mathcal{M}$ be a one-dimensional $\mathcal{R}_{\mathcal{T}}$-module, say $\mathcal{M} = \mathbb{C} m$, and let the weight of $\mathcal{T}$ on $\mathcal{M}$ be $\Lambda$.  
Then, for any $f \in H^2(\mathcal{N}, \mathcal{M})^{\mathcal{T}}$, we may write
$$f(e_{\alpha}, e_{\beta}) =
\begin{cases}
\Phi(\alpha,\beta)\, m, & \text{if } \alpha + \beta = \Lambda, \\[1ex]
0, & \text{otherwise}.
\end{cases}
$$

The one-dimensionality of $\mathcal{M}$ implies that the $\mathcal{T}$-invariant second-cocycle condition simplifies, reducing the usual sum of six terms to a sum of three.  
For a triple of elements $\{e_{\alpha}, e_{\beta}, e_{\gamma}\}$, the Jacobi identity and the second-cocycle condition, expressed in terms of the structure constants and the scalars defining $f$, then take the form
\begin{equation}\label{eq2}
\left\{\begin{array}{llll}
c(\alpha,\beta)c(\alpha+\beta,\gamma) + c(\beta,\gamma)c(\beta+\gamma,\alpha) + c(\gamma,\alpha)c(\gamma+\alpha,\beta) &=& 0, \\[3mm]
c(\alpha,\beta)\Phi(\alpha+\beta,\gamma) + c(\beta,\gamma)\Phi(\beta+\gamma,\alpha) + c(\gamma,\alpha)\Phi(\gamma+\alpha,\beta) &=& 0.
\end{array}\right.
\end{equation}

For convenience, we denote by $L(\alpha,\beta,\gamma)$ and $Z(\alpha,\beta,\gamma)$ the left-hand sides of the first and second equalities in \eqref{eq2}, respectively.

\section{Solvable Lie algebras of maximal rank with vanishing second cohomology group.}\label{sec3}

In this section, building upon the Leger–Luks' conditions, we establish sufficient conditions to ensure cohomological rigidity for a broader class of solvable Lie algebras of maximal rank.
Taking into account that solvable Lie algebra $\cal R_{\cal T}$ of maximal rank is complete algebra, an application of Theorem 2 from \cite{Meng} yields the following auxiliary result.

\begin{lem}\label{2.14} For any non-primitive weight $\alpha\in W$ there exists a primitive  weight $\alpha_{i_0}$  such that $\alpha=\beta+\alpha_{i_0}$ for some $\beta\in W$, that is, $c({\alpha_{i_0}, \beta})\neq 0.$
\end{lem}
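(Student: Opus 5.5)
We argue by induction on the length of $\alpha$ and reduce everything to a statement about generators of $\mathcal N$. The only input is the structure of $\mathcal N$ as a $\mathcal T$-graded algebra: $\mathcal N$ is nilpotent of maximal rank, each weight space is one-dimensional by (ii), and the primitive weights $\alpha_1,\dots,\alpha_n$ are the weights of a complement of $\mathcal N^2$ (condition (i)). The completeness of $\mathcal R_{\mathcal T}$ from \cite{Meng} guarantees that the torus acts with linearly independent primitive weights. This independence makes the expression $\mu=\sum r_i\alpha_i$ unique, so length is well defined.

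\textbf{Step 1.} Choose weight vectors $e_{\alpha_1},\dots,e_{\alpha_n}$ spanning a $\mathcal T$-invariant complement of $\mathcal N^2$. Since $\mathcal N$ is nilpotent, $\mathcal N^2=[\mathcal N,\mathcal N]$ is contained in the Frattini subalgebra, so these vectors generate $\mathcal N$. Hence $\mathcal N$ is spanned by the left-normed brackets
$$[e_{\alpha_{i_1}},[e_{\alpha_{i_2}},[\dots,e_{\alpha_{i_k}}]\dots]].$$
Every such bracket is a weight vector of weight $\alpha_{i_1}+\dots+\alpha_{i_k}$.

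\textbf{Step 2.} Let $\alpha\in W$ be non-primitive. Then $e_\alpha\in\mathcal N^2$, because the complement of $\mathcal N^2$ consists of primitive weight spaces and weight spaces are one-dimensional. Decompose $e_\alpha$ into its weight-$\alpha$ component within the span above. At least one bracket $[e_{\alpha_{i_1}},y]$ of weight $\alpha$ is nonzero, where $y$ is a nonzero weight vector of weight $\beta=\alpha-\alpha_{i_1}$. Then $\beta\in W$, since $y\neq0$ and weights are exactly the weights occurring in $\mathcal N$. Writing $y=c\,e_\beta$ gives $[e_{\alpha_{i_1}},e_\beta]\ne0$, that is, $c(\alpha_{i_1},\beta)\neq0$. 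Setting $i_0=i_1$ proves the claim.

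The main obstacle is Step 2's need for nonzero brackets to be detected weightwise. If some weight vector had weight equal to a primitive weight while lying in $\mathcal N^2$, the argument would break down. Ruling this out is precisely where maximal rank and the linear independence of the primitive weights, coming from \cite{Meng}, are used. A weight $\alpha_i$ cannot equal a sum of two or more primitive weights, so $\mathcal N^2$ contains no primitive weight spaces. I would first verify this independence carefully and only then run the spanning argument above.
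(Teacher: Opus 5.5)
Your argument is correct, but it takes a different route from the paper. The paper gives no internal argument: it states the lemma as a consequence of Theorem~2 of Meng, after noting that $\mathcal R_{\mathcal T}$ is complete. You instead prove it from first principles:
\begin{itemize}
\item a $\mathcal T$-stable complement $V$ of $\mathcal N^2$ generates the nilpotent algebra $\mathcal N$;
\item so $\mathcal N$ is spanned by nested brackets $[e_{\alpha_{i_1}},[\dots]]$ whose outermost entry is a generator;
\item projecting $e_\alpha$ onto its weight space gives a nonzero bracket $[e_{\alpha_{i_1}},y]$ of weight $\alpha$, where $y\neq 0$ has weight $\beta=\alpha-\alpha_{i_1}$;
\item hence $\beta\in W$, and one-dimensionality of $\mathcal N_\beta$ gives $c(\alpha_{i_1},\beta)\neq 0$.
\end{itemize}
What your route buys is a self-contained proof that uses only nilpotency, diagonalizability of the $\mathcal T$-action and the one-dimensionality in condition (ii), with no appeal to completeness. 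The paper's route is shorter but leaves all the content inside the reference.

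Parts of your write-up are superfluous and should be trimmed:
\begin{itemize}
\item The announced induction on length is never used.
\item For non-primitive $\alpha$, the inclusion $\mathcal N_\alpha\subseteq\mathcal N^2$ follows at once from the graded decomposition $\mathcal N_\alpha=V_\alpha\oplus(\mathcal N^2)_\alpha$ with $V_\alpha=0$. So the ``main obstacle'' you describe is not an obstacle, and linear independence of $\Pi$ plays no role in the argument.
\item Where that independence is needed at all (to make length well defined), it follows directly from maximal rank rather than from completeness. Since $\mathcal T$ acts faithfully by derivations, and a derivation vanishing on generators vanishes identically, the map $t\mapsto(\alpha_1(t),\dots,\alpha_n(t))$ is injective. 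Because $\dim\mathcal T=n$, it is bijective, so $\alpha_1,\dots,\alpha_n$ are linearly independent.
\end{itemize}
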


Let us present the main result of this section.

\begin{thm}\label{thm3.2}Let $\cal R_{\cal T}$ be a solvable Lie algebra with nilradical $\cal N$ and let $\cal M$ be an $\cal R_{\cal T}$-module such that the action of $\cal T$ on $\cal M$ is diagonal and the weight of $\cal T$ on $\cal M$ are in $W$. If for each weight $\lambda$ of $\cal T$ on  $\cal N$ at least one of the following conditions holds true
\begin{itemize}
    \item[$i)$] Leger-Luks' conditions;
    \item[$ii)$] $\lambda=\alpha_{i}+\mu$ for some $\alpha_{i}\in \Pi$ and $\lambda-\alpha_j\notin W$ for all $\alpha_j\in \Pi \setminus \{\alpha_{i}\}$;
    \item[$iii)$] $\lambda=\alpha_{i}+\alpha_{j}$ or $\lambda=\alpha_{i}+(\alpha_{i}+\alpha_{j})$ for some distinct $\alpha_i,\alpha_j\in\Pi$.
\end{itemize}
Then $H^2(\cal N,\cal M)^{\cal T}=0.$
\end{thm}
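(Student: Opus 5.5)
First, reduce to a one-dimensional module. Since $\mathcal T$ acts diagonally on $\mathcal M$, write $\mathcal M=\bigoplus_\Lambda \mathcal M_\Lambda$ as a sum of weight spaces with $\Lambda\in W$. A $\mathcal T$-invariant cochain maps $\mathcal N_\alpha\wedge\mathcal N_\beta$ into $\mathcal M_{\alpha+\beta}$. Hence, for each fixed weight $\Lambda$, the component of $f$ with values in $\mathcal M_\Lambda$ is again an invariant cochain. Coboundaries of invariant $1$-cochains also split along $\Lambda$. I would therefore fix one weight $\Lambda=\lambda\in W$ and work with the component $f(e_\alpha,e_\beta)=\Phi(\alpha,\beta)m$ for $\alpha+\beta=\lambda$, as in the preliminaries. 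Strictly, $\mathcal N$ acts on $\mathcal M$ and need not preserve a line. However, $\mathcal N$ raises weights, so I would filter $\mathcal M$ by weight length and argue by induction on that length. Alternatively, the $\mathcal N$-action terms in the cocycle identity only involve components of $f$ of smaller total weight, which I take to be already normalised to $0$. In either version the core computation is the relation $Z(\alpha,\beta,\gamma)=0$ of \eqref{eq2}.

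Next, normalise by coboundaries. An invariant $1$-cochain $g$ has $g(e_\mu)\in\mathcal M_{\mu}$, and its coboundary gives $dg(e_\alpha,e_\beta)= -c(\alpha,\beta)g(e_\lambda)$ plus lower-weight module terms. By Lemma~\ref{2.14}, when $\lambda$ is non-primitive there is a primitive $\alpha_{i_0}$ with $c(\alpha_{i_0},\lambda-\alpha_{i_0})\ne 0$. Choosing $g(e_\lambda)$ suitably therefore kills $\Phi(\alpha_{i_0},\lambda-\alpha_{i_0})$. When $\lambda$ is primitive there is nothing to do, because $\lambda$ is not a sum of two weights. The goal is then to show that all remaining $\Phi(\alpha,\beta)$ with $\alpha+\beta=\lambda$ vanish, and I would treat the three hypotheses separately.

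Under condition ii), $\lambda-\alpha_j\notin W$ for every $j\ne i$. Take any decomposition $\lambda=\alpha+\beta$. If $\alpha$ is primitive, it must be $\alpha_i$, so the pair is the normalised one. Otherwise I apply Lemma~\ref{2.14} to write $\alpha=\alpha_k+\alpha'$ and use $Z(\alpha_k,\alpha',\beta)=0$. The term $\Phi(\alpha_k,\alpha'+\beta)$ vanishes unless $k=i$, since $\alpha'+\beta=\lambda-\alpha_k$ is not a weight for $k\ne i$. So $\Phi(\alpha,\beta)$ is expressed through $\Phi$ evaluated on pairs in which one entry has smaller length. Inducting on $\min(\operatorname{length}\alpha,\operatorname{length}\beta)$ then reduces everything to the pair $(\alpha_i,\lambda-\alpha_i)$, which is $0$. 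Under condition iii), the weight $\lambda=\alpha_i+\alpha_j$ has the single decomposition $(\alpha_i,\alpha_j)$, which is killed directly by the normalisation. The weight $\lambda=2\alpha_i+\alpha_j$ has decompositions only $(\alpha_i,\alpha_i+\alpha_j)$ and $(2\alpha_i,\alpha_j)$, the latter only if $2\alpha_i\in W$. But $2\alpha_i\notin W$, since $[e_{\alpha_i},e_{\alpha_i}]=0$ and weight spaces are one-dimensional. So again only one pair survives, and it is killed. Under condition i), I would reproduce the Leger–Luks argument. For two decompositions $\lambda=\alpha+\gamma=\beta+\delta$ with $\alpha,\beta$ primitive, write $\delta=\alpha+\mu$ and $\gamma=\beta+\mu$, and apply $Z(\alpha,\beta,\mu)=0$ together with, in Cases 2 and 3, $Z(\beta,\beta,\nu)$-type relations. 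These give linear relations that force $\Phi(\alpha,\gamma)$ and $\Phi(\beta,\delta)$ to be proportional in the ratio of the coefficients $c$. The single normalisation then kills all pairs with a primitive entry. Pairs with no primitive entry are handled by the same length induction as in ii).

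The main obstacle is consistency. A single coboundary adjustment at $\lambda$ must simultaneously kill every decomposition, so one must check that the relations obtained from the various $Z=0$ identities are compatible with the ratios of the $c$'s coming from $dg$. Here the Jacobi relation $L(\alpha,\beta,\mu)=0$ of \eqref{eq2} must be used alongside $Z=0$. In Case 1 of Leger–Luks, where $\alpha+\beta\notin W$, the term $c(\alpha,\beta)$ vanishes and the identity $Z(\alpha,\beta,\mu)=0$ becomes a clean two-term relation. Cases 2 and 3 require an auxiliary step through $\nu$, and this is where I expect the bookkeeping to be delicate.
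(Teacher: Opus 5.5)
Your proposal is correct. For cases (ii) and (iii) it uses the same mechanism as the paper: a single coboundary supported on $e_\Lambda$, Lemma~\ref{2.14} to split off a primitive, the identity $Z=0$, and an induction on length. It differs from the paper in two places.

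\emph{The reduction.} The paper takes an $\mathcal N$-annihilated line from Engel's theorem and uses exactness of $H^2(\mathcal N,\mathcal M')^{\mathcal T}\to H^2(\mathcal N,\mathcal M)^{\mathcal T}\to H^2(\mathcal N,\mathcal M/\mathcal M')^{\mathcal T}$. This reduces to one-dimensional modules with trivial $\mathcal N$-action, where \eqref{eq2} holds verbatim. You instead kill the weight components of $f$ in order of increasing length. That works: the module-action terms in the $\lambda$-component of $df$ only involve components of weight $\lambda-\alpha$, and subtracting $dg_\lambda$ with $g_\lambda$ supported on $e_\lambda$ only changes the $\lambda$-component and strictly longer ones. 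Your opening remark that coboundaries ``split along $\Lambda$'' is false when $\mathcal N$ acts nontrivially; only the filtered version you then describe is valid. Your route builds the primitive $g$ directly, while the paper's avoids all module-action bookkeeping.

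\emph{Case (ii).} The paper proves $\Phi(\nu,\Lambda-\nu)=a(\nu)\Phi(\alpha_1,\Lambda-\alpha_1)$ and $c(\nu,\Lambda-\nu)=a(\nu)c(\alpha_1,\Lambda-\alpha_1)$ with the same $a(\nu)$, and only then defines $g$. You normalize first and show vanishing by induction on $\min(\mathrm{length}\,\alpha,\mathrm{length}\,\beta)$. The two are equivalent, but yours makes clear what case (i) has to supply: only that the values $\Phi(\alpha_k,\lambda-\alpha_k)$ are proportional to $c(\alpha_k,\lambda-\alpha_k)$. Once those primitive-entry values vanish, your own length induction (the middle term of $Z(\alpha_k,\alpha',\beta)$ is then zero) handles every other pair. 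So the ``consistency'' worry in your last paragraph is not an extra obstacle.

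One correction to your sketch of case (i), which the paper simply cites from Leger--Luks. $Z(\beta,\beta,\nu)$ vanishes identically by antisymmetry and gives no information. In Case 2 the relations you need are $Z(\alpha+\beta,\beta,\nu)=0$ and $L(\alpha+\beta,\beta,\nu)=0$. Their first terms drop because $\alpha+2\beta\notin W$, giving $c(\beta,\nu)\Phi(\mu,\alpha+\beta)=-c(\nu,\alpha+\beta)\Phi(\delta,\beta)$, and the same identity with $c$ in place of $\Phi$. Substituting into $Z(\alpha,\beta,\mu)=0$ gives a two-term relation $c(\beta,\mu)\Phi(\gamma,\alpha)+\kappa\,\Phi(\delta,\beta)=0$, with the same $\kappa$ as in the corresponding relation among the $c$'s. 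This yields the required proportionality. Case 3 is symmetric, using $Z(\alpha+\beta,\alpha,\nu)=0$. The divisions by $c(\beta,\mu)$, $c(\beta,\nu)$ (respectively $c(\alpha,\nu)$) are justified by the standing assumption (ii), that $[\mathcal N_\alpha,\mathcal N_\beta]=\mathcal N_{\alpha+\beta}$ whenever $\alpha,\beta,\alpha+\beta\in W$.
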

\begin{proof} 

From the identity
\[
t\cdot (e_{\alpha}\cdot m_{\beta})
= [t,e_{\alpha}]\cdot m_{\beta} + e_{\alpha}\cdot (t\cdot m_{\beta})
= (\alpha+\beta)(t)\, e_{\alpha}\cdot m_{\beta}, 
\qquad e_{\alpha}\in \mathcal N_{\alpha},\; m_{\beta}\in \mathcal M,
\]
it follows that
$\mathcal N_{\alpha}\cdot \mathcal M_{\beta} \subseteq \mathcal M_{\alpha+\beta}.$ Thus, $\mathcal N$ acts on $\mathcal M$ by nilpotent operators. By Engel’s theorem, since $\mathcal M\neq 0$, there exists a nonzero vector $m\in \mathcal M$ such that 
$\mathcal N\cdot m = 0$. Consequently, $\mathcal M$ possesses the proper 
$\mathcal N\rtimes \mathcal T$-submodules: $\mathcal M'=\mathbb C m$ and $\mathcal M/\mathcal M'.$ 

This yields the short exact sequence 
$$H^{2}(\mathcal N,\mathcal M')^{\mathcal T}
\;\longrightarrow\;
H^{2}(\mathcal N,\mathcal M)^{\mathcal T}
\;\longrightarrow\;
H^{2}(\mathcal N,\mathcal M/\mathcal M')^{\mathcal T}.$$

Applying the same argument to the module $\mathcal M/\mathcal M'$ produces another short exact sequence of the same form. Iterating this procedure, we eventually obtain an exact sequence with of one-dimensional modules involved in outer terms. Now, assuming that $H^{2}(\mathcal N, \mathcal V)^{\mathcal T}=0$ for every one-dimensional module $\mathcal V$, the above induction shows that
$H^{2}(\mathcal N, \mathcal M)^{\mathcal T}=0.$

Therefore, in order to prove the theorem, it suffices to consider the case 
where $\mathcal M$ is one-dimensional as a module over $\mathcal N\rtimes \mathcal T$. Let suppose $\cal M=\mathbb Cm$ and $\Lambda$ is the  weight of $\cal T$ on $\cal M$. Further we shall focus on weight $\lambda=\Lambda.$

Let take arbitrary $f\in H^2(\cal N,\cal M)^{\cal T}$ such that $\cal T\cdot f=0$. 
 
\textbf{Case (i)}.  Then the result follows directly from the work \cite{Leger2}
(see, proof of Proposition 5.3, in \cite{Leger2}), where it is constructed $g\in C^1(\cal N,\cal M)$ such that $f=\delta g$.

\textbf{Case (ii)}. Without loss of generality, one can assume that $\Lambda = \alpha_{1} + \mu$ for some weight $\mu$. 

We prove the equality
\begin{equation}\label{eq4}
\Phi(\nu, \Lambda - \nu) = a(\nu)\,\Phi(\alpha_{1}, \Lambda - \alpha_{1}), \quad a(\nu)\in \mathbb{C}
\end{equation}
for every weight $\nu \in W$, by induction on the length of $\nu$. 

For $\mathrm{length}(\nu)=1$, Equality \eqref{eq4} immediately follows from condition (ii). 
Let $\mathrm{length}(\nu)=2$, that is,  $\nu=\alpha_{s}+\alpha_{t}$. 

If $\Lambda-\nu\notin W$ we set $a(\nu)=0$, and \eqref{eq4} holds trivially. Thus, we may assume that $\Lambda-\nu\in W$. In this case we consider 
\begin{equation}\label{eq3}
L(\alpha_{s},\alpha_{t},\Lambda-(\alpha_{s}+\alpha_{t}))
=
Z(\alpha_{s},\alpha_{t},\Lambda-(\alpha_{s}+\alpha_{t}))
=0.
\end{equation}

If $2\le s\neq t \le n$, then \eqref{eq3} implies 
$$c(\alpha_s+\alpha_t,\Lambda-(\alpha_s+\alpha_t))= \Phi(\alpha_s+\alpha_t,\Lambda-(\alpha_s+\alpha_t))=0.$$ 

If $s=1$, then \eqref{eq3} deduces
$$ \begin{array}{ccc}
c(\alpha_1+\alpha_t,\Lambda-(\alpha_1+\alpha_t))= \left\{\begin{array}{ccc}
\frac{c(\alpha_t,\Lambda-(\alpha_1+\alpha_t))}{c(\alpha_1,\alpha_t)}c(\alpha_1,\Lambda-\alpha_1), 
&
\text{if} \quad 
c(\alpha_1,\alpha_t) \neq 0,\\[1mm]
0, & \text{otherwise,}\end{array}\right.
\end{array}$$ 
$$ \begin{array}{ccc}
\Phi(\alpha_1+\alpha_t,\Lambda-(\alpha_1+\alpha_t))= \left\{\begin{array}{ccc}
\frac{c(\alpha_t,\Lambda-(\alpha_1+\alpha_t))}{c(\alpha_1,\alpha_t)}\Phi(\alpha_1,\Lambda-\alpha_1), 
&
\text{if} \quad 
c(\alpha_1,\alpha_t) \neq 0,\\[1mm]
0, & \text{otherwise.}\end{array}\right.
\end{array}$$
Thus, for a suitable scalar $a(\nu)$, we obtain
$\Phi(\nu, \Lambda-\nu) = a(\nu)\,\Phi(\alpha_{1}, \Lambda-\alpha_{1})$ for every weight $\nu$ of length $2$.

Assume that \eqref{eq4} holds for every weight $\nu$ satisfying $\mathrm{length}(\nu)\leq p$, and let $\nu$ be an arbitrary weight of length $p+1$.

If $\Lambda - \nu \notin W$, we set $a(\nu)=0$. Now suppose that $\nu \in W$ and $\Lambda - \nu \in W$. By Lemma~\ref{2.14}, there exist a primitive weight $\alpha$ and a weight $\tau \in W$ such that 
 $$\nu = \alpha + \tau, \qquad [e_{\alpha}, e_{\tau}] \neq 0.
$$

From the equality $Z(\alpha,\tau,\Lambda-\nu)=0$, together with the induction hypothesis, we deduce that the parameters $\Phi(\Lambda-\alpha,\alpha)$ and $\Phi(\Lambda-\tau,\tau)$ are linearly determined by $\Phi(\alpha_{1},\Lambda-\alpha_{1})$. Hence, \eqref{eq3} holds for every $\nu\in W$.

Similarly, we obtain
\begin{equation}\label{eq5}
c(\nu, \Lambda-\nu)=a(\nu)\, c(\alpha_{1},\Lambda-\alpha_{1}).
\end{equation}

From condition~(ii) together with Lemma~\ref{2.14}, we conclude that $
c(\alpha_{1},\Lambda-\alpha_{1}) \neq 0.$ 

Using identities \eqref{eq3} and \eqref{eq5}, we derive
\begin{equation}\label{eq6}
c(\alpha_{1},\Lambda-\alpha_{1})\,\Phi(\nu, \Lambda-\nu)
   = c(\nu, \Lambda-\nu)\,\Phi(\alpha_{1},\Lambda-\alpha_{1}), 
   \qquad \nu \in W.
\end{equation}

Define $g \in C^1(\mathcal{N}, \mathcal{M})$ by
\begin{equation}\label{eq7}g(\rho)=\left\{\begin{array}{cccc}

-\frac{\Phi(\alpha_{1},\Lambda-\alpha_{1})}{c(\alpha_{1},\Lambda-\alpha_{1})}m, & \text{if }\rho=\Lambda, \\[2mm]
0, & \text{if }\rho\neq\Lambda.
\end{array}\right.
\end{equation}

If $\alpha+\gamma\ne \Lambda$, then it is immediate that 
$(f-d^1 g)(e_\alpha, e_{\gamma})=0.$

If $\alpha+\gamma=\Lambda$, then applying \eqref{eq6} with 
$\nu=\alpha$ and $\Lambda -\nu=\gamma$ yields
\[
(f-d^1 g)(e_\alpha, e_{\gamma})
   = f(e_\alpha,e_{\gamma}) + c(\alpha,\gamma)\,g(e_{\Lambda}) = 0.
\]

\textbf{Case (iii)}. Let $\Lambda = \alpha_i + \alpha_j$ or $\Lambda = 2\alpha_i + \alpha_j$, with some distinct  $\alpha_i, \alpha_j \in \Pi,$.  
Then, by defining $g$ as in \eqref{eq7} with $\alpha_1$ replaced by $\alpha_i$, we conclude that $f - d^1 g = 0.$
\end{proof}

Since $H^i(\mathcal{R}_{\mathcal{T}}, \mathcal{R}_{\mathcal{T}}) = 0,\ 0 \le i \le 1$ for any solvable Lie algebra $\cal R_{\cal T}$ (see \cite{super}), it follows from Remark~\ref{rm2.2} that we obtain the following corollary.

\begin{cor}
Let $\mathcal{R}_{\mathcal{T}}$ be a solvable Lie algebra satisfying the conditions of Theorem~\ref{thm3.2}. 
Then $\mathcal{R}_{\mathcal{T}}$ is cohomologically rigid.
\end{cor}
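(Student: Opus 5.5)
The corollary should follow from Theorem~\ref{thm3.2} combined with the Hochschild--Serre factorization (Theorem~\ref{Serre}) and Remark~\ref{rm2.2}. We apply Theorem~\ref{Serre} with $\mathcal G=\mathcal R_{\mathcal T}$, $\mathcal Q=\mathcal T$ and $\mathcal M=\mathcal R_{\mathcal T}$ the adjoint module. The hypotheses hold because $\mathcal T$ is abelian and acts diagonally on $\mathcal N$. Its adjoint action on $\mathcal R_{\mathcal T}=\mathcal N\oplus\mathcal T$ is also diagonal, with weight zero on $\mathcal T$ and weights in $W$ on $\mathcal N$. By Remark~\ref{rm2.2}, $H^2(\mathcal R_{\mathcal T},\mathcal R_{\mathcal T})=0$ if and only if $H^j(\mathcal N,\mathcal R_{\mathcal T})^{\mathcal T}=0$ for $j=0,1,2$.

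The cases $j=0,1$ are handled by the cited fact that $H^i(\mathcal R_{\mathcal T},\mathcal R_{\mathcal T})=0$ for $0\le i\le 1$. Applying Remark~\ref{rm2.2} to $n=1$, this vanishing is equivalent to $H^0(\mathcal N,\mathcal R_{\mathcal T})^{\mathcal T}=H^1(\mathcal N,\mathcal R_{\mathcal T})^{\mathcal T}=0$. So only the case $j=2$ remains, and for it we want to invoke Theorem~\ref{thm3.2} with $\mathcal M=\mathcal R_{\mathcal T}$.

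This is the main obstacle, because Theorem~\ref{thm3.2} assumes the $\mathcal T$-weights on $\mathcal M$ lie in $W$, while $\mathcal R_{\mathcal T}$ also carries the weight $0$ on $\mathcal T$. I would deal with it by using the filtration argument from the proof of Theorem~\ref{thm3.2} directly. The subspace $\mathcal N$ is an ideal, so $0\to\mathcal N\to\mathcal R_{\mathcal T}\to\mathcal T\to 0$ is a sequence of $\mathcal R_{\mathcal T}$-modules, and the quotient $\mathcal T$ is a trivial module of weight $0$. Its long exact sequence gives
$$H^{2}(\mathcal N,\mathcal N)^{\mathcal T}\to H^{2}(\mathcal N,\mathcal R_{\mathcal T})^{\mathcal T}\to H^{2}(\mathcal N,\mathcal T)^{\mathcal T}.$$
Theorem~\ref{thm3.2} applies to the module $\mathcal N$, since all its weights lie in $W$, so the first term vanishes. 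For the last term, a $\mathcal T$-invariant cochain $f$ must satisfy $f(\mathcal N_\alpha,\mathcal N_\beta)\subseteq\mathcal T_{\alpha+\beta}$. Every weight $\alpha+\beta$ with $\alpha,\beta\in W$ is a nonzero combination of primitive weights with nonnegative coefficients, hence nonzero, while $\mathcal T$ has only the weight $0$. So every invariant $2$-cochain with values in $\mathcal T$ is zero, and the last term vanishes.

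Exactness then gives $H^2(\mathcal N,\mathcal R_{\mathcal T})^{\mathcal T}=0$. Together with the vanishing for $j=0,1$ and Remark~\ref{rm2.2}, this yields $H^2(\mathcal R_{\mathcal T},\mathcal R_{\mathcal T})=0$, that is, $\mathcal R_{\mathcal T}$ is cohomologically rigid.
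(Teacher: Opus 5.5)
Your proof is correct and follows the paper's route. The vanishing of $H^0$ and $H^1$ of $\mathcal R_{\mathcal T}$, combined with Remark~\ref{rm2.2}, handles $j=0,1$, and Theorem~\ref{thm3.2} handles $j=2$. Your extra step through $0\to\mathcal N\to\mathcal R_{\mathcal T}\to\mathcal T\to 0$ makes explicit something the paper's one-line argument passes over: Theorem~\ref{thm3.2} assumes the $\mathcal T$-weights of $\mathcal M$ lie in $W$, while the adjoint module also carries the weight $0$ on $\mathcal T$, and you correctly observe that this summand admits no nonzero $\mathcal T$-invariant $2$-cochains.
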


Recall that the cohomological rigidity of solvable Lie algebras of maximal rank in dimensions not exceeding $9$ was established in~\cite{Ancochea7}. In particular, the algebras
$$\mu_5^1, \ \mu_6^2, \ \mu_7^5, \ \mu_7^6, \ \mu_8^{22}, \ \mu_8^{25}, \ \mu_8^{26}, \ \mu_8^{33}$$
from that classification satisfy conditions (ii) and (iii) of Theorem~\ref{thm3.2}. Moreover, the lengthy computations required to establish the cohomological rigidity of the maximal solvable extension of the model nilpotent Lie algebra, as carried out in~\cite{Ancochea4}, can be considerably simplified by applying Theorem~\ref{thm3.2}, since this solvable algebra satisfies both conditions (ii) and (iii).   Similarly, the maximal solvable extensions of the model filiform Lie algebra $n_1$ also satisfy the assumptions of cases (ii) and (iii), and their cohomological rigidity is established in~\cite{libroKluwer}.  

Establishing the cohomological rigidity of solvable Lie algebras typically requires extensive computations. However, by applying Theorem~\ref{thm3.2}, the cohomological rigidity of the above mentioned algebras can be determined in a significantly more efficient manner whenever at least one of the conditions (i)--(iii) is satisfied.

We now proceed with the following results, which will be useful in the study of high-order cohomology groups.

\begin{lem}\label{lem3.4} Let $\mathcal R_{\mathcal T}$ be a solvable Lie algebra with nilradical $\cal N$ of nilindex $(s+1)$. Let $\mathcal M$ be an $\mathcal R_{\mathcal T}$-module on which the action of $\mathcal T$ on $\cal M$ is diagonal, and assume that all weights of $\mathcal T$ on $\mathcal M$ lie in $W$. Then $H^{s}(\mathcal N,\mathcal M)^{\mathcal T} = 0.$
\end{lem}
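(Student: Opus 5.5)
The plan is a degree count with respect to the grading by $\operatorname{length}$. First I reduce to one-dimensional modules, exactly as in the proof of Theorem~\ref{thm3.2}. By Engel's theorem there is a $\mathcal T$-stable flag of $\mathcal N\rtimes\mathcal T$-submodules of $\mathcal M$ with one-dimensional quotients. In each piece of the long exact sequence
$$H^{s}(\mathcal N,\mathcal M')^{\mathcal T}\to H^{s}(\mathcal N,\mathcal M)^{\mathcal T}\to H^{s}(\mathcal N,\mathcal M/\mathcal M')^{\mathcal T},$$
the middle term vanishes once the outer ones do. So it suffices to treat $\mathcal M=\mathbb C m$ with weight $\Lambda\in W$. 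On such a module $\mathcal N$ acts by zero, since $\mathcal N_\alpha\cdot\mathcal M_\Lambda\subseteq\mathcal M_{\Lambda+\alpha}=0$.

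Next I use lengths. Since $\mathcal N$ is generated by the primitive weight vectors, a weight of length $k$ lies in $\mathcal N^k$. Because $\mathcal N^{s+1}=0$, every weight in $W$ has length at most $s$; in particular $\operatorname{length}(\Lambda)\le s$.

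Take a $\mathcal T$-invariant $k$-cochain $h$ with $h(e_{\beta_1},\dots,e_{\beta_k})\neq0$. Invariance forces $\beta_1+\dots+\beta_k=\Lambda$, and the length of this sum is at least $k$. This gives three consequences.

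\begin{itemize}
\item Every invariant $(s+1)$-cochain vanishes, so every invariant $s$-cochain $f$ is a cocycle.
\item An invariant $s$-cochain $f$ can be nonzero only if $\operatorname{length}(\Lambda)=s$ and all arguments are primitive. The same holds for any coboundary $d^{s-1}g$ with $g$ invariant.
\item Since $f$ is alternating and $\dim\mathcal N_\alpha=1$, the arguments must be pairwise distinct primitives $\alpha_{i_1},\dots,\alpha_{i_s}$. So $\Lambda=\alpha_{i_1}+\dots+\alpha_{i_s}$ with all coefficients $0$ or $1$, and $f$ is determined by the single scalar $\phi=f(e_{\alpha_{i_1}},\dots,e_{\alpha_{i_s}})$.
\end{itemize}

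It remains to produce an invariant $g\in C^{s-1}(\mathcal N,\mathcal M)$ with $d^{s-1}g=f$. Since $e_\Lambda\neq0$ lies in the subalgebra generated by $e_{\alpha_{i_1}},\dots,e_{\alpha_{i_s}}$, some bracket $[e_{\alpha_{i_a}},e_{\alpha_{i_b}}]=c\,e_{\alpha_{i_a}+\alpha_{i_b}}$ with $c\neq0$ must occur. Otherwise all these generators would commute, and the only weights they generate would be the $\alpha_{i_k}$ themselves. Alternatively, one can apply Lemma~\ref{2.14} repeatedly to $\Lambda$.

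Define $g$ by
$$g(e_{\alpha_{i_a}+\alpha_{i_b}},e_{\alpha_{i_{k_1}}},\dots,e_{\alpha_{i_{k_{s-2}}}})=\pm\,\phi/c,$$
where $\{k_1,\dots,k_{s-2}\}$ are the remaining indices, and let $g$ vanish on all other basis tuples. Because $\mathcal N$ acts trivially on $\mathcal M$, $d^{s-1}g$ consists only of the terms $g([x_p,x_q],\dots)$. Evaluated on $(e_{\alpha_{i_1}},\dots,e_{\alpha_{i_s}})$, only the pair $(a,b)$ contributes, since $g$ vanishes on every other tuple. Choosing the sign correctly then gives $d^{s-1}g=\phi$ on this tuple.

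By the support statement above, both $f$ and $d^{s-1}g$ vanish on all tuples other than permutations of this one. Hence $f=d^{s-1}g$, and therefore $H^{s}(\mathcal N,\mathcal M)^{\mathcal T}=0$.

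The main obstacle I expect is bookkeeping rather than substance. First, the sign in $g$ must be fixed consistently. Second, the dévissage needs care: the induced modules $\mathcal M'$ and $\mathcal M/\mathcal M'$ must remain diagonal with weights in $W$, so that the length bound applies to each one-dimensional factor.
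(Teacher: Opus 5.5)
Your proposal is correct and follows essentially the same route as the paper: reduce to a one-dimensional module $\mathbb C m$ of weight $\Lambda\in W$, use $\mathcal N^{s+1}=0$ to force every argument of an invariant $s$-cochain to be primitive, use $\Lambda\in W$ to find a nonzero bracket $c(\alpha_{i_a},\alpha_{i_b})$, and define $g$ on the tuple in which that pair is merged, with sign $(-1)^{a+b}$. Your write-up is more careful than the paper's (it shows the cocycle condition is automatic, that the primitives must be pairwise distinct, and why no other pair contributes), and like the paper it tacitly needs $s\ge 2$, which the paper's standing assumption that the nilindex exceeds $3$ guarantees.
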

\begin{proof} In a similar arguments as in the proof of Theorem~\ref{thm3.2}, it suffices to treat the case in which $\mathcal M=\mathbb C m$ is one–dimensional and $\Lambda$ is the weight of $\mathcal T$ on $\mathcal M$.  
Let $f\in H^{s}(\mathcal N,\mathcal M)^{\mathcal T}$. It is clear that it suffices to consider the case where the maximal rank is at least $s$. Then for any $s$-tuple $(\alpha_{i_1},\dots,\alpha_{i_s})$ we have
$$f(e_{\alpha_{i_1}},\dots,e_{\alpha_{i_s}})\in 
\mathcal N_{\alpha_{i_1}+\cdots+\alpha_{i_s}},$$
and since $\mathcal N^{\,s+1}=0$, the only non-zero contributions arise when
all $\alpha_{i_j}\in\Pi.$

Accordingly, it suffices to consider
$$f(e_{\alpha_{i_1}},\dots,e_{\alpha_{i_s}})
=\Phi(\alpha_{i_1},\dots,\alpha_{i_s})\, m,
\qquad \alpha_{i_j}\in\Pi,  \quad \sum\limits_{j=1}^s\alpha_{i_j}=\Lambda
$$
corresponding to the $\mathcal T$-weight $\Lambda$; all other components of $f$ vanish. Since $\Lambda\in W$, for all $s$-tuple $(\alpha_{i_1},\dots,\alpha_{i_s})$ there exist $1\le{p}<{q}\le s$ such that $c(\alpha_{i_p},\alpha_{i_q})\neq0$. Let define $g\in C^{s-1}(\mathcal N,\mathcal M)$ as

$$\left\{\begin{array}{clll}
g(e_{\alpha_{i_p}+\alpha_{i_q}},e_{\alpha_{i_{1}}},\dots,\widehat{e}_{\alpha_{i_p}}, \cdots, \widehat{e}_{\alpha_{i_q}}, \cdots ,e_{\alpha_{i_s}})&=&(-1)^{p+q}\frac{\Phi(e_{\alpha_{i_1}},\dots,e_{\alpha_{i_s}})}{c(\alpha_{i_p},\alpha_{i_q})}m,\\[3mm]
g(\alpha_{j_1},\dots,\alpha_{j_s})&=&0,\quad\mbox{otherwise}.\\[3mm]\end{array}\right.
$$
Then by the choice of $g$ we obtain 
$$
(f-d^{s-1}g)(e_{\alpha_{i_1}},\dots,e_{\alpha_{i_{s}}})=
0.$$
Hence $f=d^{s-1}g$, and the cocycle $f$ is a $s$-coboundary. This proves $H^{s}(\mathcal N,\mathcal M)^{\mathcal T}=0$.
\end{proof}

\begin{prop}\label{prop4.11} Suppose $\cal R_{\cal T}$ satisfies the conditions of Lemma \ref{lem3.4} and  $H^i(\cal R_{\cal T}, \cal R_{\cal T})=0$ for all $0 \leq i \leq s-1$. Then $H(\cal N, \cal M)^{\cal T}=0$.
\end{prop}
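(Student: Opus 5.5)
We read the conclusion as $H^j(\mathcal N,\mathcal M)^{\mathcal T}=0$ for every $j\ge 0$, with $\mathcal M=\mathcal R_{\mathcal T}$ the adjoint module. By Theorem~\ref{Serre} this says the whole adjoint cohomology $H^\ast(\mathcal R_{\mathcal T},\mathcal R_{\mathcal T})$ vanishes, which is the claim announced in the introduction. The plan is to split the degrees $j$ into three ranges: $j\le s-1$, $j=s$, and $j\ge s+1$.

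\textbf{Range $j\le s-1$.} This is immediate from Remark~\ref{rm2.2}. The hypothesis $H^{i}(\mathcal R_{\mathcal T},\mathcal R_{\mathcal T})=0$ for $0\le i\le s-1$ gives $H^j(\mathcal N,\mathcal R_{\mathcal T})^{\mathcal T}=0$ for all $0\le j\le s-1$.

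\textbf{Range $j\ge s+1$.} Here I would show that the invariant cochains themselves vanish. Every weight of $\mathcal T$ on $\mathcal R_{\mathcal T}$ is either $0$ (on $\mathcal T$) or a weight in $W$ of length at most $s$, because $\mathcal N^{s+1}=0$ and each bracket with a primitive vector raises length by one. A $\mathcal T$-invariant cochain $f$ satisfies
\[
f(\mathcal N_{\beta_1},\dots,\mathcal N_{\beta_j})\subseteq(\mathcal R_{\mathcal T})_{\beta_1+\cdots+\beta_j}.
\]
The weight $\beta_1+\cdots+\beta_j$ has length at least $j\ge s+1$ and is nonzero, since all $\beta_k$ are nonnegative combinations of primitive weights with positive length. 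Hence the target space is zero, so $C^j(\mathcal N,\mathcal R_{\mathcal T})^{\mathcal T}=0$. This uses that $\mathcal T$ acts semisimply, so invariants of cohomology are computed by invariant cochains. In particular $H^j(\mathcal N,\mathcal R_{\mathcal T})^{\mathcal T}=0$.

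\textbf{Degree $j=s$.} I want to invoke Lemma~\ref{lem3.4}, but the weights of $\mathcal R_{\mathcal T}$ are not all in $W$: the weight $0$ occurs on $\mathcal T$. So I would use the short exact sequence of $\mathcal R_{\mathcal T}$-modules
\[
0\to\mathcal N\to\mathcal R_{\mathcal T}\to\mathcal R_{\mathcal T}/\mathcal N\to0 .
\]
Since $\mathcal T$ acts semisimply, taking $\mathcal T$-invariants is exact, and we get the segment
\[
H^s(\mathcal N,\mathcal N)^{\mathcal T}\to H^s(\mathcal N,\mathcal R_{\mathcal T})^{\mathcal T}\to H^s(\mathcal N,\mathcal R_{\mathcal T}/\mathcal N)^{\mathcal T}.
\]
The left term vanishes by Lemma~\ref{lem3.4}, because all weights of $\mathcal N$ lie in $W$. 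The right term vanishes already at the cochain level: its only weight is $0$, while for $s\ge1$ a sum of $s$ weights of $\mathcal N$ is never $0$. Exactness then forces the middle term to be zero.

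The step I expect to be the main obstacle is exactly this mismatch between the adjoint module and the hypotheses of Lemma~\ref{lem3.4}, which the exact-sequence argument above is meant to handle. I also need to check that the length bound used in the range $j\ge s+1$ holds for all weights, not just primitive ones, which follows from Lemma~\ref{2.14}. Once all three ranges are covered, Theorem~\ref{Serre} gives $H^n(\mathcal R_{\mathcal T},\mathcal R_{\mathcal T})=0$ for every $n$.
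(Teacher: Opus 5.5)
Your proposal is correct and follows the same route as the paper. The paper likewise treats degrees $j\ge s+1$ as trivial, deduces degree $s$ from Lemma~\ref{lem3.4}, and leaves degrees $j\le s-1$ to the hypothesis via Remark~\ref{rm2.2}. Your length argument for $j\ge s+1$ supplies a detail the paper's two-line proof leaves implicit. So does your exact sequence $0\to\mathcal N\to\mathcal R_{\mathcal T}\to\mathcal R_{\mathcal T}/\mathcal N\to 0$, which handles the weight $0$ of $\mathcal T$; that weight lies outside $W$, so Lemma~\ref{lem3.4} does not literally apply to $\mathcal R_{\mathcal T}$ as the paper uses it.
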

\begin{proof} Since $H^i(\cal N, \cal R_{\cal T})^{\cal T}=0$ for all $i\geq s+1$, it is sufficient to prove $H^s(\cal N, \cal R_{\cal T})^{\cal T}=0$, which immediately follows from Lemma \ref{lem3.4}.
\end{proof}

To facilitate the analysis of higher-order cohomology groups, we now present the following result.

\begin{cor}\label{cor3.6}
Suppose $\cal R_{\cal T}$ be a solvable Lie algebra with nilradical $\cal N$ of nilindex $(s+1)$ such that $H^i(\cal R_{\cal T}, \cal R_{\cal T})=0$ for all $0 \leq i \leq s-1$. Then $H(\cal R_{\cal T}, \cal R_{\cal T})=0$.
\end{cor}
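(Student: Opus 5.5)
We have to show that the whole adjoint cohomology $H^n(\mathcal R_{\mathcal T},\mathcal R_{\mathcal T})$ vanishes for every $n\ge 0$. Two earlier results will do the work: the Hochschild--Serre factorization (Theorem~\ref{Serre} together with Remark~\ref{rm2.2}) to pass to $\mathcal T$-invariant cohomology of $\mathcal N$, and Lemma~\ref{lem3.4} for the top degree $s$. The case $\mathcal M=\mathcal R_{\mathcal T}$ is allowed in Lemma~\ref{lem3.4}. Indeed, $\mathcal T$ acts diagonally on $\mathcal R_{\mathcal T}$: the weights on $\mathcal N$ lie in $W$, and $\mathcal T$ itself has weight $0$. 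The zero weight is not in $W$, so I will have to check that the lemma's argument still applies to the $\mathcal T$-component; this is discussed in the last paragraph.

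\emph{Step 1: degrees $0\le j\le s-1$.} By Remark~\ref{rm2.2}, the hypothesis $H^{s-1}(\mathcal R_{\mathcal T},\mathcal R_{\mathcal T})=0$ alone gives $H^j(\mathcal N,\mathcal R_{\mathcal T})^{\mathcal T}=0$ for all $0\le j\le s-1$.

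\emph{Step 2: degree $s$.} Here I apply Lemma~\ref{lem3.4} with $\mathcal M=\mathcal R_{\mathcal T}$ to get $H^s(\mathcal N,\mathcal R_{\mathcal T})^{\mathcal T}=0$.

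\emph{Step 3: degrees $j\ge s+1$.} I claim that every $\mathcal T$-invariant $j$-cochain with $j\ge s+1$ is zero. Such a cochain sends $(e_{\beta_1},\dots,e_{\beta_j})$ into the weight space of weight $\beta_1+\dots+\beta_j$. This sum has length at least $j\ge s+1$, since every weight has length at least $1$. But every weight of $\mathcal R_{\mathcal T}$ has length at most $s$: nonzero weights of $\mathcal N$ have length at most $s$ because $\mathcal N^{s+1}=0$ and, by Lemma~\ref{2.14}, a weight of length $\ell$ lies in $\mathcal N^{\ell}$; the only other weight is $0$. Hence the target weight space is zero and $C^j(\mathcal N,\mathcal R_{\mathcal T})^{\mathcal T}=0$. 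This is also the argument behind Proposition~\ref{prop4.11}.

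\emph{Conclusion.} Steps 1--3 give $H^j(\mathcal N,\mathcal R_{\mathcal T})^{\mathcal T}=0$ for all $j$. Feeding this into Theorem~\ref{Serre} gives $H^n(\mathcal R_{\mathcal T},\mathcal R_{\mathcal T})=0$ for every $n$.

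\emph{Main obstacle.} The delicate point is Step 2, because the module $\mathcal R_{\mathcal T}$ contains the zero-weight part $\mathcal T$, which is not in $W$. I would handle it with the filtration argument from the proof of Theorem~\ref{thm3.2}, reducing to one-dimensional subquotients. A subquotient of weight $\Lambda\in W$ is covered by Lemma~\ref{lem3.4}. A subquotient of weight $0$ receives no nonzero invariant $s$-cochain, since a sum of $s$ primitive weights is never $0$. Exactness of the sequences
\[
H^{s}(\mathcal N,\mathcal M')^{\mathcal T}\to H^{s}(\mathcal N,\mathcal M)^{\mathcal T}\to H^{s}(\mathcal N,\mathcal M/\mathcal M')^{\mathcal T}
\]
then yields the vanishing in degree $s$. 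If that reduction turned out to be awkward, I would instead argue directly: write an invariant $s$-cocycle on primitive arguments and take the same explicit coboundary as in the proof of Lemma~\ref{lem3.4}, using $c(\alpha_{i_p},\alpha_{i_q})\neq0$ for some pair in the tuple, which holds because the target weight lies in $W$.
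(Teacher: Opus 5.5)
Your proof is correct and follows the paper's route: Remark~\ref{rm2.2} below degree $s$, Lemma~\ref{lem3.4} in degree $s$, vanishing of invariant cochains above degree $s$ as in Proposition~\ref{prop4.11}, then Theorem~\ref{Serre}. Your filtration treatment of the weight-$0$ summand $\mathcal T$ fills a point the paper passes over when it applies Lemma~\ref{lem3.4} to $\mathcal M=\mathcal R_{\mathcal T}$, and the only slip is the word ``alone'' in Step~1: since $\wedge^i\mathcal T^*=0$ for $i>\dim\mathcal T$, $H^{s-1}(\mathcal R_{\mathcal T},\mathcal R_{\mathcal T})=0$ by itself controls only $j\ge s-1-\dim\mathcal T$, but the hypothesis for every $i\le s-1$ still gives all of Step~1 through the $\wedge^0\mathcal T^*$ summand.
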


Below we present an example that satisfies the assumptions of Corollary \ref{cor3.6}.
\begin{exa}
Let $\mathfrak n$ be the nilpotent Lie algebra with multiplication table
$$\left\{\begin{array}{lllll}
[e_1,e_j]&=&e_{j+1},& 2\le j\le n_1,\\[2mm]
[e_1,e_{n_1+\dots+n_{i}+j}]&=&e_{n_1+\dots+n_{i}+j+1},& 2\le j\le n_{i+1},& 1\le i\le k-1.
\end{array}\right.$$
where $2\le n_i\le 4$ for all $1\le i\le k$.

This Lie algebra, for arbitrary values of the parameters $n_i$, was introduced in \cite{Ancochea6} as the model nilpotent Lie algebra. In that paper it was shown that, for solvable extension $\mathcal R_{\mathcal T}$ of $\mathfrak n$, one has $H^{p}(\mathcal R_{\mathcal T},\mathcal R_{\mathcal T})=0$, $0\leq p \leq 3.$
Then the assertion of Corollary \ref{cor3.6} leads  
$H(\mathcal R_{\mathcal T},\mathcal R_{\mathcal T})=0.$
\end{exa}

\section{Cohomological rigidity of solvable Lie algebras of maximal rank 2}\label{sec4}

In this section, we continue our study of cohomological rigidity for maximal solvable extensions of nilpotent Lie algebras of maximal rank. 
Our attention is restricted to solvable Lie algebras $\mathcal R_{\mathcal T}$ whose nilradical is generated by two elements and has rank two. 
Thus, $\mathcal N = \langle e_{\alpha_{1}},\, e_{\alpha_{2}} \rangle.$ As in the proof of Theorem~\ref{thm3.2}, the general case of an $\mathcal R_{\mathcal T}$-module $\mathcal M$ reduces to the one-dimensional situation. 
Accordingly, we may assume without loss of generality that
$\mathcal M = \mathbb{C} m,$
where $m$ has weight $\Lambda$ with respect to the action of $\mathcal T$.

We denote by $c_{i,j}^{\,s,t}$ the structure constant determined by the product
{\large
\begin{align*}
[e_{i\alpha_{1}+j\alpha_{2}},\, e_{s\alpha_{1}+t\alpha_{2}}]
&= c_{i,j}^{\,s,t}\, e_{(i+s)\alpha_{1} + (j+t)\alpha_{2}}.
\end{align*}
}
and by $\Phi_{i,j}^{\,s,t}$ the coefficient associated with a cocycle 
$f \in H^{2}(\mathcal N,\mathcal M)^{\mathcal T}$, defined by
$$f(e_{i\alpha_{1}+j\alpha_{2}},\, e_{s\alpha_{1}+t\alpha_{2}})
=
\left \{ \begin{array}{ccl}
\Phi_{i,j}^{\,s,t}\, m, 
&& \text{if } (i+s)\alpha_{1} + (j+t)\alpha_{2} = \Lambda, \\[3mm]
0, 
&& \text{otherwise}.
\end{array}\right.$$

In addition, for a weight $\mu = p\alpha_{1} + q\alpha_{2} \in W$, we introduce the notation
$$\mathrm{diff}(\mu) = |p - q|.$$

\begin{lem}\label{lem4.1} Let $\cal R_{\cal T}$ be a solvable Lie algebra with nilradical $\mathcal{N}$ of nilindex at most $5$. Then $H^2(\cal N, \cal M)^{\cal T}=0$.
\end{lem}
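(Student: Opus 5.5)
Since $\mathcal N$ is generated by $e_{\alpha_1},e_{\alpha_2}$ and has nilindex at most $5$, every weight has the form $\Lambda=p\alpha_1+q\alpha_2$ with $p+q\le 4$. We may assume the target $\mathcal M=\mathbb C m$ is one-dimensional, of weight $\Lambda\in W$. Then we go through the possible $\Lambda$ by length and, in each case, either invoke Theorem~\ref{thm3.2} or build a primitive $g$ by hand.

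\emph{Length $\le 3$.} Length $1$ is trivial: no pair of weights sums to a primitive weight, so $f=0$. For length $2$, $\Lambda=\alpha_1+\alpha_2$ is covered by condition (iii) of Theorem~\ref{thm3.2}. For length $3$, $\Lambda=2\alpha_1+\alpha_2$ or $\alpha_1+2\alpha_2$ is again condition (iii). Note that in the rank-two setting $2\alpha_i$ is never a weight.

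\emph{Length $4$.} The possible weights are $3\alpha_1+\alpha_2$, $2\alpha_1+2\alpha_2$ and $\alpha_1+3\alpha_2$.

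For $\Lambda=3\alpha_1+\alpha_2$: the weight $\Lambda-\alpha_2=3\alpha_1$ is not in $W$, so $\Lambda=\alpha_1+(2\alpha_1+\alpha_2)$ satisfies condition (ii) with $\alpha_i=\alpha_1$. The case $\alpha_1+3\alpha_2$ is symmetric.

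For $\Lambda=2\alpha_1+2\alpha_2$: the only decompositions into two weights are $\alpha_1+(\alpha_1+2\alpha_2)$ and $\alpha_2+(2\alpha_1+\alpha_2)$, since $\alpha_1+\alpha_2$ cannot pair with itself in a $2$-cochain. So $f$ has just two parameters,
\[
\Phi_1=\Phi_{1,0}^{1,2},\qquad \Phi_2=\Phi_{0,1}^{2,1}.
\]
If only one of the two middle weights lies in $W$, condition (ii) applies directly. Otherwise we use the triple $(\alpha_1,\alpha_2,\alpha_1+\alpha_2)$ and the equations
\[
L(\alpha_1,\alpha_2,\alpha_1+\alpha_2)=Z(\alpha_1,\alpha_2,\alpha_1+\alpha_2)=0 .
\]
The term $c(\alpha_1,\alpha_2)c(\alpha_1+\alpha_2,\alpha_1+\alpha_2)$ vanishes, so these reduce to
\[
c(\alpha_2,\alpha_1+\alpha_2)\,c(\alpha_1+2\alpha_2,\alpha_1)+c(\alpha_1+\alpha_2,\alpha_1)\,c(2\alpha_1+\alpha_2,\alpha_2)=0,
\]
together with the same relation with $\Phi$ in place of the outer $c$'s. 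By (ii), all four products $[\mathcal N_\alpha,\mathcal N_\beta]$ involved are nonzero. Hence the pair $(\Phi_1,\Phi_2)$ is proportional to $\bigl(c(\alpha_1,\alpha_1+2\alpha_2),\,c(\alpha_2,2\alpha_1+\alpha_2)\bigr)$. Choosing $g(e_\Lambda)=-\Phi_1/c(\alpha_1,\alpha_1+2\alpha_2)$ then gives $f=d^1g$, exactly as in \eqref{eq7}.

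The main obstacle is this $2\alpha_1+2\alpha_2$ case, specifically checking the proportionality. We must make sure no other cocycle equations are needed and that the sign conventions in $Z$ match those in $d^1g$. If one of the structure constants $c(\alpha_2,\alpha_1+\alpha_2)$ or $c(\alpha_1,\alpha_1+\alpha_2)$ vanished, we would fall back to condition (ii); by assumption (ii) this can only happen when the corresponding weight is absent. Finally, combining the weights via the filtration argument in the proof of Theorem~\ref{thm3.2} gives $H^2(\mathcal N,\mathcal M)^{\mathcal T}=0$.
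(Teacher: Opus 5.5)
Your proof is correct and follows essentially the same route as the paper. For every weight $\Lambda\neq 2\alpha_1+2\alpha_2$, the vector $e_\Lambda$ arises from a single pair of weights, so a one-term $g$ works; you phrase this via conditions (ii)/(iii) of Theorem~\ref{thm3.2} weight by weight, whereas the paper notes the uniqueness of the product directly. For $\Lambda=2\alpha_1+2\alpha_2$, both arguments use $L(\alpha_1,\alpha_2,\alpha_1+\alpha_2)=Z(\alpha_1,\alpha_2,\alpha_1+\alpha_2)=0$ to get proportionality and then set $g(e_\Lambda)=-\Phi_{1,0}^{1,2}m/c_{1,0}^{1,2}$; your ``only one middle weight'' subcase is harmless, and in fact vacuous, since that same identity together with assumption (ii) forces $2\alpha_1+2\alpha_2\notin W$ there.
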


\begin{proof} Under the hypothesis on the nilindex, the set of all weights satisfies
$$W \subseteq \{ \alpha_1,\ \alpha_2,\ \alpha_1 + \alpha_2,\ 2\alpha_1 + \alpha_2,\ \alpha_1 + 2\alpha_2,\ 
3\alpha_1 + \alpha_2,\ \alpha_1 + 3\alpha_2,\ 2\alpha_1 + 2\alpha_2 \}.$$

Fix $\Lambda = (i+s)\alpha_{1} + (j+t)\alpha_{2} \in W \setminus \{2\alpha_1 + 2\alpha_2\}$, 
where $i\alpha_1 + j\alpha_2,\ s\alpha_1 + t\alpha_2 \in W$.  
Since the element $e_{\Lambda}$ can be obtained uniquely as the product 
{\large
\begin{align*}
e_{\Lambda} = [e_{i\alpha_1+j\alpha_2}, \, e_{s\alpha_1+t\alpha_2}],
\end{align*}
}
it follows that the associated structure constant $c_{i,j}^{\,s,t}$ is nonzero.

Define a cochain $g \in C^{1}(\mathcal N, \mathcal M)$ by setting
$$g(e_{\Lambda})=-\Phi_{i,j}^{s,t}m/c_{i,j}^{s,t} \quad \mbox{and} \quad g(e_{\rho})=0, \quad \rho\neq \Lambda.$$ 
A straightforward computation then yields $f = d^{1} g.$

Now suppose that $\Lambda = 2\alpha_1 + 2\alpha_2 \in W$.  
The equalities
$$L(\alpha_{1}, \alpha_{2}, \alpha_{1} + \alpha_{2}) = 0,
\qquad
Z(\alpha_{1}, \alpha_{2}, \alpha_{1} + \alpha_{2}) = 0,$$
imply 
$$c_{1,2}^{\,1,0} \, \Phi_{0,1}^{\,2,1}
=
c_{2,1}^{\,0,1} \, \Phi_{1,0}^{\,1,2},
\qquad \text{with} \qquad
c_{1,0}^{\,1,2} c_{0,1}^{\,2,1} \neq 0.
$$

Define again a cochain $g \in C^{1}(\mathcal N, \mathcal M)$ by
$$g(e_{\Lambda})=-{\Phi_{1,0}^{1,2}}m/{c_{1,0}^{1,2}} \quad \mbox{and} \quad g(e_{\rho})=0, \quad \rho\neq \Lambda,$$
It follows once more that $f = d^{1} g.$
In conclusion, every $2$-cocycle is a $2$-coboundary.
\end{proof}

Applying Remark \ref{rm2.2}, we obtain the following corollary.
\begin{cor} Let $\cal R_{\cal T}$ be a solvable Lie algebra with nilradical $\mathcal{N}$ of nilindex at most $5$. Then $\cal R_{\cal T}$ is cohomologically rigid.
\end{cor}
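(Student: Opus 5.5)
The corollary follows by combining Lemma~\ref{lem4.1} with the Hochschild--Serre reduction, in the same way as the corollary after Theorem~\ref{thm3.2}. I will apply Theorem~\ref{Serre} with $\mathcal G=\mathcal R_{\mathcal T}$, $\mathcal Q=\mathcal T$ and $\mathcal M=\mathcal R_{\mathcal T}$ (adjoint module). The hypotheses hold: $\mathcal T$ is abelian, acts diagonally on $\mathcal N$, and acts diagonally on $\mathcal R_{\mathcal T}$, since $[\mathcal T,\mathcal T]=0$ and each $\mathcal N_\alpha$ is a weight space. By Remark~\ref{rm2.2}, it then suffices to show $H^j(\mathcal N,\mathcal R_{\mathcal T})^{\mathcal T}=0$ for $j=0,1,2$.

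For $j=0,1$, I use the cited fact from \cite{super} that $H^i(\mathcal R_{\mathcal T},\mathcal R_{\mathcal T})=0$ for $i=0,1$ (completeness of $\mathcal R_{\mathcal T}$). Remark~\ref{rm2.2} then gives $H^0(\mathcal N,\mathcal R_{\mathcal T})^{\mathcal T}=H^1(\mathcal N,\mathcal R_{\mathcal T})^{\mathcal T}=0$.

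For $j=2$, I take $\mathcal M=\mathcal R_{\mathcal T}$ in Lemma~\ref{lem4.1}. The one step needing care is that the reduction to one-dimensional modules in Theorem~\ref{thm3.2} assumes all $\mathcal T$-weights on $\mathcal M$ lie in $W$, whereas $\mathcal R_{\mathcal T}$ also contains the weight-zero part $\mathcal T$. I handle this with the $\mathcal R_{\mathcal T}$-submodule $\mathcal N\subset\mathcal R_{\mathcal T}$ and the exact sequence
\[
H^2(\mathcal N,\mathcal N)^{\mathcal T}\to H^2(\mathcal N,\mathcal R_{\mathcal T})^{\mathcal T}\to H^2(\mathcal N,\mathcal R_{\mathcal T}/\mathcal N)^{\mathcal T}.
\]
The left term vanishes by Lemma~\ref{lem4.1}, since the weights of $\mathcal N$ lie in $W$. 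For the right term, the quotient $\mathcal R_{\mathcal T}/\mathcal N\cong\mathcal T$ is a trivial $\mathcal N$-module of weight $0$. A $\mathcal T$-invariant cochain must map $\mathcal N_\alpha\times\mathcal N_\beta$ into weight $\alpha+\beta$. Every weight in $W$ is a nonnegative combination of primitive weights, so $\alpha+\beta$ is a nonzero nonnegative combination of the linearly independent primitive weights, hence nonzero. Therefore every invariant $2$-cochain with values in $\mathcal T$ vanishes, and the right term is $0$. Exactness gives $H^2(\mathcal N,\mathcal R_{\mathcal T})^{\mathcal T}=0$.

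Combining the three cases with Remark~\ref{rm2.2} yields $H^2(\mathcal R_{\mathcal T},\mathcal R_{\mathcal T})=0$, so $\mathcal R_{\mathcal T}$ is cohomologically rigid. The only real obstacle is the weight-zero issue for $j=2$, and the weight argument above disposes of it.
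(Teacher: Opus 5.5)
Your proposal is correct and follows the paper's route: Lemma~\ref{lem4.1} supplies the vanishing of $H^2(\mathcal N,\mathcal M)^{\mathcal T}$, completeness of $\mathcal R_{\mathcal T}$ handles degrees $0$ and $1$, and Remark~\ref{rm2.2} finishes the argument. Your extra step, using the submodule $\mathcal N$ and the weight-zero quotient $\mathcal R_{\mathcal T}/\mathcal N\cong\mathcal T$, makes explicit a point the paper passes over silently, and it is sound: $\alpha+\beta\neq 0$ forces every invariant $\mathcal T$-valued $2$-cochain to vanish.
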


We now present the key lemmas that will be used in the proof of the main theorems. We begin by defining 
for each integer $i\ge 1$, the following sets:
$$W_i=\{\mu\in W \mid \mathrm{length}(\mu)\le i\}, \qquad
V_i=\{\mu\in W \mid \mathrm{length}(\mu)=i\}.$$

\begin{lem}\label{lem4.3} 
Let $3\alpha_1 + \alpha_2$ and $\alpha_1 + 3\alpha_2$ are not in $W$. Then the following properties hold:
\begin{itemize}
    \item[(i)] For any $\mu \in W_6$,  $\mathrm{diff}(\mu) \leq 1$.
    \item[(ii)] For any weights $\tau_1, \tau_2 \in W$ such that $\mathrm{length}(\tau_1) + \mathrm{length}(\tau_2) \leq 6$, if $\tau_1 + \tau_2 \in W$, then the Lie bracket $[e_{\tau_1}, e_{\tau_2}]$ vanishes if and only if $\mathrm{diff}(\tau_1) + \mathrm{diff}(\tau_2) = 0$.
\end{itemize}
\end{lem}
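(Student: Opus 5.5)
The plan is to first determine $W_6$ completely, using only that $\mathcal N$ is generated by $e_{\alpha_1},e_{\alpha_2}$. Then (ii) becomes a finite check of brackets, done with the Jacobi identity. Write $a=e_{\alpha_1}$, $b=e_{\alpha_2}$, $c=[a,b]$, and $e_{p,q}$ for a spanning vector of $\mathcal N_{p\alpha_1+q\alpha_2}$. By generation (equivalently, by Lemma~\ref{2.14}), $\mathcal N^{k}/\mathcal N^{k+1}$ is spanned by the classes of $[e_{\alpha_i},V_{k-1}]$. Hence every weight of length $k$ is a primitive weight plus a weight of length $k-1$, and the candidates in $V_k$ can be listed level by level.

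\emph{Enumeration of $W_6$.} Since $\dim\mathcal N_{\alpha_i}=1$, we have $[a,a]=0$, so $V_2=\{\alpha_1+\alpha_2\}$ and $V_3\subseteq\{2\alpha_1+\alpha_2,\alpha_1+2\alpha_2\}$. Excluding $3\alpha_1+\alpha_2$ and $\alpha_1+3\alpha_2$ leaves $V_4\subseteq\{2\alpha_1+2\alpha_2\}$ and $V_5\subseteq\{3\alpha_1+2\alpha_2,2\alpha_1+3\alpha_2\}$. The only danger for (i) is $V_6$, where $4\alpha_1+2\alpha_2$ (and symmetrically $2\alpha_1+4\alpha_2$) must be excluded.

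\emph{Excluding $4\alpha_1+2\alpha_2$.} Since $3\alpha_1+\alpha_2\notin W$, the only way to reach this weight is $[a,e_{3,2}]$. Likewise $e_{3,2}\in\mathbb C[a,e_{2,2}]$. By Jacobi, using $[c,c]=0$, $e_{2,2}$ is proportional to $[b,y]$ with $y=\mathrm{ad}_a^2 b$. The hypothesis $3\alpha_1+\alpha_2\notin W$ gives $\mathrm{ad}_a^3b=0$. Applying the Leibniz rule for the derivation $\mathrm{ad}_a$ gives
$$\mathrm{ad}_a^2[b,y]=[\mathrm{ad}_a^2b,y]+2[\mathrm{ad}_ab,\mathrm{ad}_ay]+[b,\mathrm{ad}_a^2y]=[y,y]+0+0=0.$$
Hence $4\alpha_1+2\alpha_2\notin W$. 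The symmetric argument with $a\leftrightarrow b$ excludes $2\alpha_1+4\alpha_2$. So $V_6\subseteq\{3\alpha_1+3\alpha_2\}$, and (i) follows by inspection.

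\emph{Proof of (ii).} Since $\mathrm{diff}\le1$ on $W_6$, the condition $\mathrm{diff}(\tau_1)+\mathrm{diff}(\tau_2)=0$ means $\tau_1,\tau_2\in\{\alpha_1+\alpha_2,2\alpha_1+2\alpha_2\}$. The only admissible bracket with total length $\le 6$ is then $[e_{1,1},e_{2,2}]$, besides the trivially zero $[c,c]$.

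For vanishing, write $e_{2,2}$ in both forms $[a,[b,c]]=[b,[a,c]]$ and expand $[c,\cdot\,]$ of each by Jacobi. Comparing the two expansions, together with the relations $\mathrm{ad}_a^3b=\mathrm{ad}_b^3a=0$ and the vanishing at weight $(4,2),(2,4)$ just proved, should force $[c,e_{2,2}]=0$. I expect this identity to be the main obstacle: the naive expansions produce terms $[[a,c],[b,c]]$ and $[a,[c,[b,c]]]$ that must be shown to cancel. If the direct comparison fails, I would also expand $[[a,c],[b,c]]$ via Jacobi on the triple $(a,c,[b,c])$.

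For the converse direction (diff sum $>0$), I list the remaining pairs: $\alpha_i$ with anything, $(1,1)$ with $(2,1)$ or $(1,2)$, $(2,1)$ with $(1,2)$, and $(1,1)$ with $(3,2)$ or $(2,3)$. For each pair whose sum lies in $W$, I show the bracket equals a nonzero multiple of the unique generator-chain producing $e_{\tau_1+\tau_2}$, by rewriting it through Jacobi into the form $[e_{\alpha_i},\cdot\,]$. Assumption (ii) of Leger--Luks gives this immediately wherever it is in force. Otherwise, the uniqueness of the chain in the enumeration above guarantees that the rewritten bracket is a nonzero multiple.
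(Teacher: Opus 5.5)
Your enumeration of $W_6$ and your exclusion of $4\alpha_1+2\alpha_2$ via $\mathrm{ad}_a^2[b,y]=0$ are correct. They are also cleaner than the paper's route through $L(\alpha_1,\alpha_1+\alpha_2,2\alpha_1+\alpha_2)=0$. However, there is a genuine gap exactly at the step you flagged: $[c,e_{2,2}]=0$ is not established, and the relations you list cannot establish it.

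Put $u=[a,c]$, $v=[b,c]$, $d=[a,v]=[b,u]$. Using $[a,u]=[b,v]=0$ you get $[c,u]=[a,d]$ and $[c,v]=-[b,d]$. Your two expansions then read
\[
[c,d]=[c,[a,v]]=-[u,v]-[a,[b,d]],\qquad [c,d]=[c,[b,u]]=[u,v]+[b,[a,d]].
\]
Comparing them only gives $2[u,v]=-[a,[b,d]]-[b,[a,d]]$. Your fallback, Jacobi on $(a,c,[b,c])$, just reproduces the first equation, and the vanishing at weights $(4,2),(2,4)$ plays no role. The missing ingredient is the Jacobi identity on $(a,b,d)$, namely $[c,d]=[[a,b],d]=[a,[b,d]]-[b,[a,d]]$. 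Adding the two displayed equations and inserting this gives $2[c,d]=-[c,d]$, i.e. $3[c,d]=0$, so $[c,d]=0$ over $\mathbb C$. These three Jacobi identities, together with the lower ones encoding $[a,v]=[b,u]$, $[c,u]=[a,d]$ and $[c,v]=-[b,d]$, are exactly the paper's table for $V_6$.

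The converse direction at $3\alpha_1+3\alpha_2$ inherits this gap, and your justification there is inaccurate. The space $\mathcal N_{3\alpha_1+3\alpha_2}=[a,\mathcal N_{2\alpha_1+3\alpha_2}]+[b,\mathcal N_{3\alpha_1+2\alpha_2}]$ is spanned by two chains, $[a,[b,d]]$ and $[b,[a,d]]$, not a unique one. So a priori one of them, or $[u,v]$, could vanish while $3\alpha_1+3\alpha_2\in W$. Only after $[c,d]=0$ do the displayed relations give $[a,[b,d]]=[b,[a,d]]=-[u,v]$. From that, all three brackets with positive diff-sum are nonzero exactly when $3\alpha_1+3\alpha_2\in W$.

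You also cannot fall back on the surjectivity part of Leger--Luks (ii). Whenever $3\alpha_1+3\alpha_2\in W$, it contradicts the very conclusion $[\mathcal N_{\alpha_1+\alpha_2},\mathcal N_{2\alpha_1+2\alpha_2}]=0$ (cf. Example~\ref{exa3.7}), so the direct argument must do all the work. At lengths $\le 5$ your rewriting argument is fine: $[a,v]=[b,u]$ spans $\mathcal N_{2\alpha_1+2\alpha_2}$, and $[c,u]=[a,d]$ spans $\mathcal N_{3\alpha_1+2\alpha_2}$.
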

\begin{proof} It is straightforward to verify that the lemma holds for all weights  $\mu \in V_i,\ i\le 3$. By a routine inductive argument together with the observation that $3\alpha_{1} + \alpha_{2}, \alpha_{1} + 3\alpha_{2}$ do not lie in $W$, one deduces that
$$n\alpha_{1} + \alpha_{2}, \quad \alpha_{1} + n\alpha_{2} \notin V_{n+1} \quad \text{for all } n \ge 4.$$

Thus, we need to prove the claim for $V_k$ with $4 \leq k \leq 6$.

{Case $V_4$}: It is clear that  $\mathrm{diff}(\mu) = 0$ for the weight $\mu = 2\alpha_1 + 2\alpha_2$; hence  $\mu$ satisfies property (i). The equality $L( \alpha_1, \alpha_2, \alpha_1 + \alpha_2)=0$ implies that both coefficients $c_{2,1}^{0,1}$ and $c_{1,2}^{1,0}$ are non-zero. Consequently,  $\mathrm{diff}(\tau_1) + \mathrm{diff}(\tau_2) = 2$ in the cases where $(\tau_1,\tau_2)$ is either $(2\alpha_1+\alpha_2,\alpha_2)$ or $(\alpha_1+2\alpha_2,\alpha_1)$. 

{Case $V_5$}: We have $V_5 \subseteq \{3\alpha_1 + 2\alpha_2, 2\alpha_1 + 3\alpha_2\}$. Hence, $\mathrm{diff}(\mu) = 1$ for every $\mu \in V_5$. 
Assume that  $3\alpha_1 + 2\alpha_2 \in V_5$. Since the element $e_{3\alpha_1 + 2\alpha_2}$ can only arise from one of the  brackets $$[e_{\alpha_1}, e_{2\alpha_1 + 2\alpha_2}]\quad [e_{\alpha_1 + \alpha_2}, e_{2\alpha_1 + \alpha_2}],$$ then in both cases we have $\mathrm{diff}(\tau_1) + \mathrm{diff}(\tau_2) = 1$. 

The case $2\alpha_{1} + 3\alpha_{2} \in V_{5}$ verifies the claims of the lemma by symmetry of $\alpha_{1}$ and $\alpha_{2}$.

{Case $V_6$}: Since $V_6 \subseteq \{4\alpha_1 + 2\alpha_2, 2\alpha_1 + 4\alpha_2, 3\alpha_1 + 3\alpha_2\}$, we consider the possible cases.

The equality $L(\alpha_1,\alpha_1 + \alpha_2,2\alpha_1 + \alpha_2)=0$ forces $c^{3,2}_{1,0} = 0$. However, $4\alpha_1 + 2\alpha_2\in V_6$ if and only if $c_{1,0}^{3,2}\neq0$. This contradiction shows $4\alpha_1 + 2\alpha_2 \notin V_6$. By symmetry, we also obtain $2\alpha_1 + 4\alpha_2 \notin V_6$.

Let $3\alpha_1 + 3\alpha_2 \in W_6$. This implies that at least one of the following structure constants is non-zero:
$$c_{1,1}^{2,2},\quad c_{1,0}^{2,3},\quad c_{0,1}^{3,2},\quad c_{2,1}^{1,2}.$$

We consider the Jacobi identity corresponding to various decompositions of the weights and the resulting constraints.
\begin{center}\begin{tabular}{clllllllll}
       \qquad\quad  Jacobi identity& & & &\qquad Constrains\\
        \hline \hline
\\                  
        $L(\alpha_1,\alpha_2,2\alpha_1+2\alpha_2)$&$=$&$0$ &$\Rightarrow $ &    $c_{1,1}^{2,2}c_{1,0}^{0,1}=c_{1,0}^{2,3}c_{0,1}^{2,2}+c_{0,1}^{3,2}c_{2,2}^{1,0},$\\
        \\$L(\alpha_1,\alpha_1+\alpha_2,\alpha_1+2\alpha_2)$&$=$&$0$ & $\Rightarrow $ &             $c_{1,1}^{2,2}c_{1,0}^{1,2}=c_{1,0}^{2,3}c_{1,1}^{1,2}+c_{1,2}^{2,1}c_{1,0}^{1,1},$\\
        
        \\$ L(\alpha_2,\alpha_1+\alpha_2,2\alpha_1+\alpha_2)$&$=$&$0$&$\Rightarrow $&          $c_{1,1}^{2,2}c_{0,1}^{2,1}=c_{0,1}^{3,2}c_{1,1}^{2,1}+c_{2,1}^{1,2}c_{0,1}^{1,1},$\\
        
        \\$L(\alpha_1,\alpha_2,\alpha_1+2\alpha_2)$&$=$&$0$ &$\Rightarrow $ &         $c_{1,1}^{1,2}c_{1.0}^{0,1}+{c_{1,2}^{1,0}c_{2,2}^{0,1}}=0,$\\
        
       \\$L(\alpha_1,\alpha_2,2\alpha_1+\alpha_2)$&$=$&$0$ & $\Rightarrow $ &   $c_{1,1}^{2,1}c_{1.0}^{0,1}+c_{0,1}^{2,1}c_{2,2}^{1,0}=0,$\\
        \\$L(\alpha_1,\alpha_2,\alpha_1+\alpha_2)$&$=$&$0$ & $\Rightarrow $ &        $c_{1,1}^{0,1}c_{1,0}^{1,2}+c_{1,1}^{1,0}c_{2,1}^{0,1}=0.$\\
\end{tabular}
\end{center}

The analysis of obtained constraints shows that $c_{1,1}^{2,2} = 0$. Moreover, if any one of the constants $c_{1,0}^{2,3}$, $c_{0,1}^{3,2}$, or $c_{2,1}^{1,2}$ vanishes, then all must vanish. This would contradict the assumption that $3\alpha_{1} + 3\alpha_{2} \in V_{6}$. Hence, all three constants are nonzero, which ensures that $3\alpha_{1} + 3\alpha_{2} \in W_{6}$ and that it satisfies properties (i) and (ii).
\end{proof}

We now state the auxiliary lemma, which plays a crucial role in the proofs of the principal result of this section.

\begin{lem}\label{lem4.4} Let $3\alpha_1 + \alpha_2$ and $\alpha_1 + 3\alpha_2$ are not in $W$. Then the following properties hold:
\begin{itemize}
    \item[(i)] For any $\mu \in W$,  $\mathrm{diff}(\mu) \leq 1$.
    \item[(ii)] For any weights $\tau_1, \tau_2 \in W$ the Lie bracket $[e_{\tau_1}, e_{\tau_2}]$ vanishes if and only if $\mathrm{diff}(\tau_1) + \mathrm{diff}(\tau_2) = 0$.
\end{itemize}
\end{lem}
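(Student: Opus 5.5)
We argue by induction on the length, with Lemma~\ref{lem4.3} as the base case: both (i) and (ii) already hold for all weights of length at most $6$, and for pairs $\tau_1,\tau_2$ with $\mathrm{length}(\tau_1)+\mathrm{length}(\tau_2)\le 6$. Fix $k\ge 7$ and assume (i) holds on $W_{k-1}$ and (ii) holds for all pairs whose total length is at most $k-1$. The goal is to show that every $\mu\in V_k$ has $\mathrm{diff}(\mu)\le 1$, and that for $\tau_1+\tau_2=\mu$ the bracket $[e_{\tau_1},e_{\tau_2}]$ vanishes exactly when both summands have $\mathrm{diff}=0$.

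\textbf{Step 1 (property (i)).} By Lemma~\ref{2.14}, any $\mu\in V_k$ can be written as $\mu=\alpha_i+\beta$ with $\beta\in V_{k-1}$ and $c(\alpha_i,\beta)\neq0$. By the induction hypothesis $\mathrm{diff}(\beta)\le1$, so $\mathrm{diff}(\mu)\le 2$. The only thing to exclude is $\mathrm{diff}(\mu)=2$. This case means $\mathrm{diff}(\beta)=1$ and $\alpha_i$ is added on the side that already dominates; for instance $\beta=(p+1)\alpha_1+p\alpha_2$ and $\mu=(p+2)\alpha_1+p\alpha_2$.

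Writing $\beta=\alpha_1+(p\alpha_1+p\alpha_2)$, we note that the bracket $[e_{\alpha_1},e_{p\alpha_1+p\alpha_2}]$ has diff-sum $1$, so it is nonzero by (ii). We then imitate the $V_6$ computation of Lemma~\ref{lem4.3} and apply the Jacobi identity
\[
L\bigl(\alpha_1,\alpha_1,p\alpha_1+p\alpha_2\bigr)\quad\text{or}\quad L\bigl(\alpha_1,\alpha_1+\alpha_2,\ p\alpha_1+(p-1)\alpha_2\bigr)=0.
\]
These identities express $c(\alpha_1,\beta)$ through products involving the weights $2\alpha_1+\alpha_2$ and $(p+1)\alpha_1+(p-1)\alpha_2$. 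The first is harmless. The second has diff $2$ and length $2p<k$, so it is not a weight by the induction hypothesis. The decisive use is that $[e_{\alpha_1},e_{\alpha_1}]=0$ together with the antisymmetric structure of $L$. More generally, for each decomposition $\beta=\tau+\sigma$ with $[e_\tau,e_\sigma]\ne0$ we run $L(\alpha_1,\tau,\sigma)$: every term other than $c(\alpha_1,\beta)c(\tau,\sigma)$ contains a bracket landing on a weight of diff $2$ and length $<k$, or a bracket $[e_{\alpha_1},\cdot]$ into such a weight. All these vanish by induction. This forces $c(\alpha_1,\beta)=0$, contradicting $c(\alpha_1,\beta)\ne 0$. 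The symmetric case follows by interchanging $\alpha_1$ and $\alpha_2$.

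\textbf{Step 2 (property (ii)).} Let $\mu\in V_k$ with $\mu=\tau_1+\tau_2$. If both $\tau_j$ have diff $0$, then $\mu=m(\alpha_1+\alpha_2)$ and $\tau_j=m_j(\alpha_1+\alpha_2)$. In this case we show $[e_{\tau_1},e_{\tau_2}]=0$ by induction on $\min(m_1,m_2)$. We write $e_{\tau_1}$ as a bracket $[e_{\alpha_1},e_{\tau_1-\alpha_1}]$ and use $L(\alpha_1,\tau_1-\alpha_1,\tau_2)$. The remaining brackets are then a $[\,\mathrm{diff}\,1,\mathrm{diff}\,1]$ bracket and an $[\alpha_1,\cdot]$ bracket; we relate these through a second identity $L(\alpha_2,\cdot,\cdot)$, exactly as the table in the $V_6$ case produced $c_{1,1}^{2,2}=0$.

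If the diff-sum is positive, it equals $1$ or $2$ by (i). We choose a decomposition of $\mu$ certified as nonzero by Lemma~\ref{2.14}, and propagate non-vanishing to the given pair $(\tau_1,\tau_2)$ via a Jacobi identity $L(\alpha_i,\tau_1-\alpha_i,\tau_2)$. In that identity all other terms are either known to be nonzero by induction or known to vanish by the diff-$0$ case, so the three-term relation forces $c(\tau_1,\tau_2)\neq0$.

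\textbf{Main obstacle.} The hardest part is the bookkeeping in these Jacobi identities. For general $k$, we must ensure that in each chosen identity exactly one unknown term survives, and that the diff-$0$ vanishing and the diff-positive nonvanishing can be proved simultaneously without circularity. We would handle this by ordering the induction lexicographically: first by $k$, then by $\min(\mathrm{length}\,\tau_1,\mathrm{length}\,\tau_2)$.
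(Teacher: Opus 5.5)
\textbf{Step 1 fails.} By induction, all shorter weights have $\mathrm{diff}\le 1$. So every decomposition $\beta=(p+1)\alpha_1+p\alpha_2=\tau+\sigma$ has $\{\tau,\sigma\}=\{q\alpha_1+(q-1)\alpha_2,\ r(\alpha_1+\alpha_2)\}$ with $q+r=p+1$. In $L(\alpha_1,\tau,\sigma)$ one of the two non-target terms does vanish by induction. The other is $\pm c(r(\alpha_1+\alpha_2),\alpha_1)\,c((r+1)\alpha_1+r\alpha_2,\ q\alpha_1+(q-1)\alpha_2)$. This is a bracket of two diff-$1$ weights landing on $\mu$ itself, of length $k$, and the induction hypothesis says nothing about it. The term you call harmless, $c(\alpha_1,\alpha_1+\alpha_2)\,c(2\alpha_1+\alpha_2,\ p\alpha_1+(p-1)\alpha_2)$, is exactly of this type. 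Also, $L(\alpha_1,\alpha_1,\cdot)$ is identically zero, so $[e_{\alpha_1},e_{\alpha_1}]=0$ contributes nothing.

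For $k=2t+2$ these identities only show that the numbers $x_p=c(p\alpha_1+(p-1)\alpha_2,\ (t+2-p)\alpha_1+(t+1-p)\alpha_2)$ are nonzero multiples of one another. This is what the paper extracts: they vanish simultaneously or not at all. Killing them needs more. For $t$ even, antisymmetry $x_p=-x_{t+2-p}$ gives $x_{(t+2)/2}=0$. For $t$ odd there is no self-paired term, and two chains must be compared. For $\mu=5\alpha_1+3\alpha_2$, the identities $L(\alpha_1,2\alpha_1+\alpha_2,2\alpha_1+2\alpha_2)$ and $L(\alpha_1,3\alpha_1+2\alpha_2,\alpha_1+\alpha_2)$ give $x_2=-\rho x_1$ and $x_2=\rho' x_1$, where $\rho=c(2\alpha_1+\alpha_2,2\alpha_1+2\alpha_2)/c(2\alpha_1+2\alpha_2,\alpha_1)$ and $\rho'=c(3\alpha_1+2\alpha_2,\alpha_1+\alpha_2)/c(\alpha_1+\alpha_2,\alpha_1)$. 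It is $L(\alpha_1,\alpha_1+\alpha_2,2\alpha_1+2\alpha_2)$, together with the diff-$0$ vanishing $[e_{\alpha_1+\alpha_2},e_{2\alpha_1+2\alpha_2}]=0$, that yields $\rho=\rho'\neq 0$ and hence $x_1=0$. So (i) at even length depends on the diff-$0$ half of (ii) at lower length, which your argument never uses.

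\textbf{The diff-$0$ half of Step 2 is missing its key idea.} The identity $L(\alpha_1,(m_1-1)\alpha_1+m_1\alpha_2,\ m_2(\alpha_1+\alpha_2))$ ties $c(m_1(\alpha_1+\alpha_2),m_2(\alpha_1+\alpha_2))$ to brackets $[e_{i\alpha_1+(i-1)\alpha_2},e_{(j-1)\alpha_1+j\alpha_2}]$ on the \emph{same} weight. These are unknowns, not cases of smaller $\min(m_1,m_2)$. So your induction on $\min(m_1,m_2)$ reduces nothing, and one extra identity $L(\alpha_2,\cdot,\cdot)$ cannot close the system.

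Already at $3\alpha_1+3\alpha_2$, the unknowns $c_{1,1}^{2,2},c_{1,0}^{2,3},c_{0,1}^{3,2},c_{2,1}^{1,2}$ satisfy exactly the three relations in the first rows of the table in Lemma~\ref{lem4.3}. Any two of them leave $c_{1,1}^{2,2}$ free. Only all three together force it to vanish; after normalizing the lower constants they combine to $3c_{1,1}^{2,2}=0$.

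In general you must take all Jacobi identities at weight $(t+1)(\alpha_1+\alpha_2)$ and prove the resulting linear system is nondegenerate. That is the bulk of the paper's proof: the systems \eqref{eq13}--\eqref{eq14} and the elimination ending in $(s_r(j_0+2)+r-1)U_{1,1}=0$ with a positive integer coefficient. It is not mere bookkeeping.

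The diff-positive half of Step 2 is essentially the paper's argument. This holds provided you choose the Jacobi identity so that every pair is linked to one fixed nonzero pair, not just to its mirror image.
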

\begin{proof} We proceed the proof by induction on index $s$ in $V_s$.  For any $\mu \in V_s$ with $s \leq 5$, the proofs (i) and (ii) follows from Lemma \ref{lem4.3}. 

Assuming that properties (i) and (ii) hold for all weights in $W_s$ and prove them for all weights in $W_{s+1}$. We distinguish odd and even values of $s+1.$ 

\textbf{Case  $s+1=2t+1$}. 
Taking in mind induction assumption, we need to prove lemma for the weights $\mu$ in $V_{2t+1}$. Obviously, we have 
\begin{equation}\label{eq8}
V_{2t+1}\subseteq\{(t+1+i)\alpha_1+(t-i)\alpha_2,\ \ (t-i)\alpha_1+(t+1+i)\alpha_2 \ | \ 0\leq i \leq t-1\}.
\end{equation}

\textbf{Part (i)}: Assume the contrary, that is, that there exists $\mu\in V_{2t+1}$ with $\mathrm{diff}(\mu)> 1$. Then due to \eqref{eq8} we conclude $\mathrm{diff}(\mu) \geq 3$. Consider a  decomposition $\mu=\mu_1+\mu_2$ with $\mu_1, \mu_2\in W$. Then $\mathrm{length}(\mu_i)\leq 2t$ for $i=1,2$. 

By induction assumption it follows that $\mathrm{diff}(\mu_i)\leq 1$ for $i=1,2.$ Then inequality $\mathrm{diff}(\mu)\leq \mathrm{diff}(\mu_1)+\mathrm{diff}(\mu_2)$ leads to the contradiction.

\textbf{Part (ii)}: From Part (i) for $s+1=2t+1$ we conclude
\begin{equation}\label{eq9}
V_{2t+1}\subseteq\{(t+1)\alpha_1+t\alpha_2,\ \ t\alpha_1+(t+1)\alpha_2\}.
\end{equation}

The embedding \eqref{eq9} deduces that $\mathrm{diff}(\tau_1)+\mathrm{diff}(\tau_2)\neq0$, if $\tau_1+\tau_2\in V_{2t+1}$. 

Since $V_{2t+1}\neq\emptyset$, at least one of  structure constants $\{c_{p,p-1}^{t-p+1,t-p+1},\ c_{p-1,p}^{t-p+1,t-p+1}|\ 1\leq p \leq t\}$ must be non-zero.

By considering the equalities:
$$\left\{\begin{array}{llll}
L\left(\alpha_1,p\alpha_1+(p-1)\alpha_2,(t-p)\alpha_1+(t-p+1)\alpha_2\right)=0,\\[3mm]
L\left(\alpha_2,(p-1)\alpha_1+p\alpha_2,(t-p+1)\alpha_1+(t-p)\alpha_2\right)=0,\end{array}\right.$$
for $2\le p\le t$, we get 
\begin{equation} \label{eq10} \left \{ \begin{array}{lll}
c_{t-p,t-p+1}^{1,0}c_{t-p+1,t-p+1}^{p,p-1}=c^{p,p-1}_{t-p,t-p+1}c_{t,t}^{1,0}, \\[3mm]
c_{t-p+1,t-p}^{0,1}c_{t-p+1,t-p+1}^{p-1,p}=c^{p-1,p}_{t-p+1,t-p}c_{t,t}^{0,1},
\end{array}\right.
\end{equation}

The coefficients $c_{t-p,t-p+1}^{1,0}$, $c^{p,p-1}_{t-p,t-p+1}$ $c_{t-p+1,t-p}^{0,1}$ and $c^{p-1,p}_{t-p+1,t-p}$ are all non-zero due to the inductive hypothesis. 

Therefore, from \eqref{eq10} we conclude that $(t+1)\alpha_1+t\alpha_2\in V_{2t+1}$ if and only if $c_{t-p+1,t-p+1}^{p,p-1}\neq0$ for all $1\le p\le t.$ Due to symmetricity between $\alpha_1$ and $\alpha_2$ we get that $t\alpha_1+(t+1)\alpha_2\in V_{2t+1}$ if and only if $c_{t-p+1,t-p+1}^{p-1,p}\neq0$ for all $1\le p\le t.$ 
Thus, we have proved that $[e_{\tau_1}, e_{\tau_2}] \neq 0$ if and only if $\tau_1+\tau_2\in V_{2t+1}$, completing the proof of (ii) for $s= 2t+1$.

\textbf{Case  $s+1=2t+2$}. Due to induction  hypothesis and previous case, we deduce 
$$V_{2t+2} \subseteq \{(t+1+i)\alpha_1+(t+1-i)\alpha_2,\ \ (t+1-i)\alpha_1+(t+1+i)\alpha_2 \ | \ 0\leq i \leq t\}.$$

\textbf{Part (i)}: Suppose \(\mu \in V_{2t+2}\) with \(\mathrm{diff}(\mu) \geq 2\). First, we assume that $\mathrm{diff}(\mu) > 2$, that is, $\mathrm{diff}(\mu) \geq 4$. For a decomposition \(\mu = \mu_1 + \mu_2\), we derive $\mathrm{length}(\mu_i) \leq 2t+1$, i.e., $\mu_1, \mu_2 \in W_{2t+1}$. Applying induction hypothesis and previous case, we get $\mathrm{diff}(\mu_i) \leq 1$. The inequality 
$\mathrm{diff}(\mu)\leq \mathrm{diff}(\mu_1)+\mathrm{diff}(\mu_2)$ implies that $\mathrm{diff}(\mu) > 2$ is not possible. 

Let $\mathrm{diff}(\mu) = 2$. Then we have 
\begin{equation}\label{eq11}
V_{2t+2} \subseteq \{ (t+2)\alpha_1+t\alpha_2,\ \ 
t\alpha_1+(t+2)\alpha_2,\ \ 
(t+1)\alpha_1+(t+1)\alpha_2\}.
\end{equation}

Suppose $\mu = (t+2)\alpha_1 + t\alpha_2 \in V_{2t+2}$. Note that the element $e_{(t+2)\alpha_1+t\alpha_2}$ can be obtained only by products of the form:
\begin{equation}\label{eq12}
[e_{p\alpha_1+(p-1)\alpha_2},e_{(t+2-p)\alpha_1+(t+1-p)\alpha_2}]=c_{p,p-1}^{t+2-p,t+1-p}e_{(t+2)\alpha_1+t\alpha_2}, \ \ 1\leq p \leq t.
\end{equation}

By considering 
$$L(\alpha_1,p\alpha_1+(p-1)\alpha_2,(t-p+1) \alpha_1+(t-p+1)\alpha_2)=0, \quad 2\le p\le t,$$
we obtain the following relations
$$c_{t+1-p,t+1-p}^{1,0}
c_{t+2-p,t+1-p}^{p,p-1}=c^{p,p-1}_{t+1-p,t+1-p}
c_{t+1,t}^{1,0}.$$
From the induction assumption we have $c_{t+1-p,t+1-p}^{1,0}\neq0$ and $c^{p,p-1}_{t+1-p,t+1-p}\neq0$.  It follows that $\mu\in V_{2t+2}$ if and only if $c_{t+2-p,t+1-p}^{p,p-1}\neq 0$ for all $1\leq p \leq t.$ 

Applying for the equalities
$$\left\{\begin{array}{llll}
L(\alpha_1,\alpha_1+\alpha_2,t\alpha_1+(t-1)\alpha_2))&=&0,\\[3mm]
L(\alpha_1,2\alpha_1+\alpha_2,(t-1)\alpha_1+(t-1)\alpha_2)&=&0,
\end{array}\right.$$ 
the same arguments as in Part (ii) for $s+1=2t+1$, one can obtain $\mu \notin W$. By symmetry reason, we  exclude $\mu=t\alpha_1 + (t+2)\alpha_2$. 
Therefore, $V_{2t+2} \subseteq  \{(t+1)\alpha_1 + (t+1)\alpha_2\}$, and (i) holds for $W_{2t+2}$. 

\textbf{Part (ii)}: Let $\tau_1+\tau_2\in V_{2t+2}$ and $\mathrm{diff}(\tau_1)+\mathrm{diff}(\tau_2)=0.$ We fix $k$ from interval $1 \leq k < [\frac{t+1}{2}].$

Note that weights 
$$(t-1)\alpha_1+(t+1)\alpha_2, \ (t-2k+3)\alpha_1+(t-2k+1)\alpha_2, \ (t-2i-1)\alpha_1+(t-2i+1)\alpha_2$$
have $\mathrm{diff}$ more than $1$ and $\mathrm{length}$ less than $2t+1$. Hence, by induction assumption they are not in $W$. Therefore, considering the equalities
$$ L\big(a\alpha_1+b\alpha_2,\ c\alpha_1+d\alpha_2,\ f\alpha_1+g\alpha_2\big)=0$$
for the following choices of triples $\{(a,b),(c,d),(f,g)\}$:

$$\begin{array}{cccc}
\{(1,0),\ (k-1,k),\ (t-k,\ t+1-k)\},\\[3mm]
\{(i,i-1),\ (k-i,k-i+1),\ (t-2k+i+1,\ t-2k+i)\}\\[3mm]
\{(i+1,i),\ (k-i-1,k-i),\ (t-k-i,\ t-k-i+1)\},\\[3mm]
\{(k,k),\ (j,j-1),\ (t-2k-j+1,\ t-2k-j+2)\},
\end{array}$$
with $1\le i\le k-1$ and $1\le j\le t+1-2k$,
we obtain the system of equalities 

\begin{equation}\label{eq13}
\left\{\begin{array}{llllllll}
c_{k,k}^{\,t-k,t-k+1}
&=& -\dfrac{c_{t-k,t-k+1}^{\,1,0}\, c_{t-k+1,t-k+1}^{\,k-1,k}}{c_{1,0}^{\,k-1,k}}, \\[3mm]
c_{k,k}^{\,t-2k+i+1,t-2k+i}
&=& -\dfrac{c_{k-i,k-i+1}^{\,t-2k+i+1,t-2k+i}\, c_{t-k+1,t-k+1}^{\,i,i-1}}
{c_{i,i-1}^{\,k-i,k-i+1}}, &\qquad 1\leq i \leq k-1, \\[3mm]
c_{k,k}^{\,t-k-i,t-k-i+1}&=& -\dfrac{c_{t-k-i,t-k-i+1}^{\,i+1,i}\, c_{t-k+1,t-k+1}^{\,k-i-1,k-i}}{c_{i+1,i}^{\,k-i-1,k-i}}, 
&\qquad 1\leq i \leq k-1, \\[3mm]
c_{k,k}^{\,t-2k-i+1,t-2k-i+2}&=& \dfrac{c_{k,k}^{\,i,i-1}\, c_{k+i,k+i-1}^{\,t-2k-i+1,t-2k-i+2}}{c_{t-k-i+1,t-k-i+2}^{\,i,i-1}}, 
&\qquad 1\leq i \leq t-2k+1.
\end{array}\right.
\end{equation}

To simplify the expressions arising in the subsequent computations, we introduce the following notations:
\[\begin{array}{lllllll}
U_{s,r} &:=& s\, c_{t-k+1,t-k+1}^{\,1,0}\, c_{t-k+2,t-k+1}^{\,k-1,k} 
+ r\, c_{k-1,k}^{\,t-k+1,t-k+1}\, c_{t,t+1}^{\,1,0}, & s, r \in \mathbb{N},\\[3mm] 
C_{p,q} &:=& \dfrac{c_{p,p-1}^{\,t-k-p+1,t-k-p+2}\, c_{t-k+1,t-k+1}^{\,q,q-1}}
{c_{q,q-1}^{\,k-q,k-q+1}}\, c_{t-k+q+1,t-k+q}^{\,k-q,k-q+1}.
\end{array}\]

Next, from the system of equalities valid for $1 \le i \le k$ and $1 \le j \le t+1-k$,
$$\left\{
\begin{array}{l}
L\!\big((t-k+1)\alpha_1+(t-k+1)\alpha_2,\; i\alpha_1+(i-1)\alpha_2,\; (k-i)\alpha_1+(k+1-i)\alpha_2\big)=0,\\[2mm]
L\!\big(k\alpha_1+k\alpha_2,\; j\alpha_1+(j-1)\alpha_2,\; (t-k+1-j)\alpha_1+(t-k+2-j)\alpha_2\big)=0,
\end{array}
\right.$$
we obtain, for $2 \le i \le k$,
\begin{equation}\label{eq14}
\left\{
\begin{aligned}
&U_{1,1}+c_{1,0}^{\,k-1,k}\, c_{k,k}^{\,t-k+1,t-k+1}=0, \\[5mm]
&U_{1,1}+\frac{c_{1,0}^{\,k-1,k}}{c_{i,i-1}^{\,k-i,k-i+1}}\!\left(c_{t-k+1,t-k+1}^{\,i,i-1}\, c_{t-k+i+1,t-k+i}^{\,k-i,k-i+1}
+c_{k-i,k-i+1}^{\,t-k+1,t-k+1}\, c_{t-i+1,t-i+2}^{\,i,i-1}
\right)=0, \\[5mm]
&U_{1,1}+\frac{c_{1,0}^{\,k-1,k}}{c_{j,j-1}^{\,t-k-j+1,t-k-j+2}}\!\left(c_{k,k}^{\,j,j-1}\, c_{k+j,k+j-1}^{\,t-k-j+1,t-k-j+2}
+c_{t-k-j+1,t-k-j+2}^{\,k,k}\, c_{t-j+1,t-j+2}^{\,j,j-1}
\right)=0.
\end{aligned}
\right.
\end{equation}

For the sake of convenience we 
denote the first, the second and the third types of equalities of \eqref{eq14} by $A_1$, $A_i$ and $B_j$, respectively. 

Next we are going to analyze the system   \eqref{eq14} by the following steps:

{\bf Step 1}: Substituting the coefficient $c_{k,k}^{t-k+1,t-k+1}$ from $A_1$ into $B_1$, we obtain
\begin{equation}\label{eq15}
c_{k,k}^{1,0}c_{k+1,k}^{t-k,t-k+1}c_{t-k+1,t-k+1}^{k-1,k}+c_{k,k}^{t-k,t-k+1}U_{1,2}=0.
\end{equation}

Then, using \eqref{eq15} as the inductive base in the variable~$j$, and invoking the fourth equality in~\eqref{eq13} at each inductive step, we obtain the following general expression for $B_{jk+1}$, valid for all
$0 \le j \le \frac{t+1-i}{k}:$
\begin{equation}\label{eq16} 
c^{(j+1)k+1,(j+1)k}_{t-(j+1)k,t-(j+1)k+1}=\frac{c_{jk+1,jk}^{t-(j+1)k,t-(j+1)k+1}}{c_{1,0}^{k-1,k}c_{k,k}^{jk+1,jk}}U_{j+1,j+2}.
\end{equation}

{\bf{Step 2}}: Extracting the coefficient $c_{t-i+1,\, t-i+2}^{\,i,\, i-1}$ from equality $A_i$ for all $2 \le i \le k$, we substitute it into corresponding $B_i$ together with the third equality in~\eqref{eq13}. The we obtain equality of $B_i$ in the form

\begin{equation}\label{eq17}
c^{k+i,k+i-1}_{t-k-i+1, t-k-i+2}
=\frac{2 c_{i,i-1}^{t-k-i+1,t-k-i+2} U_{1,1}- C_{i,i}c_{1,0}^{k-1,k}}{c_{1,0}^{k-1,k}c_{k,k}^{i,i-1}}.
\end{equation}

Using \eqref{eq17} as the inductive base in the variable $j$, and applying the fourth equality in~\eqref{eq13} at each step of induction, we derive the form of $B_{jk+i}$, valid for all $2 \le i \le k$ and $0 \le j \le \frac{t+1-i}{k}$:
\begin{equation}\label{eq18}
c_{t-(j+1)k-i+1,t-(j+1)k-i+2}^{\, (j+1)k+i,(j+1)k+i-1} 
= \frac{(j+2)\, c_{jk+i,jk+i-1}^{\, t-(j+1)k-i+1,t-(j+1)k-i+2} \, U_{1,1} 
        - C_{jk+i,i}\, c_{1,0}^{\, k-1,k}}{c_{1,0}^{\, k-1,k} \, c_{k,k}^{\, jk+i,jk+i-1}}.
\end{equation}

{\bf Step 3}: Before start this step we re-numerate the last $k$ equalities in \eqref{eq14} by supposing $B_{t-2k+1+p}$ with $1\leq p \leq k$ as 
$B_{jk+i_0+p}$ for some fixed $1 \le i_0 \le k$ and $j=\frac{t-2k-i_0+1}{k}.$ Moreover, we define $s_l$ such that
$$\left\lfloor \frac{lk - 1}{i_0} \right\rfloor < s_l \le \left\lfloor \frac{lk - 1}{i_0} \right\rfloor + 1, \quad 1 \le l \le r,$$
where $r = \frac{i_0}{\gcd(k, i_0)}.$

Substituting the second equality of \eqref{eq13} into \eqref{eq18} for $B_{jk+i_0+1}$, we obtain the base of induction 
providing by $l$ for $1\le l\le s_1-1$ to establish the expression for  $B_{jk+li_0+1}$ in the following form:

\begin{equation}\label{eq19}c_{t-k+li_0+2, t-k+li_0+1}^{k-li_0-1, k-li_0}=\frac{c_{li_0+1,li_0}^{k-li_0-1, k-li_0}}{c_{1,0}^{k-1,k}c_{t-k+1,t-k+1}^{li_0+1, li_0}}U_{l(j+2)+1,l(j+2)}.
\end{equation}

Assume that \eqref{eq19} holds for all $1\le l\le s_1-2$. Consider the expression for $B_{jk+(l+1)i_0+1}$ given in \eqref{eq18}. A direct substitution of the term $c_{t-k+li_0+2, t-k+li_0+1}^{k-li_0-1, k-li_0}$ using  \eqref{eq19} and replacing the $c_{k,k}^{t-2k+li_0+2,t-2k+li_0+1}$ via the second equality in \eqref{eq13}, yields the form \eqref{eq22} for $B_{j_0k+(l+1)i_0+1}$. 
 
Now, consider the expression$B_{j_0k+s_1i_0+1}$ from \eqref{eq18}. Substituting $c_{t-k+(s_1-1)i_0+2, t-k+(s_1-1)i_0+1}^{k-(s_1-1)i_0-1, k-(s_1-1)i_0}$ with the right hand side of \eqref{eq22} for the case $l=s_1-1$ and again applying the second equality of \eqref{eq13} to the structure constant $c_{k,k}^{t-2k+(s_1-1)i_0+2, t-2k+(s_1-1)i_0+1},$ we obtain the following expression for $B_{j_0k+s_1i_0+1}$:
\begin{equation}\label{eq20}
c_{t-2k+s_1i_0+2,t-2k+s_1i_0+1}^{2k-s_1i_0-1,2k-s_1i_0}=\frac{c_{s_1i_0-k+1,s_1i_0-k}^{2k-s_1i_0-1,2k-s_1i_0}}{c_{1,0}^{k-1,k}c_{t-k+1,t-k+1}^{s_1i_0-k+1,s_1i_0-k}}U_{s_1(j_0+2)+2,s_1(j_0+2)+1}
\end{equation}

Next, we analyze $B_{j_0k+(s_1+1)i_0+1-k}$ using \eqref{eq18}. Replacing the coefficient 
$c_{t-2k+s_1i_0+2,\; t-2k+s_1i_0+1}^{\,2k-s_1i_0-1,\;2k-s_1i_0}$ with the right-hand side of \eqref{eq20}, and rewriting $c_{k,k}^{\,j_0k+(s_1+1)i_0-k+1,\; j_0k+(s_1+1)i_0-k}$, via the second equality of \eqref{eq13} gives $B_{j_0k+(s_1+1)i_0+1-k}$:
\begin{equation}\label{eq21}
\begin{aligned}
c_{t-2k+(s_1+1)i_0+2,\; t-2k+(s_1+1)i_0+1}^{\,2k-(s_1+1)i_0-1,\; 2k-(s_1+1)i_0}
&=\frac{c_{(s_1+1)i_0-k+1,\; (s_1+1)i_0-k}^{\,2k-(s_1+1)i_0-1,\; 2k-(s_1+1)i_0}
}{c_{1,0}^{k-1,k}\,c_{t-k+1,\; t-k+1}^{(s_1+1)i_0-k+1,\; (s_1+1)i_0-k}}\\[2mm]
&\quad \times  U_{(s_1+1)(j_0+2)+2,\; (s_1+1)(j_0+2)+1}. 
\end{aligned}
\end{equation}

We now carry out an induction on $s_q$ for $1\le q\le r-1$.  Using \eqref{eq19} and \eqref{eq20} as the base of this induction, we establish the explicit forms of the equalities $B_{j_0 k + (s_q + u)i_0 + 1 - qk}$ with 
$1 \le u \le s_{q+1} - s_q - 1$ 
and for the case $q\neq r-1$  of $B_{(j_0+1)k+s_{q+1}i_0+1-(q+1)k}$. More precisely, we obtain
\begin{equation}\label{eq22}
\begin{aligned}
c_{t-(q+1)k+(s_q+u)i_0+2,\; t-(q+1)k+(s_q+u)i_0+1}^{(q+1)k-(s_q+u)i_0-1,\; (q+1)k-(s_q+u)i_0}&=\frac{c_{(s_q+u)i_0-qk+1,\; (s_q+u)i_0-qk}^{(q+1)k-(s_q+u)i_0-1,\; (q+1)k-(s_q+u)i_0}}{c_{1,0}^{k-1,k}\, c_{t-k+1,\; t-k+1}^{(s_q+u)i_0-qk+1,\; (s_q+u)i_0-qk}}\\[2mm]
&\quad\times U_{(s_q+u)(j_0+2)+q,\; (s_q+u)(j_0+2)+q-1},
\end{aligned}
\end{equation}
and 
\begin{equation}\label{eq23}
\begin{aligned}
c_{t-(q+2)k+s_{q+1}i_0+2,\; t-q+2)k+s_{q+1}i_0+1}^{(q+2)k-s_{q+1}i_0-1,\; (q+2)k-s_{q+1}i_0}&=\frac{c_{s_{q+1}i_0-(q+1)k+1,\; s_{q+1}i_0-(q+1)k}^{(q+2)k-s_{q+1}i_0-1,\; (q+2)k-s_{q+1}i_0}}{c_{1,0}^{k-1,k}\,c_{t-k+1,\; 
t-k+1}^{s_{q+1}i_0-(q+1)k+1,\; s_{q+1}i_0-(q+1)k}}\\[2mm]
&\quad\times U_{s_{q+1}(j_0+2)+q+1,\; s_{q+1}(j_0+2)+q}.
\end{aligned}
\end{equation}

Let assume for all  $1\le u\le s_{q+1}-s_q-2$ the equality $B_{j_0k+(s_q+u)i_0+1-qk}$ can be written as \eqref{eq22}. 

Next, we consider the equality $B_{j_0k+(s_{q+1}+1)i_0+1-(q+1)k}$ by means of \eqref{eq18}. Substituting the structure constants
$$c_{t-(q+2)k+s_{q+1}i_0+2,\; t-(q+2)k+s_{q+1}i_0+1}^{(q+2)k-s_{q+1}i_0-1,\; (q+2)k-s_{q+1}i_0}\quad\mbox{and}\quad c_{k,k}^{t-(q+3)k+s_{q+1}i_0+2,t-(q+3)k+s_{q+1}i_0+1},$$
by the right-hand side of \eqref{eq23} and the second equality in \eqref{eq13}, respectively, we obtain the equality 
$$B_{j_0k+(s_{q+1}+1)i_0+1-(q+1)k}$$
whose resulting expression is of the same form as in \eqref{eq22}.

Assume for all $1\le u\le s_{q+2}-s_{q+1}-2$, the equality $B_{j_0k+(s_{q+1}+u)i_0+1-(q+1)k}$ has the form as \eqref{eq22}.

Let consider the form of $B_{j_0k+(s_{q+1}+u+1)i_0+1-(q+1)k}$ given by \eqref{eq18}. Then taking instead of the following structure constants
$$ c_{t-(q+2)k+(s_{q+1}+u)i_0+2, t-(q+2)k+(s_{q+1}+u)i_0+1}^{(q+2)k-(s_{q+1}+u)i_0-1, (q+2)k-(s_{q+1}+u)i_0}\quad\mbox{and}\quad c_{k,k}^{t-(q+3)k+(s_{q+1}+u)i_0+2,t-(q+3)k+(s_{q+1}+u)i_0+1},$$
respectively, with the right hand side of \eqref{eq22} and the second equality of \eqref{eq13}, we get the form of $$B_{j_0k+(s_{q+1}+u+1)i_0+1-(q+1)k}$$ as \eqref{eq22}. 

Next by putting the expression of structure constants 
$$c_{t-(q+2)k+(s_{q+2}-1)i_0+2, t-(q+2)k+(s_{q+2}-1)i_0+1}^{(q+2)k-(s_{q+2}-1)i_0-1, (q+2)k-(s_{q+2}-1)i_0}\quad \mbox{and}\quad c_{k,k}^{t-(q+3)k+(s_{q+2}-1)i_0+2,t-(q+2)k+(s_{q+2}-1)i_0+1},$$ respectively, from \eqref{eq22} and the second equality of \eqref{eq13} into $B_{(j_0+1)k+s_{q+2}i_0-(q+2)k+1}$ given the form by \eqref{eq18}, we immediately obtain the form of $B_{(j_0+1)k+s_{q+2}i_0-(q+2)k+1}$ as \eqref{eq23}. Thus, we have proved \eqref{eq22} and \eqref{eq23}.

Finally, we analyze the equality $B_{(j_0+1)k+1}$ in \eqref{eq16}. Substituting the structure constant $c_{t-i_0+2,t-i_0+1}^{i_0-1,i_0}$ with the right-hand side of equation \eqref{eq22} under the parameter assignments $q = r-1$ and $u = s_r - s_{r-1} - 1$ and further incorporating the expression for $c_{t-k-i_0+2,t-k-i_0+1}^{i_0-1,i_0}$ derived from the relation
\[
L\big( (i_0-1,i_0),\ (k-i_0+1,k-i_0),\ (t-k-i_0+2,t-k-i_0+1) \big) = 0,
\]
we obtain the following form for the equality $B_{(j_0+1)k+1}$:
$$\frac{c_{k-i_0+1,k-i_0}^{i_0-1,i_0}}{c_{t-k+1,t-k+1}^{k-i_0+1,k-i_0}} \times \left( s_r(j_0+2) + r - 1 \right) U_{1,1} = 0.$$

Since $c_{k-i_0+1,k-i_0}^{i_0-1,i_0}$ and $c_{t-k+1,t-k+1}^{k-i_0+1,k-i_0}$ are non-zero, we get $U_{1,1} = 0$.
From the equality $A_1$, we obtain that $c_{k,k}^{t-k+1, t-k+1}=0.$ Therefore, for the arbitrary chosen $\tau_1$ and $\tau_2$ such that $\mathrm{diff}(\tau_1)+\mathrm{diff}(\tau_2)=0$ we conclude $[e_{\tau_1},e_{\tau_2}]=0$.

Let now prove the part ``if''. Assume that $\tau_1,\tau_2\in W$ with $\tau_1+\tau_2\in V_{2t+2}$ such that $\mathrm{diff}(\tau_1)+\mathrm{diff}(\tau_2)\neq0$. Then, without loss of generality, one can assume that weights $\tau_1$ and $\tau_2$ can be written in the following forms:
$$\tau_1=p\alpha_1+(p-1)\alpha_2,\ \tau_2=(t-p+1)\alpha_1+(t-p+2)\alpha_2,\ 1\le p\le t+1.$$

From the equalities 
$$L\big((1,0),(p-1,p-1),(t-p+1,t-p+2)\big)=0, \quad 2\leq p \leq t+1,$$ 
we get
\begin{equation}\label{eq24}
\begin{aligned}
    c_{p,p-1}^{t-p+1, t-p+2}=\frac{c^{p-1,p-1}_{t-p+1,t-p+2}}{c_{1,0}^{p-1,p-1}}c_{t,t+1}^{1,0}.\\[3mm]
\end{aligned}
\end{equation}

By induction assumption for Part (ii), all the coefficients $c_{1,0}^{p-1,p-1}$ and $c^{p-1,p-1}_{t-p+1,t-p+2}$ are non-zero \eqref{eq24} for all $2\leq p \leq t+1$. Consequently, the vanishing of the structure constants $c_{p,p-1}^{t-p+1, t-p+2}$ for some $p$ 
implies the vanishing of all of them. This is a contradiction with assumption $\tau_1+\tau_2\in V_{2t+2}$, which leads that $[e_{\tau_1},e_{\tau_2}]\neq0$. Thus, the (ii) holds for $V_{2t+2}$.
\end{proof}

\begin{rem}\label{rm4.5} From the proof of Lemma \ref{lem4.4} we conclude that each set $W_i, 1\leq i \leq s-1$, where $s$ is nilindex of $\cal N$, consists of all weights $\mu $ with 
$\mathrm{diff}(\mu)\le 1$ and $\mathrm{length}(\mu)\le i.$
\end{rem}

Building on the results of the preceding lemmas, we now establish the following proposition.
\begin{prop}\label{prop4.7}
Let $\mathcal R_{\mathcal T}$ be a solvable Lie algebra with nilradical $\cal N$ and let $\mathcal M$ be an $\mathcal R_{\mathcal T}$-module such that  the action of  $\mathcal T$ on $\mathcal M$ is diagonal, and all the weights of $\mathcal T$ on $\mathcal M$ are in $W$. Assume further that
$3\alpha_1+\alpha_2 \notin W$ and $\alpha_1+3\alpha_2 \notin W.$ Then $H^2(\cal N,\cal M)^{\cal T}=0$.
\end{prop}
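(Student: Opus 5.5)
We first reduce to the case of a one-dimensional module $\mathcal M=\mathbb C m$ of weight $\Lambda\in W$, using the filtration argument from the proof of Theorem~\ref{thm3.2}. Engel's theorem gives an $\mathcal N$-annihilated vector, and the resulting short exact sequences of $\mathcal T$-invariant cohomology let us induct on $\dim\mathcal M$. Then $f\in H^2(\mathcal N,\mathcal M)^{\mathcal T}$ is determined by the scalars $\Phi(\tau,\Lambda-\tau)$ with $\tau,\Lambda-\tau\in W$. By Lemma~\ref{lem4.4} and Remark~\ref{rm4.5}, every weight satisfies $\mathrm{diff}\le 1$. So we may take $\Lambda=(t+1)\alpha_1+t\alpha_2$ (the case $t\alpha_1+(t+1)\alpha_2$ is symmetric) or $\Lambda=(t+1)\alpha_1+(t+1)\alpha_2$. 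The small lengths are already covered by Lemma~\ref{lem4.1}.

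\emph{Odd case.} Every decomposition $\Lambda=\tau_1+\tau_2$ has one summand of $\mathrm{diff}=0$ and the other of $\mathrm{diff}=1$ with the same orientation. The admissible pairs are
\[
(p\alpha_1+(p-1)\alpha_2,\ (t-p+1)\alpha_1+(t-p+1)\alpha_2),\qquad 1\le p\le t,
\]
and by Lemma~\ref{lem4.4}(ii) all of the corresponding structure constants are nonzero. We will show that
\[
c(\tau,\Lambda-\tau)\,\Phi(\alpha_1,\Lambda-\alpha_1)=c(\alpha_1,\Lambda-\alpha_1)\,\Phi(\tau,\Lambda-\tau)
\]
for every such $\tau$, mimicking \eqref{eq6}. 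The tool is the pair of relations $L=Z=0$ on the triples $(\alpha_1,\ p\alpha_1+(p-1)\alpha_2,\ (t-p)\alpha_1+(t-p+1)\alpha_2)$; these are exactly the triples that produced \eqref{eq10}. In each such triple one bracket vanishes, because its two arguments have $\mathrm{diff}$ of opposite orientation summing to a weight with $\mathrm{diff}=2$, which is not in $W$. The $L$ and $Z$ relations therefore have identical shape, and induction on $p$ gives the proportionality. Taking $g(e_\Lambda)=-\Phi(\alpha_1,\Lambda-\alpha_1)/c(\alpha_1,\Lambda-\alpha_1)\,m$ then yields $f=d^1g$, as in Case (ii) of Theorem~\ref{thm3.2}.

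\emph{Even case.} Here the decompositions of $\Lambda$ are of two kinds:
\begin{itemize}
\item $(\mathrm{diff}\ 0,\ \mathrm{diff}\ 0)$: the bracket vanishes by Lemma~\ref{lem4.4}(ii), but $\Phi$ need not vanish a priori;
\item $(\mathrm{diff}\ 1,\ \mathrm{diff}\ 1)$ with opposite orientations: the pairs $p\alpha_1+(p-1)\alpha_2$, $(t-p+1)\alpha_1+(t-p+2)\alpha_2$, where \eqref{eq24} gives the structure constants.
\end{itemize}
For the second kind, the cocycle relations $Z$ on the triples $((1,0),(p-1,p-1),(t-p+1,t-p+2))$ parallel \eqref{eq24}. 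They express all $\Phi$ values of this kind proportionally to $\Phi(\alpha_1,\Lambda-\alpha_1)$, with the same proportionality factors as the $c$'s, and the coboundary $g$ supported on $e_\Lambda$ kills them.

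The main obstacle is the first kind: showing that $f-d^1g$ has vanishing $\Phi(k\alpha_1+k\alpha_2,(t+1-k)\alpha_1+(t+1-k)\alpha_2)$. For this I would redo the long elimination in the proof of Lemma~\ref{lem4.4}, Part (ii) of the even case, with the triples of \eqref{eq13}--\eqref{eq14} unchanged, but now with $Z$ in place of $L$. The $Z$-equations are linear in $\Phi$ with the same coefficients $c$ that appear in the $L$-equations. After subtracting $d^1g$, every $\Phi$ of the second kind has been made zero. The chain of substitutions Steps 1--3 then collapses to a relation of the form
\[
(s_r(j_0+2)+r-1)\,\widetilde U=0,
\]
where $\widetilde U$ is the $\Phi$-analogue of $U_{1,1}$. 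The analogue of $A_1$ then forces $\Phi(k\alpha_1+k\alpha_2,\dots)=0$.

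If reproducing the bookkeeping proves too heavy, a fallback is available. Directly apply $Z=0$ to $(\alpha_1,\alpha_2,\ k\alpha_1+k\alpha_2)$-type triples with third entry $(t-k)\alpha_1+(t-k+1)\alpha_2$, and proceed by induction on $k$. Each such relation contains exactly one $\Phi$ of the $(0,0)$ kind with a nonzero coefficient $c_{1,0}^{k-1,k}$. This should yield the vanishing for all $k$ with $1\le k\le \lfloor (t+1)/2\rfloor$. Hence $f-d^1g=0$, and so $H^2(\mathcal N,\mathcal M)^{\mathcal T}=0$.
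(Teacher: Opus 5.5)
\textbf{Verdict.} The even case has a genuine gap: your argument there is circular. Your reduction to $\dim\mathcal M=1$ and your odd case are fine and agree with the paper; your triples are those of \eqref{eq27}. (Small slip: the bracket that vanishes there is $[e_{\alpha_1},e_{p\alpha_1+(p-1)\alpha_2}]$, whose arguments have the \emph{same} orientation, not opposite.)

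\textbf{Where it fails.} The gap sits in the step you treated as routine. Take $\Lambda=(t+1)(\alpha_1+\alpha_2)$ and $2\le p\le t+1$. The cocycle relation on your triple $(\alpha_1,(p-1)(\alpha_1+\alpha_2),(t-p+1)\alpha_1+(t-p+2)\alpha_2)$ reads
\[
c_{1,0}^{p-1,p-1}\Phi_{p,p-1}^{t-p+1,t-p+2}+c_{p-1,p-1}^{t-p+1,t-p+2}\Phi_{t,t+1}^{1,0}+c_{t-p+1,t-p+2}^{1,0}\,\Phi_{t-p+2,t-p+2}^{p-1,p-1}=0 .
\]
In \eqref{eq24} the third term is absent because $c_{t-p+2,t-p+2}^{p-1,p-1}=0$. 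Here, however, its coefficient $c_{t-p+1,t-p+2}^{1,0}$ is nonzero by Lemma~\ref{lem4.4}(ii), since the $\mathrm{diff}$'s of its two arguments add up to $2$. That coefficient multiplies exactly a value of your first kind. Consequently:
\begin{itemize}
\item these $Z$-relations do not parallel \eqref{eq24};
\item the proportionality on the $(1,1)$ pairs is not established;
\item the claim that after subtracting $d^1g$ every $\Phi$ of the second kind is zero is unsupported.
\end{itemize}

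\textbf{The fallback is circular.} Its triples are the same family up to relabelling, now used to kill the $(0,0)$ values on the assumption that the $(1,1)$ values are already dead. Each member of the family links the reference value to one $(1,1)$ value and one $(0,0)$ value. So these $t$ relations leave at least $\lceil (t+1)/2\rceil-1\ge 1$ degrees of freedom beyond the coboundary direction once $t\ge 2$.

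The paper's \eqref{eq25} drops the same term $c^{q-1,q}_{1,0}\Phi^{n-q,n-q}_{q,q}$, and its final check never examines pairs of $\mathrm{diff}$-$0$ weights. So you have spotted a real omission, but your argument does not repair it.

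\textbf{How to repair it: reverse the order.}
\begin{itemize}
\item The relations \eqref{eq14} live at the top weight and are linear in the top-level constants, with lower-level coefficients. By contrast, \eqref{eq13} and the auxiliary relation in Step~3 involve only lower weights.
\item Hence $\Phi$ satisfies the same linear system. Run the elimination of Lemma~\ref{lem4.4} (with its base case from Lemma~\ref{lem4.3}) on $f$ itself, before any coboundary is subtracted. This gives $\widetilde U_{1,1}=0$.
\item The analogue of $A_1$ then yields $\Phi_{k,k}^{t-k+1,t-k+1}=0$.
\item Only now do your $\alpha_1$-relations become two-term and give the proportionality. The $g$ supported on $e_\Lambda$ then finishes the proof.
\end{itemize}
In your ordering this elimination would be vacuous, since $\widetilde U_{1,1}$ involves only $(1,1)$ values.

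\textbf{Why counting coefficients is not enough.} This step has real content already for $\Lambda=3(\alpha_1+\alpha_2)$. There are only three relations for four unknowns:
\[
Z(\alpha_1,\alpha_2,2\alpha_1+2\alpha_2),\quad Z(\alpha_1,\alpha_1+\alpha_2,\alpha_1+2\alpha_2),\quad Z(\alpha_2,\alpha_1+\alpha_2,2\alpha_1+\alpha_2).
\]
They have rank three only because of lower-level Jacobi identities such as $L(\alpha_1,\alpha_2,\alpha_1+2\alpha_2)=0$. Counting nonzero coefficients in each equation does not settle the rank.
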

\begin{proof} Applying the same arguments in the proof of Theorem \ref{thm3.2}, it is enough to consider $\dim \cal M = 1$. 
Suppose $\cal M=\mathbb Cm$ and the weight of $\cal T$ on $\cal M$ is $\Lambda$. Consider an arbitrary $f\in H^2(\cal N,\cal M)^{\cal T}$ such that $\cal T\cdot f=0$. 

Due to Lemma \ref{lem4.4}, all weights in $ W $ have one of the following forms:
$$n\alpha_1 + n\alpha_2, \quad (n+1)\alpha_1 + n\alpha_2, \quad n\alpha_1 + (n+1)\alpha_2. $$

Thank to symmetry the case of $\alpha_1$ and $\alpha_2$ 
it is enough to consider the cases $n\alpha_1 + n\alpha_2$ and $(n+1)\alpha_1 + n\alpha_2.$

Firstly, we assume that $\Lambda=n\alpha_1 + n\alpha_2$. 
From equalities with $1\leq q \leq n-1$:
$$\left \{ \begin{array}{lll}
L\big(\alpha_1,\ (q-1)\alpha_1+q\alpha_2,\ (n-q)\alpha_1+(n-q)\alpha_2\big)=0, \\[3mm]
Z \big(\alpha_1,\ (q-1)\alpha_1+q\alpha_2,\ (n-q)\alpha_1+(n-q)\alpha_2\big)=0,
\end{array}\right. $$
we obtain the restrictions on structure constants
\begin{equation}\label{eq25} \left \{ \begin{array}{lll}
c_{n-q,n-q}^{1,0}c_{n-q+1,n-q}^{q-1,q}+
c^{q-1,q}_{n-q,n-q}c^{n-1,n}_{1,0} &=&0, \\[3mm]
c_{n-q,n-q}^{1,0}\Phi_{n-q+1,n-q}^{q-1,q}+
c^{q-1,q}_{n-q,n-q}\Phi^{n-1,n}_{1,0} &=&0.
\end{array}\right.
\end{equation}

Due to Lemma \eqref{lem4.4}, we have $c_{n-q+1,n-q}^{q-1,q} \neq 0$ and $c^{q-1,q}_{n-q,n-q}\neq 0$. Therefore, by using  \eqref{eq25}, we get 
\begin{equation}\label{eq26}
\Phi_{n-q+1,n-q}^{q-1,q}=
\frac{c_{n-q+1,n-q}^{q-1,q}}{c^{n-1,n}_{1,0}}\Phi^{n-1,n}_{1,0},\quad 1\leq q \leq n-1.\end{equation}

Let define cochain $g \in C^1(\cal N, \cal R)$ as follows
$$g(e_{\rho})=0 \quad \text{for} \quad \rho \neq \Lambda=n\alpha_1+n\alpha_2\quad \text{and}\quad 
g(e_{\Lambda})=-\frac{\Phi_{1,0}^{n-1,n}}{c_{1,0}^{n-1,n}}m.$$

Assume that $\alpha+\beta=\Lambda$ and $\alpha$ is either $\alpha_1$ or $\alpha_2$.

If $\alpha=\alpha_1$, then from definition of $g$ we immediately get $(f-d^1{g})(e_{\alpha_1}, e_{\beta})=0.$

If $\alpha=\alpha_2$, then applying the equality \eqref{eq26} for $q=1$, we obtain $(f-d^1g)(e_{\alpha_2}, e_{\beta})=0.$

Assume $\alpha$ and $\beta$ have length more than 1. Without loss of generality, we can assume 
$$\alpha=(q-1)\alpha_1+q\alpha_2, \quad \beta=(n-q+1)\alpha_1+(n-q)\alpha_2 \quad \mbox{for some} \quad 2 \leq q \leq n-1.$$ 
Then by \eqref{eq26} for we get, 
$(f-d^1{g})(e_\alpha, e_{\beta}) = \Phi_{q-1,q}^{n-q+1,n-q}m+c_{q-1,q}^{n-q+1,n-q}g(e_{\Lambda})=0.$ Hence, $f=d^1 g.$

Secondly, assume that $\Lambda=(n+1)\alpha_1 + n\alpha_2 \in W$. From the equalities
$$\begin{array}{lll}
L\big(\alpha_1,\ q\alpha_1+(q-1)\alpha_2,\
(n-q)\alpha_1+(n-q+1)\alpha_2\big)=0,  \\[3mm]
Z\big(\alpha_1,\ q\alpha_1+(q-1)\alpha_2,\
(n-q)\alpha_1+(n-q+1)\alpha_2\big)=0,
\end{array}$$
with $2\leq q \leq n-1$, we deduce
\begin{equation}\label{eq27}
\left\{\begin{array}{lll}
c_{n-q,n-q+1}^{1,0}c_{n-q+1,n-q+1}^{q,q-1}+c_{q,q-1}^{n-q,n-q+1}c_{n,n}^{1,0}&=&0,\\[3mm] 
c_{n-q,n-q+1}^{1,0}\Phi_{n-q+1,n-q+1}^{q,q-1}+c_{q,q-1}^{n-q,n-q+1}\Phi_{n,n}^{1,0}&=&0.
\end{array}\right.
\end{equation}

According Lemma \eqref{lem4.4} we have $c_{n-q,n-q+1}^{1,0} \neq 0$ and $c^{q,q-1}_{n-q,n-q+1} \neq 0$. Then the system of equalities \eqref{eq27} implies 

\begin{equation}\label{eq28}
\Phi_{n-q+1,n-q+1}^{q,q-1}=\frac{c_{n-q+1,n-q+1}^{q,q-1}}{c_{n,n}^{1,0}}\Phi_{n,n}^{1,0}, \quad 2\leq q \leq n-1.  
\end{equation}
Define $g \in C^1(\cal N, \cal R)$ as
$$g(e_{\rho})=0 \quad \text{for} \quad \rho \neq \Lambda=(n+1)\alpha_1+n\alpha_2 \quad \mbox{and} \quad 
g(e_{\Lambda})=-\frac{\Phi^{n,n}_{1,0}}{c^{n,n}_{1,0}}m.$$

Now we consider the case of $\alpha+\beta=\Lambda$ with $\alpha$ has length 1. Then the case of $\alpha=\alpha_2$ is not possible due to $\mathrm{diff}(\Lambda-\alpha_2)=2$, which contradicts to assertion of Lemma \ref{lem4.4}. Hence, we have $\alpha=\alpha_1$. In that case, 
$(f-d^1g)(e_{\alpha_1}, e_{\beta})=0.$

Let $\alpha+\beta=\Lambda$ such that $\alpha$ and $\beta$ have length more than 1. Without loss of generality, we can assume
$$\alpha=q\alpha_1+(q-1)\alpha_2,\ \ \ 
\beta=(n-q+1)\alpha_1+(n-q+1)\alpha_2, \quad  2 \leq q \leq n-1.$$

Then by using \eqref{eq28}, we can obtain $(f-d^1g)(e_\alpha, e_{\beta}) =0.$ Therefore, $f=d^1g.$
\end{proof}

In the following theorem we give triviality of the second cohomology groups of $\cal R_{\cal T}$ with adjoint module.

\begin{thm}\label{thm4.8}
Let $\mathcal R_{\mathcal T}$ be a solvable Lie algebra with nilradical $\mathcal N$. Assume further that $3\alpha_1+\alpha_2 \notin W$ and $\alpha_1+3\alpha_2 \notin W.$ Then $\mathcal R_{\mathcal T}$ is cohomologically rigid.
\end{thm}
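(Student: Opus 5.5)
The plan is to reduce Theorem~\ref{thm4.8} to Proposition~\ref{prop4.7} through the Hochschild--Serre factorization, Theorem~\ref{Serre}. We apply it with $\mathcal G=\mathcal R_{\mathcal T}$, $\mathcal Q=\mathcal T$ and $\mathcal M=\mathcal R_{\mathcal T}$ (the adjoint module). By Remark~\ref{rm2.2}, $H^2(\mathcal R_{\mathcal T},\mathcal R_{\mathcal T})=0$ holds if and only if $H^j(\mathcal N,\mathcal R_{\mathcal T})^{\mathcal T}=0$ for $j=0,1,2$.

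For $j=0,1$ no new work is needed. The vanishing of $H^i(\mathcal R_{\mathcal T},\mathcal R_{\mathcal T})$ for $i\le 1$ is quoted from \cite{super}, as already used for the Corollary after Theorem~\ref{thm3.2}. This is equivalent, again by Remark~\ref{rm2.2}, to $H^0(\mathcal N,\mathcal R_{\mathcal T})^{\mathcal T}=H^1(\mathcal N,\mathcal R_{\mathcal T})^{\mathcal T}=0$. This encodes completeness of $\mathcal R_{\mathcal T}$: it is centerless and has no outer derivations.

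The remaining task is $H^2(\mathcal N,\mathcal R_{\mathcal T})^{\mathcal T}=0$, and here Proposition~\ref{prop4.7} is applied to the module $\mathcal M=\mathcal R_{\mathcal T}$. Its hypothesis that $\mathcal T$ acts diagonally is immediate, since $\mathcal T$ is a torus acting diagonally on $\mathcal N$ and trivially on itself.

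The one point I expect to need care is the requirement that all weights of $\mathcal T$ on $\mathcal M$ lie in $W$. On the $\mathcal N$-part this holds by definition, but $\mathcal T\subset\mathcal R_{\mathcal T}$ has weight $0\notin W$. I would handle this by considering the $\mathcal R_{\mathcal T}$-submodule $\mathcal N\subset\mathcal R_{\mathcal T}$ and the quotient $\mathcal R_{\mathcal T}/\mathcal N\cong\mathcal T$, on which $\mathcal N$ and $\mathcal T$ act trivially. This gives the exact sequence
$$H^2(\mathcal N,\mathcal N)^{\mathcal T}\to H^2(\mathcal N,\mathcal R_{\mathcal T})^{\mathcal T}\to H^2(\mathcal N,\mathcal T)^{\mathcal T}.$$
The left term vanishes by Proposition~\ref{prop4.7}.

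For the right term, a $\mathcal T$-invariant cochain $f$ with values in the weight-zero module $\mathcal T$ must satisfy $f(\mathcal N_\alpha,\mathcal N_\beta)\subseteq$ the weight-$(\alpha+\beta)$ component. Since all weights of $\mathcal N$ are nonnegative integer combinations of the primitive weights and are nonzero, $\alpha+\beta\neq 0$, so $f=0$. Hence the right term vanishes, and therefore so does the middle. If exactness of the middle term in this invariant-cohomology sequence is in doubt, the same argument can be run at the cochain level: the $\mathcal T$-component of any invariant cocycle is zero by weights, so the cocycle takes values in $\mathcal N$. The cocycle condition restricted to $\mathcal N$-valued cochains coincides with that for the module $\mathcal N$, because $\mathcal N\cdot\mathcal N\subseteq\mathcal N$. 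Then Proposition~\ref{prop4.7} yields $g\in C^1(\mathcal N,\mathcal N)$ with $f=d^1g$, and $g$ is also a cochain in $C^1(\mathcal N,\mathcal R_{\mathcal T})$.

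Combining the three vanishings, $j=0,1,2$, with Remark~\ref{rm2.2} gives $H^2(\mathcal R_{\mathcal T},\mathcal R_{\mathcal T})=0$, i.e.\ $\mathcal R_{\mathcal T}$ is cohomologically rigid.
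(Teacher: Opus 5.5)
Your proposal is correct and follows the same route as the paper. Remark~\ref{rm2.2} reduces rigidity to the vanishing of $H^j(\mathcal N,\mathcal R_{\mathcal T})^{\mathcal T}$ for $j\le 2$, completeness of $\mathcal R_{\mathcal T}$ handles $j=0,1$, and Proposition~\ref{prop4.7} handles $j=2$. Your extra step disposing of the weight-zero summand $\mathcal T$ of the adjoint module is sound: an invariant $2$-cochain cannot take values there since $\alpha+\beta\neq 0$. It makes explicit a point the paper's one-line proof passes over, since Proposition~\ref{prop4.7} formally requires all weights of $\mathcal M$ to lie in $W$.
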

\begin{proof} Proposition \ref{prop4.7} and completeness of the algebra $\cal R_{\cal T}$ imply that 
\(H^{i}(\mathcal{N}, \mathcal{M})^{\mathcal{T}} = 0\) for all \(0 \leq i \leq 2\). Now, the claim follows directly from  Remark~\ref{rm2.2}.
\end{proof}

In fact, the conditions in Theorem \ref{thm4.8} ensuring cohomological rigidity of the algebra $\mathcal{R}_{\mathcal{T}}$ are not necessary conditions. This is demonstrated by the model nilpotent and model filiform algebras, which remain cohomologically rigid despite satisfying $\alpha_1 + 3\alpha_2 \in W$ (see, Proposition 4 in \cite{Ancochea4} and \cite{libroKluwer}).

Below we present one-parametric family of nilpotent Lie algebras of maximal rank that satisfies to assumptions of Theorem~\ref{thm4.8}.
\begin{exa}\label{exa3.7} For each integer \(n \ge 1\) and each parameter $t \in \mathbb{C}$, consider the \(3(n+1)\)-dimensional nilpotent Lie algebra \(\mathfrak{g}_{n,t}\) with basis
\[\big\{
e_{\,k\alpha_1 + (k-1)\alpha_2},\;
e_{\, (k-1)\alpha_1 + k\alpha_2},\;
e_{\, k(\alpha_1 + \alpha_2)}
\;\big|\; 1 \le k \le n+1
\big\},\]
and nonzero brackets given by
$$\left\{\begin{array}{rlll}
[e_{(i-1)\alpha_1 + i\alpha_2},\, e_{(j-i+1)\alpha_1 + (j-i)\alpha_2}]&= (-1)^{\delta_{i,1}}\,e_{j\alpha_1 + j\alpha_2}, \\[3mm]
[e_{i\alpha_1 + i\alpha_2},\, e_{(j-i+1)\alpha_1 + (j-i)\alpha_2}]&= (-1)^{\delta_{i,j}}\,e_{(j+1)\alpha_1 + j\alpha_2}, \\[3mm]
[e_{i\alpha_1 + i\alpha_2},\, e_{(j-i)\alpha_1 + (j-i+1)\alpha_2}]&= -(-1)^{\delta_{i,j}}\,
e_{j\alpha_1 + (j+1)\alpha_2}, \\[3mm]
[e_{\alpha_1},\, e_{n\alpha_1 + (n+1)\alpha_2}]
&= t\, e_{(n+1)\alpha_1 + (n+1)\alpha_2}, \\[3mm]
[e_{(i-1)\alpha_1 + i\alpha_2},\, e_{(n-i+2)\alpha_1 + (n-i+1)\alpha_2}]&= t\, e_{(n+1)\alpha_1 + (n+1)\alpha_2},\\[3mm]
\end{array}
\right.$$
where \(\delta_{i,j}\) denotes the Kronecker delta and \(1 \le i \le j \le n\).

The algebra \(\mathfrak{g}_{n,t}\) admits a two-dimensional torus of maximal rank whose associated root system $W$ contains neither the weight $3\alpha_1 + \alpha_2$ nor the weight $\alpha_1 + 3\alpha_2$. It therefore follows from Theorem~\ref{thm4.8}, that the maximal solvable extension of $\mathfrak{g}_{n,t}$ is cohomologically rigid.
\end{exa}

\section{Non-cohomologically rigidity of solvable Lie algebras of maximal rank}\label{sec5}

In contrast with the previous two sections, which established rigidity phenomena by proving the vanishing of $H^{2}(\mathcal{R}_{\mathcal{T}},\mathcal{R}_{\mathcal{T}})$ and of certain higher cohomology groups, this section is devoted to solvable Lie algebras $\mathcal{R}_{\mathcal{T}}$ for which cohomological rigidity fails. We describe the structural configurations that lead to the non-vanishing of the second cohomology group and present the main results of this section below.

\begin{thm}\label{thm5.1} Suppose $\cal R_{\cal T}$ be a solvable Lie algebra  with nilradical $\cal N$ such that there exist $\alpha, \beta, \gamma \in \Pi$ satisfying one of the following conditions:
$$\begin{array}{llllcc}
i)& e_{3\alpha+2\beta}\in \operatorname{Center}(\cal N)& and &\{3\alpha + \beta,\ 2\alpha + 2\beta\} &\subseteq &W;\\[3mm]
ii)& e_{4\alpha+2\beta}\in \operatorname{Center}(\cal N)& and & \{4\alpha+\beta,\ 3\alpha+ 2\beta, 2\alpha+2\beta\} &\subseteq& W;\\[3mm]
iii)&e_{\alpha+\beta+\gamma}\in \operatorname{Center}(\cal N)&and &\{\alpha + \beta,\ \alpha + \gamma,\ \beta + \gamma\}&\subseteq &W.\\[3mm]
\end{array}
$$
Then $H^2(\cal R_{\cal T}, \cal R_{\cal T}) \neq 0$.
\end{thm}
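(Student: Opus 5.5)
By Remark~\ref{rm2.2} and Theorem~\ref{Serre}, $H^2(\mathcal R_{\mathcal T},\mathcal R_{\mathcal T})$ contains $H^2(\mathcal N,\mathcal R_{\mathcal T})^{\mathcal T}$ as a direct summand, so it suffices to produce a $\mathcal T$-invariant $2$-cocycle $f\in Z^2(\mathcal N,\mathcal R_{\mathcal T})$ that is not a coboundary. In each case I let $\Lambda$ denote the distinguished central weight ($3\alpha+2\beta$, $4\alpha+2\beta$ or $\alpha+\beta+\gamma$). I take $f$ with values in the line $\mathbb C e_\Lambda$: $f(e_\mu,e_\nu)=\Phi(\mu,\nu)e_\Lambda$ when $\mu+\nu=\Lambda$, and $0$ otherwise. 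This is automatically $\mathcal T$-invariant.

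Since $e_\Lambda$ is central in $\mathcal N$, the terms $e_\mu\cdot f(\cdot,\cdot)$ vanish. The cocycle condition on $\mathcal N$ therefore reduces exactly to the three-term identities $Z(\mu,\nu,\rho)=0$ of \eqref{eq2} for triples with $\mu+\nu+\rho=\Lambda$. The same holds for the $\mathcal T$-part, because $f$ has weight zero. Similarly, a $\mathcal T$-invariant coboundary $d^1g$ landing in $\mathbb C e_\Lambda$ must have $g(e_\Lambda)=a\,e_\Lambda$. Components of $g$ mapping other weights into $e_\Lambda$ are excluded by weight reasons, since $g$ is weight-preserving and $\dim\mathcal N_\Lambda=1$. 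Possible contributions $e_\mu\cdot g(e_\nu)$ with $g(e_\nu)\in\mathcal N_\nu$ must also be accounted for: they give terms $[e_\mu,g(e_\nu)]$ proportional to $c(\mu,\nu)$ times the scalar of $g$ on $e_\nu$, and together with the $-g([e_\mu,e_\nu])$ term these span exactly the $\mathcal T$-invariant coboundaries with values in $\mathbb C e_\Lambda$, namely $\Phi(\mu,\nu)=c(\mu,\nu)(a_\mu+a_\nu-a_\Lambda)$ with $g(e_\rho)=a_\rho e_\rho$. So the task becomes a finite linear-algebra problem: the space of solutions $\Phi$ of $\{Z=0\}$ is strictly larger than $\{c(\mu,\nu)(a_\mu+a_\nu-a_\Lambda)\}$.

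The main move is to choose $\Phi$ supported on a pair $(\mu,\nu)$ with $c(\mu,\nu)=0$, since then no coboundary hits it.

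In case (iii), I take $\Phi(\alpha,\beta+\gamma)=1$ and $\Phi(\beta,\alpha+\gamma)=\Phi(\gamma,\alpha+\beta)=0$, together with antisymmetry, so that $\Phi$ is supported only on decompositions into a primitive weight plus a length-two weight. The only relevant constraint is $Z(\alpha,\beta,\gamma)=c(\alpha,\beta)\Phi(\alpha+\beta,\gamma)+\dots$, which yields one linear equation in three unknowns. Coboundaries give the vector $(c(\alpha,\beta+\gamma),c(\beta,\alpha+\gamma),c(\gamma,\alpha+\beta))\,a'$, where $a'=-a_\Lambda+\dots$, and the $a_\mu$ with $\mu$ primitive or of length two cancel out via the Jacobi identity. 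The solution space of one equation is at least two-dimensional, whereas the coboundaries span at most a line plus contributions from the $a$'s. Here I must check carefully, using Jacobi $L(\alpha,\beta,\gamma)=0$, that the coboundary image is one-dimensional. This yields $\dim\ge 1$ for the quotient.

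Cases (i) and (ii) are analogous, with $\alpha,\beta$ and the decompositions of $3\alpha+2\beta$, respectively $4\alpha+2\beta$. I list all splittings $\mu+\nu=\Lambda$ with $\mu,\nu\in W$ (for (i): $\alpha+(2\alpha+2\beta)$, $\beta+(3\alpha+\beta)$, $(\alpha+\beta)+(2\alpha+\beta)$; for (ii) one more level). The constraints $Z$ come from the triples $(\alpha,\alpha,\ldots)$, which are trivial by antisymmetry, $(\alpha,\beta,2\alpha+\beta)$, and $(\alpha,\alpha+\beta,\alpha+\beta)$, which is trivial. This gives fewer independent equations than unknowns minus the rank of the coboundary map.

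The main obstacle is computing the rank of the coboundary map $a\mapsto c(\mu,\nu)(a_\mu+a_\nu-a_\Lambda)$ restricted to these components, and verifying that it is strictly less than the dimension of the solution space of the $Z$-equations. Here the hypotheses that $3\alpha+\beta$ and $2\alpha+2\beta$ lie in $W$ guarantee that there are enough independent unknowns (each such weight contributes a new splitting), while the centrality of $e_\Lambda$ ensures that no further cocycle equations (from brackets $[e_\Lambda,\cdot]$) appear. I would first attempt to exhibit explicitly a nonzero $\Phi$ orthogonal, via Jacobi relations, to the coboundary image, and then conclude $H^2\neq 0$.
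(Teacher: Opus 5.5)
\textbf{Cases (i) and (iii).} Your framework is sound as far as it goes. For an invariant $f$ with values in $\mathbb{C}e_\Lambda$, with $\Lambda$ the central weight, the cocycle condition is exactly the system $Z=0$. If $f=d^1g$ with $g(e_\rho)=a_\rho e_\rho$, then $d^1g$ must vanish off $\Lambda$. This forces $a$ to be additive on $W\setminus\{\Lambda\}$, by Lemma~\ref{2.14} together with centrality of $e_\Lambda$; this, not the Jacobi identity, is what eliminates the $a_\mu$. Hence the coboundaries supported on that component are exactly the line $\mathbb{C}\,(c(\mu,\nu))_{\mu+\nu=\Lambda}$.

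In cases (i) and (iii) there is a single nontrivial constraint on three unknowns: $Z(\alpha,\beta,2\alpha+\beta)=0$, resp.\ $Z(\alpha,\beta,\gamma)=0$. So the quotient is one-dimensional. This count is a tidier substitute for the paper's explicit two-pair cocycles and its case split on which structure constant vanishes. Two corrections there:
\begin{itemize}
\item Your concrete choice in (iii) is not a cocycle. Taking $\Phi(\alpha,\beta+\gamma)=1$ and $\Phi(\beta,\alpha+\gamma)=\Phi(\gamma,\alpha+\beta)=0$ gives $Z(\alpha,\beta,\gamma)=-c(\beta,\gamma)\neq0$.
\item The single-pair device of your ``main move'' is unavailable. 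In (i) and (iii) every unknown enters the unique constraint with a nonzero coefficient, so every cocycle has at least two splittings in its support. Moreover, a splitting with $c(\mu,\nu)=0$ need not exist.
\end{itemize}

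\textbf{Case (ii): the genuine gap.} Here your method produces nothing. For $\Lambda=4\alpha+2\beta$ the splittings are $\alpha+(3\alpha+2\beta)$, $\beta+(4\alpha+\beta)$ and $(\alpha+\beta)+(3\alpha+\beta)$; the diagonal pair $(2\alpha+\beta)+(2\alpha+\beta)$ contributes nothing. But there are now two nontrivial triples, $(\alpha,\beta,3\alpha+\beta)$ and $(\alpha,\alpha+\beta,2\alpha+\beta)$.
\begin{itemize}
\item The second triple gives $c(\alpha+\beta,2\alpha+\beta)\Phi(3\alpha+2\beta,\alpha)+c(2\alpha+\beta,\alpha)\Phi(3\alpha+\beta,\alpha+\beta)=0$, with $c(2\alpha+\beta,\alpha)\neq0$.
\item The first triple contains $\Phi(4\alpha+\beta,\beta)$ with coefficient $c(3\alpha+\beta,\alpha)\neq0$ (forced by $4\alpha+\beta\in W$), which the second lacks.
\end{itemize}
So the system has rank $2$ and its solution space is one-dimensional. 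It therefore equals the coboundary line, which is nonzero since $4\alpha+2\beta\in W$ and lies in the solution space by Jacobi. Every invariant cocycle with values in $\mathbb{C}e_{4\alpha+2\beta}$ is thus a coboundary. This contradicts your claim that there are fewer independent equations than unknowns minus the rank of the coboundary map.

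\textbf{What is missing in case (ii).} The nontrivial class lives one level down, at the non-central weight $3\alpha+2\beta$, where the configuration of case (i) sits. Because $e_{3\alpha+2\beta}$ is not central, a case-(i) cocycle there creates the terms $e_\alpha\cdot f(e_\beta,e_{3\alpha+\beta})$ and $e_\alpha\cdot f(e_{\alpha+\beta},e_{2\alpha+\beta})$ in the cocycle equations at $4\alpha+2\beta$. These must be absorbed by a component at $4\alpha+2\beta$, which is always possible because the rank-$2$ system there is surjective. This two-level cochain is the paper's $f$ in case (ii).

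Its non-triviality needs a new coboundary computation with both $a_{3\alpha+2\beta}$ and $a_{4\alpha+2\beta}$ free. This gives a $2$-dimensional space of coboundaries inside a $3$-dimensional space of such cocycles. The argument presumes $[e_{3\alpha+2\beta},\mathcal N]\subseteq\mathcal N_{4\alpha+2\beta}$; otherwise equations at further weights enter, and the paper leaves this point unaddressed too. Also, the paper's printed coefficient of $f(e_{\alpha+\beta},e_{2\alpha+\beta})$ in case (ii) must be $c(2\alpha+\beta,\alpha)/c(\alpha,\beta)$ for $Z(\alpha,\beta,2\alpha+\beta)=0$ to hold.
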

\begin{proof}
\textbf{Case (i).}
Since $3\alpha+\beta\in W$, we have
$c(\alpha, p\alpha+\beta)\neq 0,\quad 0\le p\le 2.$ From the identity $L(\alpha,\beta,\alpha+\beta)=0$ and the assumption $2\alpha+2\beta\in W$, we obtain
$$c(\beta,2\alpha+\beta)\neq 0,
\qquad c(\alpha,\alpha+2\beta)\neq 0.$$

Furthermore, from the equality $L(\alpha,\beta,2\alpha+\beta)=0$ it follows that at least two of
$$c(\alpha+\beta,2\alpha+\beta),\qquad
c(2\alpha+2\beta,\alpha),\qquad
c(3\alpha+\alpha,\beta)$$
are non-zero.

Let $c(\beta,3\alpha+\beta)\neq 0$. Then at least one of $c(\alpha,2\alpha+2\beta)$ or $c(\alpha+\beta,2\alpha+\beta)$ is non-zero.  
Define $f\in C^{2}(\mathcal{N},\mathcal{R}_{\mathcal{T}})^{\mathcal{T}}$ by
$$\left\{\begin{array}{lllll}
f(e_{\alpha},e_{2\alpha+2\beta})&=&e_{3\alpha+2\beta},\\[2mm]
f(e_{\alpha+\beta},e_{2\alpha+\beta})&=&\frac{c(\beta,2\alpha+\beta)}{c(\alpha,\beta)}\, e_{3\alpha+2\beta},\\[2mm]
\end{array}\right.$$
and $f(e_{\tau_1},e_{\tau_2})=0$ for all the other pairs of weights $\tau_1,\tau_2$.

One verifies directly that $f\in Z^{2}(\mathcal{N},\mathcal{R}_{\mathcal{T}})^{\mathcal{T}}$.  
Assume that $f=d^{1}g$ for some $g\in C^{1}(\mathcal{N},\mathcal{R}_{\mathcal{T}})^{\mathcal{T}}$ with $g(e_{\alpha})=\mu_{\alpha}e_{\alpha}$.  
Evaluating $(f-d^{1}g)(e_{\alpha},e_{\beta})=0$ 
yields the system, which has no a solution. Hence $f\notin B^{2}(\mathcal{N},\mathcal{R}_{\mathcal{T}})^{\mathcal{T}}$.

Let $c(\beta,3\alpha+\beta)=0$. Then both the structure constants $c(\alpha,2\alpha+2\beta)$ and $c(\alpha+\beta,2\alpha+\beta)$ are non-zero.  
Define
$$\begin{cases}
f(e_{\beta},e_{3\alpha+\beta})=e_{3\alpha+2\beta},\\[2mm]
f(e_{\alpha+\beta},e_{2\alpha+\beta})
=\frac{c(2\alpha+\beta,\alpha)}{c(\alpha,\beta)}\,
e_{3\alpha+2\beta},\\[2mm]
\end{cases}$$
and $f(e_{\tau_1},e_{\tau_2})=0$ for all the other pairs of weights $\tau_1,\tau_2$. 
A similar computation shows that again $f\notin B^{2}(\mathcal{N},\mathcal{R}_{\mathcal{T}})^{\mathcal{T}}$, hence $
0\neq f\in H^{2}(\mathcal{N},\mathcal{R}_{\mathcal{T}})^{\mathcal{T}}.$

\textbf{Case (ii)} Since $4\alpha+\beta\in W$, the structure constants $c(\alpha, p\alpha+\beta)$ are non-zero for $ 0\le p\le 3.$ From the equality $L(\alpha,\beta,2\alpha+\beta)=0$ and the condition $3\alpha+2\beta\in W$, it follows that  at least two of the following structure constants are non-zero:
$$
c(\beta,3\alpha+\beta),
\qquad
c(\alpha,2\alpha+2\beta),\quad c(\alpha+\beta,2\alpha+\beta).$$

Similarly, the equalities $L(\alpha,\beta,3\alpha+\beta)=0$  and $L(\alpha,\alpha+\beta,2\alpha+\beta)=0$ imply that at least two of the followings are non-zero:
$$c(\alpha,3\alpha+2\beta),\quad c(\beta,4\alpha+\beta),\quad c(\alpha+\beta,3\alpha+\beta).$$

Define a 2-cocycle $f\in C^{2}(\mathcal{N},\mathcal{R}_{\mathcal{T}})^{\mathcal{T}}$ by
$$
\left\{\begin{array}{llllllll}
f(e_{\beta},e_{3\alpha+\beta})&=&e_{3\alpha+2\beta},&
f(e_{\beta},e_{4\alpha+\beta})&=&\frac{c(3\alpha+2\beta,\alpha)c(\beta,2\alpha+\beta)}{c(2\alpha+\beta,\alpha)c(3\alpha+\beta,\alpha)}e_{4\alpha+2\beta},\\[3mm]
f(e_{\alpha+\beta},e_{2\alpha+\beta})&=&\frac{c(\beta,2\alpha+\beta)}{c(\alpha,\beta)}\, e_{3\alpha+2\beta},&
f(e_{\alpha+\beta},e_{3\alpha+\beta})&=&\frac{c(3\alpha+2\beta,\alpha)c(\beta,2\alpha+\beta)}{c(2\alpha+\beta,\alpha)c(\alpha,\beta)}e_{4\alpha+2\beta},\\[3mm]
\end{array}\right.$$
and $f(e_{\tau},e_{\pi})=0$ on all other pairs of weights.

One can verify that $f\in Z^2(\cal N,\cal R_{\cal T})^{\cal T}$. By assuming $f=d^1g$ for some $g\in C^{1}(\mathcal{N},\mathcal{R}_{\mathcal{T}})^{\mathcal{T}}$ with $g(e_{\gamma})=\mu_{\gamma}e_{\gamma}$, we get equations system which does not have any solution with respect to variables $\mu_{\gamma},\  \gamma\in W.$ Hence, $0\neq f\in H^2(\cal R_{\cal T},\cal R_{\cal T}).$

\textbf{Case (iii).}
The equality $L(\alpha,\beta,\gamma)=0$ implies that at least two of the structure constants
$$c(\alpha,\beta+\gamma),\qquad c(\beta,\alpha+\gamma),\qquad c(\gamma,\alpha+\beta)$$
are non-zero.

If $c(\beta,\alpha+\gamma)\neq 0$  and $c(\gamma,\alpha+\beta)\neq0$, define
$$\left\{\begin{array}{llll}
f(e_{\alpha},e_{\beta+\gamma})&=&e_{\alpha+\beta+\gamma},\\[3mm]
f(e_{\beta},e_{\alpha+\gamma})
&=&\frac{c(\beta,\gamma)}{c(\alpha,\gamma)}\,
e_{\alpha+\beta+\gamma},
\end{array}\right.$$
$f(e_{\tau},e_{\pi})=0$ for all the other pairs of weights.

Assuming again that $f=d^{1}g$ leads to an inconsistent system of equations for the unknown variables  $\mu_{\alpha},$ $\mu_{\beta}$, $\mu_{\gamma}$ and $\mu_{\alpha+\beta+\gamma}$. Thus $f\notin B^{2}(\mathcal{N},\mathcal{R}_{\mathcal{T}})^{\mathcal{T}}$.

Finally, if $c(\beta,\alpha+\gamma)=0$, then the remaining two structure constants are non-zero, and one defines
$$\left\{\begin{array}{llll}
f(e_{\beta},e_{\alpha+\gamma})&=&e_{\alpha+\beta+\gamma},\\[3mm]
f(e_{\alpha+\beta},e_{\gamma})
&=&\frac{c(\gamma,\alpha)}{c(\alpha,\beta)}
\,e_{\alpha+\beta+\gamma},\\[2mm]
\end{array}\right.$$
$f(e_{\tau},e_{\pi})=0$ for all the other pairs of weights.

The same argument yields $0\neq f\in H^{2}(\mathcal{N},\mathcal{R}_{\mathcal{T}})^{\mathcal{T}}.$ This completes the proof of theorem. \end{proof}

\begin{rem} Note that the condition in Theorem \ref{thm5.1} that $e_{\alpha}\in Center(\cal N)$ can be replaced by $\alpha+\beta \notin W$ for any $\beta\in W.$
\end{rem}

From the proof of Theorem \ref{thm5.1}, we obtain the following corollary.

\begin{cor}
Let $m,n$ and $p$ denote, respectively, the number of pairs satisfying \textbf{Case (i)} and \textbf{Case (ii)} and the number of triples satisfying \textbf{Case (iii)}. Then the dimension of the second cohomology group of $\cal R_{\cal T}$ is bounded below by $m+n+p$.
\end{cor}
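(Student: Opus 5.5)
The plan is to show that the non-trivial classes produced in the proof of Theorem~\ref{thm5.1} are linearly independent in $H^{2}(\mathcal N,\mathcal R_{\mathcal T})^{\mathcal T}$, and then pass to $H^2(\mathcal R_{\mathcal T},\mathcal R_{\mathcal T})$ via Theorem~\ref{Serre}. By Remark~\ref{rm2.2} and the Hochschild--Serre decomposition, $H^{2}(\mathcal N,\mathcal R_{\mathcal T})^{\mathcal T}$ appears as the summand $H^0(\mathcal T,\mathbb C)\otimes H^2(\mathcal N,\mathcal R_{\mathcal T})^{\mathcal T}$ of $H^2(\mathcal R_{\mathcal T},\mathcal R_{\mathcal T})$. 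It therefore suffices to bound $\dim H^{2}(\mathcal N,\mathcal R_{\mathcal T})^{\mathcal T}\ge m+n+p$.

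For every configuration counted in $m$, $n$ or $p$, I take the cocycle $f$ constructed in the corresponding case of Theorem~\ref{thm5.1}. Its target weight is $\Lambda=3\alpha+2\beta$, $4\alpha+2\beta$ (together with $3\alpha+2\beta$ in case (ii)), or $\alpha+\beta+\gamma$, and it is supported on pairs of weights summing to $\Lambda$. Since $\mathcal T$-invariant cochains split as a direct sum over the target weight $\Lambda$, and $d^1$ respects this grading, the space $H^{2}(\mathcal N,\mathcal R_{\mathcal T})^{\mathcal T}$ decomposes as $\bigoplus_\Lambda H^2_\Lambda$. Cocycles attached to distinct target weights are therefore automatically independent once each is non-trivial in its own graded piece.

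Next I handle configurations sharing a target weight. Different primitive data produce different weights: $3\alpha+2\beta$ determines the ordered pair $(\alpha,\beta)$, and $\alpha+\beta+\gamma$ determines the set $\{\alpha,\beta,\gamma\}$, by uniqueness of the expansion in $\Pi$. So distinct case-(i) and case-(iii) configurations live in different graded pieces. The one overlap is a case-(ii) cocycle, whose $4\alpha+2\beta$ part is genuinely new but whose $3\alpha+2\beta$ part may coincide in weight with a case-(i) cocycle for the same pair. Because the case-(ii) cocycle has a non-zero component in weight $4\alpha+2\beta$, I would project onto that graded piece to separate it. Its non-triviality there follows from the inconsistency argument of Theorem~\ref{thm5.1}, restricted to equations involving $\mu_{4\alpha+2\beta}$.

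The main obstacle is this projection step: one must verify that the weight-$4\alpha+2\beta$ component of the case-(ii) cocycle is itself non-exact, rather than only the full cocycle. I would redo the linear system $(f-d^1g)(e_\tau,e_\pi)=0$ for $\tau+\pi=4\alpha+2\beta$ with the single unknown $\mu_{4\alpha+2\beta}$. The values $f(e_\beta,e_{4\alpha+\beta})$ and $f(e_{\alpha+\beta},e_{3\alpha+\beta})$ force two incompatible values of $\mu_{4\alpha+2\beta}$, and the non-vanishing structure constants from the Jacobi identities give the contradiction. Summing dimensions over graded pieces then yields $\dim H^2(\mathcal R_{\mathcal T},\mathcal R_{\mathcal T})\ge m+n+p$.
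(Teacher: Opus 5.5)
There is a genuine gap in the step that carries the whole argument: the claim that $d^1$ (and $d^2$) respect the splitting of $\mathcal T$-invariant cochains by target weight, so that $H^2(\mathcal N,\mathcal R_{\mathcal T})^{\mathcal T}=\bigoplus_\Lambda H^2_\Lambda$. For the adjoint module this is false.

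For $g(e_\tau)=\mu_\tau e_\tau$ one has $d^1g(e_{\tau_1},e_{\tau_2})=(\mu_{\tau_1}+\mu_{\tau_2}-\mu_{\tau_1+\tau_2})\,c(\tau_1,\tau_2)\,e_{\tau_1+\tau_2}$. So a $1$-cochain concentrated on one weight $\tau$ has a coboundary with components at target $\tau$ and at every $\tau+\sigma$ with $c(\tau,\sigma)\neq0$. Likewise $d^2$ contains the terms $[x,f(y,z)]$, which raise the target weight.

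Your own case (ii) cocycle shows this. Its $(3\alpha+2\beta)$-part is not closed: its coboundary at $(e_\alpha,e_\beta,e_{3\alpha+\beta})$ is $[e_\alpha,f(e_\beta,e_{3\alpha+\beta})]=c(\alpha,3\alpha+2\beta)\,e_{4\alpha+2\beta}$, which is nonzero in case (ii) (see below). That is exactly why a $(4\alpha+2\beta)$-part must be attached. Target weight therefore gives only a filtration, not a grading. Three consequences follow. ``Independent because they lie in different graded pieces'' is unjustified. ``Projecting onto the $(4\alpha+2\beta)$-piece'' is not a chain map: the projected cochain is not a cocycle, so its non-exactness means nothing. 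And the equations at target $4\alpha+2\beta$ do not have the single unknown $\mu_{4\alpha+2\beta}$; they also involve $\mu_\alpha,\mu_\beta,\mu_{\alpha+\beta},\mu_{3\alpha+\beta},\mu_{4\alpha+\beta},\mu_{3\alpha+2\beta}$.

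What is missing is a direct independence argument. Suppose $\sum_k a_kf_k=d^1g$. At every weight that is not a target of any $f_k$, $d^1g$ must vanish, giving $\mu_{\tau_1+\tau_2}=\mu_{\tau_1}+\mu_{\tau_2}$ whenever $c(\tau_1,\tau_2)\neq0$.

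The lower weights used in the non-triviality computations of Theorem~\ref{thm5.1} are never targets. These are $p\alpha+\beta$, $\alpha+2\beta$, $2\alpha+2\beta$ in cases (i) and (ii), and the length-two weights in case (iii). Targets have the shapes $3\alpha'+2\beta'$, $4\alpha'+2\beta'$ or $\alpha'+\beta'+\gamma'$ with distinct primitives, and by uniqueness of expansions in $\Pi$ different configurations have different targets.

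The one possible clash, cases (i) and (ii) for the same pair, cannot occur, so the overlap you set out to separate never arises. Since $4\alpha+\beta\in W$, both $c(\alpha,2\alpha+\beta)$ and $c(\alpha,3\alpha+\beta)$ are nonzero. Then $L(\alpha,\alpha+\beta,2\alpha+\beta)=0$ and $L(\alpha,\beta,3\alpha+\beta)=0$ show that $c(\alpha,3\alpha+2\beta)=0$ (in particular, $e_{3\alpha+2\beta}$ central) forces $c(\alpha+\beta,3\alpha+\beta)=c(\beta,4\alpha+\beta)=0$. Then no bracket lands in weight $4\alpha+2\beta$, so $4\alpha+2\beta\notin W$.

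Hence on the pairs entering the proof for $f_k$ all other $f_j$ vanish, and the relation reads $a_kf_k=d^1g$ there. If $a_k\neq0$, then $a_k^{-1}g$ would solve the system shown inconsistent in Theorem~\ref{thm5.1}, so $a_k=0$. With this argument in place of the grading, your Hochschild--Serre reduction completes the proof. The paper itself only says the bound follows ``from the proof of Theorem~\ref{thm5.1}'', tacitly relying on this bookkeeping.
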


Next, we present examples of algebras that satisfy exactly one of the conditions in Theorem~\ref{thm5.1} and an example of a Lie algebra that satisfies neither condition, yet whose second cohomology group is non-vanishing. This latter example shows that the hypotheses of the theorem, while sufficient for the non-vanishing of the second cohomology, are not necessary.

\begin{exa}\label{exa5.4} Consider the $9$-dimensional nilpotent Lie algebra $\cal N_9$ of maximal rank two whose only non-zero structure constants are the following (each equal to 1):
\begin{equation}\label{eq29}\left\{\begin{array}{llllll}
c(\alpha_1,\alpha_2),  \quad c(\alpha_1,\alpha_1+\alpha_2), \quad c(\alpha_1,\alpha_1+2\alpha_2), \quad  c(\alpha_1+\alpha_2,2\alpha_1+\alpha_2),\quad  c(\alpha_1+\alpha_2,\alpha_1+2\alpha_2),\\[3mm]
c(\alpha_1,2\alpha_1+\alpha_2), \quad c(\alpha_1,2\alpha_1+2\alpha_2),\quad c(\alpha_2,\alpha_1+\alpha_2), \quad c(\alpha_2,2\alpha_1+\alpha_2),\quad c(2\alpha_1+2\alpha_2,\alpha_2).\end{array}\right.\end{equation}
From \eqref{eq29} one can see that $\cal N_9$ satisfies only condition (i) of Theorem~\ref{thm5.1}. Consequently, for $11$-dimensional solvable Lie algebra $\mathcal{R}_{\mathcal{T}}$ of $\mathcal{N_9}$ we have $H^2(\mathcal{R}_{\mathcal{T}},\mathcal{R}_{\mathcal{T}}) \neq 0.$
\end{exa}

\begin{exa}
Consider the $10$-dimensional nilpotent Lie algebra $\cal N_{10}$ of maximal rank three whose only non-zero structure constants are the following (each equal to 1):
\begin{equation}\label{eq30}
\left\{\begin{array}{llllllllllllllll}
c(\alpha_1,\alpha_2), &c(\alpha_1,\alpha_1+\alpha_2), & c(\alpha_1,2\alpha_1+\alpha_2),& c(\alpha_1,\alpha_1+2\alpha_2),\\[3mm]
c(\alpha_1,3\alpha_1+\alpha_2),& c(\alpha_1,2\alpha_1+2\alpha_2), &  c(\alpha_1,3\alpha_1+2\alpha_2),& c(\alpha_2,\alpha_1+\alpha_2),\\[3mm]
c(\alpha_2,2\alpha_1+\alpha_2),& c(\alpha_2,4\alpha_1+\alpha_2),& c(\alpha_1+\alpha_2,2\alpha_1+\alpha_2),&c(\alpha_1+\alpha_2,3\alpha_1+\alpha_2).\end{array}\right.\end{equation}
 From \eqref{eq30}, we conclude that $\mathcal{N}_{10}$ does not satisfy condition (i) and (iii), but does satisfy condition (ii) of Theorem \ref{thm5.1}. It follows that the $12$-dimensional solvable Lie algebra $\mathcal{R}_{\mathcal{T}}$ constructed from $\mathcal{N}_{10}$ has non-trivial second cohomology.
\end{exa}

\begin{exa}\label{exa5.5} Consider the $7$-dimensional nilpotent Lie algebra $\cal N_7$ of maximal rank three whose only non-zero structure constants are the following (each equal to 1):
\begin{equation}\label{eq31} \begin{array}{cccccc}
c(\alpha_1,\alpha_2), & c(\alpha_1,\alpha_3), & 
c(\alpha_2,\alpha_3), & c(\alpha_1+\alpha_2, \alpha_3), & c(\alpha_1+\alpha_3,\alpha_2).
\end{array}\end{equation}
From \eqref{eq31}, we conclude that $\mathcal{N}_7$ does not satisfy condition (i) and (ii), but does satisfy condition (iii) of Theorem \ref{thm5.1}. It follows that the $9$-dimensional solvable Lie algebra $\mathcal{R}_{\mathcal{T}}$ constructed from $\mathcal{N}_{7}$ has non-trivial second cohomology.
\end{exa}

\begin{exa}\label{exa5.6} Consider the $12$-dimensional nilpotent Lie algebra $\cal N_{12}$ of maximal rank two whose only non-zero structure constants are the following (those in the first three rows equal to $1$, while in the last row equal to $\frac{1}{2}$)
$$\left\{\begin{array}{llllllllllllllll}
c(\alpha_1,\alpha_2), &c(\alpha_1,4\alpha_1+\alpha_2), & c(\alpha_1,4\alpha_1+2\alpha_2),\quad c(\alpha_1+\alpha_2,2\alpha_1+\alpha_2),\quad c(\alpha_2,3\alpha_1+\alpha_2),\\[3mm]
c(\alpha_1,3\alpha_1+\alpha_2),& c(\alpha_1,4\alpha_1+3\alpha_2), &  c(\alpha_2,3\alpha_1+2\alpha_2),\quad c(2\alpha_1+\alpha_2,3\alpha_1+\alpha_2), \quad  c(2\alpha_1+\alpha_2,\alpha_1),\\[3mm]
c(\alpha_2,4\alpha_1+\alpha_2),& c(\alpha_2,4\alpha_1+2\alpha_2),& c(\alpha_2,5\alpha_1+2\alpha_2),\quad c(2\alpha_1+\alpha_2,5\alpha_1+\alpha_2),\\[3mm]
c(\alpha_1+\alpha_2,\alpha_1),& c(\alpha_1,5\alpha_1+\alpha_2),&  c(3\alpha_1+\alpha_1,\alpha_1+\alpha_2),\quad  c(5\alpha_1+\alpha_2,\alpha_1+\alpha_2).
 \end{array}\right.$$

From the structure of the algebra $\cal N_{12}$ one can conclude that it does not satisfy the conditions (i), (ii) and (iii).
\end{exa}

Based on examples of solvable Lie algebras with non-vanishing second cohomology groups, we propose the following conjecture:

\begin{con}
Let $p$ denotes the number of weights $\Lambda$ for which the difference between the maximal number of pairwise distinct triples and the maximal number of distinct pairs associated with $\Lambda$ greater than $1$. Then
\[
H^{2}(\mathcal{R}_{\mathcal{T}},\mathcal{R}_{\mathcal{T}}) \;\geq\; p.
\]
\end{con}

{\bf Data availability} {{\small{\ Data sharing not applicable to this article as no datasets were generated or analysed during
the current study.}}
\\[3mm]

{\bf Conflict of interest} {{\small{\ On behalf of all authors, the corresponding author states that there is no conflict of interest.}}

\end{document}